\documentclass[12pt]{amsart}

\usepackage[utf8]{inputenc}
\usepackage{url}
\usepackage{hyperref}

\usepackage{graphicx}
\usepackage{amssymb}
\usepackage{float}
\usepackage{fullpage}
\usepackage{comment}
\let\olddiv\div
\usepackage{mathabx}
\let\div\olddiv
\usepackage{mathtools}
\usepackage{color}
\newtheorem{theorem}{Theorem}[section]
\newtheorem{lemma}[theorem]{Lemma}

\newtheorem{proposition}[theorem]{Proposition}
\newtheorem{conjecture}[theorem]{Conjecture}

\theoremstyle{definition}
\newtheorem{definition}[theorem]{Definition}

\theoremstyle{remark}

\numberwithin{equation}{section}

\newcommand{\R}{{\mathbb R}}

\newcommand{\Z}{{\mathbb Z}}
\newcommand{\N}{{\mathbb N}}
\newcommand{\1}{{\mathbf 1}}

\newcommand{\eps}{\varepsilon}

\newcommand{\E}{{\mathcal E}}

\renewcommand{\Re}{\mathop{\mathrm Re}}
\renewcommand{\Im}{\mathop{\mathrm Im}}
\renewcommand{\exp}{\mathrm{exp}}

\title{The exceptional set of the Goldbach problem }
\author[Gautami Bhowmik]{Gautami Bhowmik}
\address{Laboratoire Paul Painlev\'e LABEX-C2EMPI , Universit\'e de Lille, Batiment M2, 59655 Villeneuve-d'Ascq Cedex, France}
\email{gautami.bhowmik@univ-lille.fr}
\author[Lasse Grimmelt]{Lasse Grimmelt}
\address{Department of Pure Mathematics and Mathematical Statistics, University of Cambridge, Cambridge CB3 0WB, UK}
\email{lpg31@cam.ac.uk }
\subjclass[2020]{11P32, 11M26, 11M41}
\keywords{ Goldbach problem, exceptional set, Hardy-Littlewood }

\dedicatory{Dedicated to J\`anos Pintz on his 75th birthday.}
\begin{document}

\begin{abstract} We study the estimates for the number of exceptions to the representation of integers as the sum of at most two prime numbers. Most of this article is a survey that gives an overview of existing results. We begin with the legendary Hardy-Littlewood circle method and show how it paved the way to a power saving by Montgomery-Vaughan in 1975 and Pintz in 2018. We conclude with a new result that is a fully explicit formula for the major arcs. Another new observation is
the non-existence of exceptional zeros under a sparse
 version of the Hardy-Littlewood conjecture. The survey part of this article aims to be accessible to an audience that has not encountered these techniques before. 
\end{abstract}
\maketitle

\section{Introduction}

The Goldbach problem dates back to 1742 and asks if every even integer greater than $2$ can be expressed 
as the sum of two prime numbers. 
This seemingly innocuous looking question remains unanswered almost three centuries later and joins other famous
``simple" unproved statements in additive number theory like the twin prime problem of ascertaining if there exist infinitely many primes $p$ such that $p+2$ is also prime. 
Nevertheless, impressive progress has been made on the binary Goldbach problem, part of which we will give an exposition in this article.  

The Goldbach conjecture 
is  empirically supported by calculations for all numbers  
up to \(4\cdot 10^{18}\) \cite{OHP2014} at which point it seems to hit a computational bottleneck.  
The conjecture is also statistically supported by showing that the set where the conjecture may fail is of density zero,
 in other words ``small". These possible exceptions to the conjecture 
are given by the exceptional set
\begin{align}
    \E(X)=\{2<N\le X, N\in2\N; \ N\neq p_{1}+p_{2}\}
\end{align}
where $p_1$ and $p_2$ are elements of $\mathbb P$ , the set of prime numbers.
The size of the exceptional set thus depends on $X$, which is allowed to become ``large",  and
Goldbach's conjecture is equivalent to the assertion that \(|\E(X)|=0\) for any \(X\). This paper deals with upper bounds of $\E(X)$ and shows how new tools and concepts in analytic number theory
contributed to the evolution of these bounds. To be more precise, we are interested in bounds of the shape $|\E(X)|\leq G(X)$ for some function $G(X)$ that grows slower than $X$. If for example $G(X)=X^{3/4}$, we would get that of the $X/2$ even integers up to $X$, $X/2-X^{3/4}$ can be written as the sum of two primes. Since $\frac{X^{3/4}}{X/2}$ goes to $0$ as $X$ goes to infinity, the number of exceptions is sparse and we say that \emph{almost all} even integers are the sum of two primes. 

One reason why it is interesting to study the exceptional set is that it connects directly to ternary additive problems involving two primes. Roughly speaking, if we take two primes and a third summand from any set with more than $|\E(X)|$ elements, we can hope to represent all integers (up to congruence conditions) in this way. Thus, for example, the standard proof that $|\E(X)|\leq X/(\log X)^2$ can be modified slightly to show that large odd integers are the sum of three primes (since primes are only $1/\log X$ sparse, as we will see in more detail later). This gives the central motivation for this article: we showcase the historic developments that have led to improved estimates for $|\E(X)|$, in particular the substantial contribution of János Pintz. 

The tools that have had success in this problem fall into the area of analytic number theory, and are usually more suited to \emph{counting} the number of solutions, rather than showing simple existence. For this reason, a central object is the Goldbach function
\begin{equation*}
r_2'(N)=\sum_{\substack{p_1+p_2=N \\ p_i \in \mathbb P}} 1
\end{equation*}
that counts the number of representations of $N$ as the sum of two primes. If we can show that it 
is positive for all even $N$, the Goldbach conjecture follows. This is clearly out of reach; but if we can show that it is strictly positive for all integers outside a set of size $G(X)$, we obtain an exceptional set bound as above. Two famous tools are central for this counting: The circle-method and study of the Riemann-zeta function.

\subsection{Outline}
About a hundred years ago Hardy and Littlewood \cite{HL23} conjectured that $r_2'(N)$ is asymptotically equivalent to the product of $\frac{N}{(\log N)^2}$ and a so-called singular series. In section 2 we state this conjecture and connect it to the Riemann zeta function and related $L$-functions. 

Since these functions are not yet understood completely, we may need hypotheses on their zeros to estimate the exceptional set.
Perhaps  the
most widely known of such conjectures is the  Generalized Riemann Hypothesis (GRH). 
If $\chi$ is a Dirichlet character modulo $q$  and $L(s, \chi)$ 
the associated Dirichlet $L$-function, the real parts of its  non-trivial 
zeros are expected, according to the GRH, to lie on the line $\Re (s)=\frac{1}{2} $.
Under this hypothesis, the upper bound  of $|\E(X)|\leq X^{1/2+\eps}$ for any $\eps>0$ was obtained for the exceptional set by Hardy and Littlewood. In Section 4 we explain how the circle method helped Hardy-Littlewood obtain such a bound. Our proofs are not detailed but trace the main ideas. A reader interested in more details about the circle method could consult \cite{Vau1997}. Notice that it was only in 1992 that Goldston \cite{Goldston1992} could replace the $\eps$ above by a logarithm. 

Just after 1937 when  Vinogradov's estimates  became available, Van der Corput \cite{vdC1937} Chudakov \cite{C1937} and Estermann  \cite{E1938}
 independently obtained the first unconditional estimate showing that  $\E(X)$ grows strictly slower than $X$. We explain Vinogradov's method and a key result of Siegel required for its application in Section 4.

In 1975 Montgomery and Vaughan \cite{MV1975} reduced the power of $X$ by using an effective form of Gallagher's work on the distribution of zeros of $L$-functions and showed that there exists a positive effectively computable constant $\delta>0$ such that $\E(X)$ is bounded by $  X^{1-\delta} $ for all large $X$. This is treated in Section 5.

Pintz's work \cite{P2018},\cite{P2023} continues on the lines of Montgomery-Vaughan and obtains an explicit $\delta=0.28$. Section 6 
explains this development. Pintz's key ingredient is a refinement of the contribution of zeros of $L$-functions (see section 4). We will see his approach in Section 6 of this paper.

On our part, we obtain a formula that is completely explicit and generalizes Pintz's result in Section 7. The main idea is to use a smooth form for the major arcs.

The existence of any non-trivial zero outside the conjectured line  would  serve as  a counterexample to
the GRH. One such eventual bad zero, now called the Siegel zero,
would be a real one associated to a unique primitive 
quadratic Dirichlet character and its eventual presence or likely absence plays a significant role in the study of the exceptional set. In Section 8, we obtain a new result that relates Goldbach representations to the non-existence of Siegel-zeros.

\subsection{Notation}
The following will often be used for comparing growths of positive valued functions $f$ and $g$ as $x$ goes to infinity and with $C$ being a constant.
\begin{align*}
  f(x)  \sim g(x)\ &\text {if}\quad \lim_{x\rightarrow \infty}\frac{f(x)}{g(x)}=1,\\
  f(x)=o(g(x))\ &\text{if}\quad \forall C>0, \exists x_0 : |f(x)|\le Cg(x)\  \forall\ x\ge x_0,\\
  f(x)=O(g(x))\ &\text{if}\quad \exists C>0: |f(x)|\le Cg(x)\ \forall\ x.\\
\end{align*}
The symbol $\ll$ may be used instead of the big-oh above and all occurrences of constants  are to be understood with an existence quantor before the statement, in particular they may differ in each occurrence.

\section{Heuristics}

\subsection{Heuristics for binary Goldbach}

To estimate the binary Goldbach representation function, one first needs a model for how often an integer \(n\leq X\) is prime. The most basic heuristic, suggested by the Prime Number Theorem,
\begin{align*}
    \pi(X):=\sum_{p\leq X}1 \sim \frac{X}{\log X},
\end{align*}
is that for integer below \(X\), being prime is an independent random event with probability about \(1/\log X\). A slightly more refined form is
\begin{align*}
    \pi(X)=\sum_{n\leq X}\frac{1}{\log n}\bigl(1+o(1)\bigr),
\end{align*}
which may be interpreted heuristically as saying that
\[
\1_{n\text{ is prime}} \approx \frac{1}{\log n}.
\]
In other words, $n$ being prime is replaced in the problem of counting primes up to $X$ by a independent random event of probability  $\frac{1}{\log n}$. Thus, we can think of the simple function $\frac{1}{\log n}$ as our first model for the primes. At a first glance, this seems like an absurd oversimplification when our goal is to understand Goldbach type problems, and indeed we will see that it is, but nevertheless it is a good first step. 

This inverse logarithmic density makes it convenient to replace the prime indicator by
\[
\1_{n\text{ is prime}} \log n,
\]
which (up to a contribution of higher prime powers that we ignore in this survey) is the von-Mangoldt function  $\Lambda(n)$. The logarithmic weight compensates the sparsity precisely and we end up with the simplest model expectation
\[
\Lambda(n)\approx 1.
\]

We include the same weight in the count for Goldbach representations. Thus we modify $r'_2(N)$ and write
\begin{align*}
    r_2(N):=\sum_{n_1+n_2=N}\Lambda(n_1)\Lambda(n_2).
\end{align*}
The model \(\Lambda(n)\approx 1\) leads to the first naive approximation
\begin{align}\label{eq:heuristicI}
    r_2(N)\approx \sum_{n_1+n_2=N}1 \sim N.
\end{align}
Equivalently, one may expect
\[
r_2'(N)\approx \sum_{n_1+n_2=N}\frac{1}{(\log n_1)(\log n_2)}
\sim \frac{N}{(\log N)^2}.
\]

However, \eqref{eq:heuristicI} cannot be correct as stated. Apart from \(2\), all primes are odd, so an odd integer \(N\) can be written as a sum of two primes only if one of the primes is \(2\). Thus for odd \(N\) one has
\begin{align}\label{eq:ODD}
    r_2'(N)=
    \begin{cases}
        2,&\text{if }N=p+2\text{ for some prime }p,\\
        0,&\text{otherwise.}
    \end{cases}
\end{align}
This is plainly incompatible with the heuristic above. The point is that the model
\[
   \Lambda(n)\approx 1
\]
works reasonably well for counting primes on average, but is too crude for additive questions such as Goldbach's problem.

We therefore try to incorporate local congruence information. The first correction is parity: primes greater than \(2\) are odd, so for the weighted indicator \(\Lambda(n)\) it is natural to replace the crude model \(1\) by
\begin{align*}
    \Lambda_{\mathrm{heur}}(n)\approx
    \begin{cases}
        2,&\text{if }n\text{ is odd},\\[1ex]
        0,&\text{if }n\text{ is even},
    \end{cases}
\end{align*}
which preserves the correct average order. This can be written more compactly as
\begin{align}\label{eq:q=2mod}
    1+(-1)^{n+1}.
\end{align}

There is no reason to stop at the modulus \(2\). For any small prime \(q\leq R\), all other primes are not divisible by \(q\), so a more realistic model should vanish on multiples of \(q\). There being no apparent reason for any bias, we may expect that the remaining mass is distributed equally among the \(q-1\) reduced residue classes modulo \(q\). This suggests a correction of the form
\begin{align}\label{eq:modelqprime}
    1+\sum_{\substack{q\leq R\\ q \text{ prime}}} f_q(n),
\end{align}
where, generalising $(-1)^{n+1}$ in \eqref{eq:q=2mod},
\begin{align*}
    f_q(n)=
    \begin{cases}
        -1,&\text{if } q\mid n,\\[1ex]
        \dfrac{1}{q-1},&\text{if } q\nmid n.
    \end{cases}
\end{align*}
Note that \(f_q\) is periodic modulo \(q\) and has mean value \(0\):
\begin{align*}
    \sum_{a=1}^{q} f_q(a)=0.
\end{align*}

There is, however, an immediate problem with \eqref{eq:modelqprime}. If \(n\) is divisible by both \(2\) and \(3\), then \(f_2(n)=f_3(n)=-1\), so the model predicts a negative value. This shows that the local corrections should not be added independently, we need to compensate for the fact that we have twice taken out multiples of two primes. More generally, we need to continue by inclusion--exclusion based on number of small prime factors, which is nothing else than continuing this process from primes to products of two primes, products of three primes and so on. The idea of inclusion--exclusion is crucial to another fundamental tool of modern analytic number theory: Sieves, which for example played a major role in the breakthroughs on bounded gaps between primes \cite{zhang}, \cite{maynard}. 

The right language to formalise inclusion--exclusion in our context is provided by Ramanujan sums given by
\begin{align*}
    c_q(n):=\sum_{\substack{a \bmod q\\ (a,q)=1}} e\!\left(\frac{an}{q}\right),
    \qquad e(x):=e^{2\pi i x}.
\end{align*}
We will encounter these sums again and again throughout our Goldbach journey. When \(q\) is prime, one has
\[
c_q(n)=
\begin{cases}
q-1,&\text{if }q\mid n,\\
-1,&\text{if }q\nmid n,
\end{cases}
\]
and hence
\begin{align}
    \label{eq:ramsum}-\frac{c_q(n)}{q-1}=
\begin{cases}
-1,&\text{if }q\mid n,\\[1ex]
\dfrac{1}{q-1},&\text{if }q\nmid n,
\end{cases}
=f_q(n).
\end{align}

Thus the functions \(f_q\) are for primes $q$ precisely the basic Ramanujan-sum corrections, after dividing by $q-1$ which corresponds to the number of residue classes not divisible by $q$. To complete the model we need to generalise this count of non-divisible residue classes, as well as a function that alternates the sign depending on the number of prime factors. These are standard and given respectively by Euler's totient function
\begin{align*}
    \varphi(q)=q\prod_{p|q}(1-1/p).
\end{align*}
and the Möbius function $\mu(q)$, where \(\mu(q)=0\) if \(p^2\mid q\) for some prime \(p\), and otherwise
\begin{align*}
    \mu(q)=(-1)^{\#\{p:p\mid q\}}.
\end{align*}
We then can define our additive model as (see \cite{heath-brown} for one of the early occurences of this function)
\begin{align}\label{eq:FRmodel}
    \Lambda_R(n):=\sum_{q\leq R}\frac{\mu(q)c_q(n)}{\varphi(q)}.
\end{align}

We remark that since both \(\mu(q)\) and \(c_q(n)\) are multiplicative in \(q\), the sum in \eqref{eq:FRmodel} is the truncated version of the Euler product
\begin{align}\label{eq:cramer}
    \prod_{p\leq R}\left(1-\frac{c_p(n)}{p-1}\right),
\end{align}
which is, by \eqref{eq:ramsum}, a weighted indicator of \(n\) having no small prime divisor. One can, however, not replace the sum by the product in general.

From this model, it is clear that the expected size of \(r_2(N)\) should depend on the residue class of \(N\) modulo small primes. Indeed, a simple calculation with Ramanujan sums shows that
\begin{align}\label{eq:truncated-singular-series}
    \sum_{n_1+n_2=N} \Lambda_R(n_1)\Lambda_R(n_2)
    =N\,\mathfrak{S}_R(N)\,(1+o(1)),
\end{align}
where
\begin{align}\label{eq:truncated-SS}
     \mathfrak{S}_R(N)=\sum_{q\le R}
     \frac{\mu(q)^2 c_q(N)}{\varphi(q)^2}.
\end{align}
Here, in contrast to \eqref{eq:cramer}, we can add terms $q>R$ with neglibile error. The infinite sum can then be rigorously written in product form, and we obtain the singular series (we will motivate this terminology in the next section)
\begin{align}\label{eq:SS}
    \mathfrak{S}(N)=\prod_p \left(1+\frac{c_p(N)}{(p-1)^2}\right)=\prod_{p\nmid N}\left(1-\frac{1}{(p-1)^2}\right)
    \prod_{p\mid N}\left(1+\frac{1}{p-1}\right).
\end{align}

For odd \(N\), the factor at \(p=2\) vanishes, reflecting the fact that there should be no main term in that case. For even \(N\), the factor at \(p=2\) equals \(2\), and \eqref{eq:SS} becomes
\[
\mathfrak{S}(N)
=
2\prod_{\substack{3\leq p\\ p\nmid N}}
\left(1-\frac{1}{(p-1)^2}\right)
\prod_{\substack{3\leq p\\ p\mid N}}
\left(1+\frac{1}{p-1}\right),
\]
which is bounded away from $0$, thus giving us many expected representations. Hardy--Littlewood used the singular series to conjecture an asymptotic for the number of Goldbach representations. 

\begin{conjecture}[Hardy--Littlewood] \label{conj:HL}
For even integers \(N\to\infty\), one has
\begin{align*}
    r_2(N)\sim \mathfrak{S}(N)N.
\end{align*}
Equivalently,
\begin{align*}
    r_2'(N)\sim \mathfrak{S}(N)\frac{N}{(\log N)^2}.
\end{align*}
\end{conjecture}
In particular the truth of this conjecture would imply that as $X\to \infty$, the exceptional set $\mathcal{E}(X)$ stays finite. 

\subsection{Counting primes in arithmetic progressions}

As we have seen, the model \(\Lambda_R(n)\) rests on two assumptions about the distribution of primes:
\begin{enumerate}
    \item the density of primes around \(n\) is \(1/\log n\);
    \item primes are equally distributed in admissible residue classes.
\end{enumerate}
Both assertions can be approached with the theory of $L$-functions. The big breakthrough came from Riemann, who in his 1859 memoir for the first time considered the function
\begin{align*}
    \zeta(s)=\sum_{n=1}^\infty n^{-s}\qquad (\Re(s)>1),
\end{align*}
now named after him, for a complex variable $s$ and used analytic continuation to extend it beyond the range $\Re(s)>1$. Its  zeros in the complex plane govern the distribution of primes, and one can prove an explicit formula, convergent in a restrictive sense, of the shape
\begin{align*}\label{eq:Lambdaexplicit}
    \sum_{n\leq X}\Lambda(n)
    =
    X
    -
    \sum_{\rho}\int_2^X t^{\rho-1}\,dt
    + O(\text{lower-order terms}),
\end{align*}
where \(\rho\) runs over the non-trivial  zeros of \(\zeta(s)\) in the critical strip \(0\leq \Re(\rho)\leq 1\). 
Thus, recalling our logarithmic normalisation, the main term $X$ comes from the pole of $\zeta$ at $s=1$ and corresponds to the expected local density of primes \(1/\log n\), 
while the  zeros contribute oscillating correction terms. We can think about this in terms of a refined model 
\begin{align*}
    \Lambda(n)\approx 1 - \sum_{\rho} n^{\rho-1}.
\end{align*}
For simplicity, let us only look at the effect of a single zero \(\rho=\beta+i\gamma\), where \(\beta,\gamma\in\mathbb R\).  One has
\begin{align*}
    n^{\rho-1}=n^{\beta-1+i\gamma}=n^{\beta-1} e^{i\gamma\log n}.
\end{align*}
Thus, the zero corrupts the model $\Lambda(n)\approx 1$ by a term whose size depends on $\Re(\rho)$ (since $|n^{\beta-1} e^{i\gamma\log n}|=n^{\beta-1}$ ) and that oscillates with a frequency depending on the imaginary part\footnote{Non-real oscillations cancel in conjugate pairs, since if \(\beta+i\gamma\) is a zero, then so is \(\beta-i\gamma\).}. Consequently, if $\beta$ is close to $1$, such a zero would increase the number of primes in certain ranges, while decreasing it in others. 

This explains the importance of the real part of the zeros: if it is small, then the additional oscillating terms have small absolute value. It follows that the assumed density (1) is closely related to the fact that \(\zeta(s)\) has no zeros too close to the line \(\Re(s)=1\), so that no additional term is as large as the main term. The Prime Number Theorem itself is equivalent to the absence of zeros on the line \(\Re(s)=1\), and this was first proved independently by Hadamard and de la Vall\'ee Poussin.

One can show that if $\rho$ is a zero, so is $1-\rho$, thus the best we can hope for is that $\beta=1/2$, which is precisely what the famous Riemann Hypothesis asserts. Together with standard information on the number of zeros, it implies that
\begin{align*}
    \sum_{\rho}\int_2^X t^{\rho-1}\,dt
    = O(X^{1/2+\varepsilon})
\end{align*}
for every \(\varepsilon>0\). We are still very far from proving this.

The underlying reason why \(\zeta(s)\) is connected to primes is that \(n^{-s}\) is multiplicative:
\begin{align*}
    (ab)^{-s}=a^{-s}b^{-s}.
\end{align*}
To study assumption (2), we need multiplicative functions that also encode congruence conditions. A Dirichlet character \(\chi \pmod q\) is a completely multiplicative, \(q\)-periodic function
\[
\chi:\mathbb Z\to \mathbb C
\]
such that \(\chi(n)=0\) when \((n,q)>1\) and \(\chi(1)=1\). The principal character \(\chi_0 \pmod q\) is defined by
\[
\chi_0(n)=\chi_0^{(q)} (n)=
\begin{cases}
1,& (n,q)=1,\\
0,& (n,q)>1.
\end{cases}
\]
The orthogonality relation says that for \(a,b\in\mathbb Z\),
\begin{equation}\label{eq:orth1}
\frac{1}{\varphi(q)}\sum_{\chi \,(\mathrm{mod}\, q)} \chi(a)\overline{\chi(b)}
=
\begin{cases}
1,& a\equiv b \pmod q \text{ and } (a,q)=1,\\
0,& \text{otherwise}.
\end{cases}
\end{equation}
Thus characters allow us to isolate a residue class \(a \pmod q\).

Replacing \(\zeta\) by the Dirichlet \(L\)-function
\begin{align*}
    L(s,\chi)=\sum_{n=1}^\infty \chi(n)n^{-s}\qquad (\Re(s)>1),
\end{align*}
with the help of \eqref{eq:orth1}, one obtains the corresponding explicit formula for primes in arithmetic progressions. 

\begin{align}\label{eq:Lambdaexplicit}
    \sum_{\substack{n\leq X\\ n\equiv a \bmod q}}\Lambda(n)
    =
    \frac{X}{\varphi(q)}
    -
    \frac{1}{\varphi(q)}
    \sum_{\chi \,(\mathrm{mod}\, q)} \overline{\chi(a)}
    \sum_{\rho_\chi}\int_2^X t^{\rho_\chi-1}\,dt
    + O(\text{lower-order terms}),
\end{align}
where \(\rho_\chi\) runs over the non-trivial zeros of \(L(s,\chi)\), with $(a,q)=1$.. 

This has exactly the same interpretation as for \(\zeta(s)\). The main term
\[
\frac{X}{\varphi(q)}
\]
comes from the poles of $L(s,\chi_0)$ at $s=1$ and corresponds to the expected local density of primes in each admissible residue class modulo \(q\), while the  of the various \(L(s,\chi)\) measure the failure of perfect equidistribution. In particular, if one zero \(\rho_\chi=\beta+i\gamma\) lies very close to the line with real part \(1\), then it creates a large oscillating bias in the distribution of primes in residue classes modulo \(q\).

Since the expectation in Conjecture \ref{conj:HL} was based on (1) and (2), a zero \(\rho\) of some \(L(s,\chi)\) with real part close to \(1\) is \emph{bad}, since it changes the expected local density in the model (after normalising by $\varphi(q)$) 
by adding  terms of the shape
\begin{align}\label{eq:correctionterms}
    \overline{\chi(n)}n^{\rho-1}
\end{align}
thereby introducing both periodic fluctuations in the residue class \(\pmod q\) through \(\overline{\chi(n)}\) and oscillations depending on the size of \(n\) through the imaginary part of \(\rho\). In particular, such bad  are incompatible with the model $\Lambda_R$ and with it the Hardy--Littlewood asymptotics. Just as in the case of $\zeta$, we may again expect that all  of $L(s,\chi)$ have real parts $1/2$, or at least have all real parts $<\Theta$ for some value $\Theta$ clearly below $1$. The proof of the existence of any $\Theta<1$ would be a major breakthrough in analytic number theory. In the next section we will assume that $\Theta<3/4$ and show afterwards what can be done to unconditionally deal with potential bad zeros.

\section{Hardy--Littlewood}

In this section we explain how the Hardy--Littlewood method leads to a conditional estimate for the exceptional set in Goldbach's problem. More precisely, we study the weighted binary representation function, introduced above,
\[
r_2(N)=\sum_{n_1+n_2=N}\Lambda(n_1)\Lambda(n_2),
\]
and show that, under a suitable zero-free hypothesis for Dirichlet \(L\)-functions, it is greater than $N \mathfrak{S}(N)/2$ most of the times. Since this gives many representations for $N$ as the sum of two primes, it provides a bound for  \(\E(X)\).

\begin{theorem}[Hardy--Littlewood, conditional exceptional-set bound]\label{thm:HL}
Assume that every zero of every Dirichlet \(L\)-function has real part at most \(\Theta<3/4\). Then for every \(\varepsilon>0\) we have
\begin{align*}
    |\mathcal{E}(X)|\ll_\varepsilon X^{2\Theta-\frac12+\varepsilon}.
\end{align*}
\end{theorem}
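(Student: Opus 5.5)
We will run the circle method with the exponential sum $S(\alpha)=\sum_{n\le X}\Lambda(n)e(n\alpha)$, so that $r_2(N)=\int_0^1 S(\alpha)^2e(-N\alpha)\,d\alpha$ for $N\le X$. Fix a parameter $P=X^{\eta}$ and define the major arcs $\mathfrak M$ as the union of intervals $|\alpha-a/q|\le P/(qX)$ with $q\le P$, $(a,q)=1$; the minor arcs are $\mathfrak m=[0,1]\setminus\mathfrak M$. Writing $r_2(N)=R_{\mathfrak M}(N)+R_{\mathfrak m}(N)$, the plan is to show three things. First, $R_{\mathfrak M}(N)=N\mathfrak S(N)+O(\text{small})$ for all but few $N$. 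Second, $R_{\mathfrak m}(N)$ is small in mean square. Third, we combine these by Chebyshev's inequality, noting that $\mathfrak S(N)\gg1$ for even $N$.

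\textbf{Major arcs.} For $\alpha=a/q+\beta$ we split $n$ into residue classes mod $q$ and expand with characters via \eqref{eq:orth1}. This gives
\[
S(a/q+\beta)=\frac{\mu(q)}{\varphi(q)}T(\beta)+\frac{1}{\varphi(q)}\sum_{\chi\bmod q}\tau(\overline\chi)\chi(a)\,W(\chi,\beta)+O(\log^2 X),
\]
where $T(\beta)=\sum_{n\le X}e(n\beta)$ and $W(\chi,\beta)=\sum_{n\le X}(\Lambda(n)\chi(n)-\delta_\chi)e(n\beta)$. By partial summation and the explicit formula \eqref{eq:Lambdaexplicit}, the hypothesis that all zeros satisfy $\Re\rho\le\Theta$ yields $\sum_{n\le x}\Lambda(n)\chi(n)-\delta_\chi x\ll x^{\Theta}(\log qx)^2$. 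Hence on the arc we get $W(\chi,\beta)\ll (1+|\beta|X)X^{\Theta+\varepsilon}\ll PX^{\Theta+\varepsilon}/q$, and using $|\tau(\overline\chi)|\le\sqrt q$,
\[
S(\alpha)=\frac{\mu(q)}{\varphi(q)}T(\beta)+O\bigl(q^{1/2}PX^{\Theta+\varepsilon}\bigr)=\frac{\mu(q)}{\varphi(q)}T(\beta)+O\bigl(P^{3/2}X^{\Theta+\varepsilon}\bigr).
\]
Squaring and integrating over $\mathfrak M$, whose measure is $\ll P^2/X$, the main term gives the truncated singular series times $\int T(\beta)^2e(-N\beta)\,d\beta$. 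Completing both the sum over $q$ and the $\beta$-integral costs $O(X^{1+\varepsilon}/P)$ for each $N$. The cross and error terms cost $O\bigl(\int_{\mathfrak M}|T|/\varphi(q)\cdot P^{3/2}X^{\Theta+\varepsilon}\bigr)+O(P^5X^{2\Theta-1+\varepsilon})$. Provided $P$ is a small power of $X$, these are $o(X)$ because $\Theta<3/4$ (even $\Theta<1$ suffices here). So $R_{\mathfrak M}(N)=N\mathfrak S(N)+O(X^{1-\delta})$ uniformly.

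\textbf{Minor arcs, the main obstacle.} We need a mean-square bound
\[
\sum_{N\le X}|R_{\mathfrak m}(N)|^2\le\int_{\mathfrak m}|S(\alpha)|^4\,d\alpha\le\sup_{\mathfrak m}|S|^2\int_0^1|S|^2\ll X\log X\,\sup_{\mathfrak m}|S(\alpha)|^2,
\]
where the first inequality is Bessel/Parseval. The crux is a pointwise bound for $S$ on $\mathfrak m$ derived from the hypothesis alone, which is Hardy--Littlewood's original route (without Vinogradov). By Dirichlet's theorem every $\alpha$ has an approximation $|\alpha-a/q|\le 1/(qQ)$ with $q\le Q=X/P$, and on $\mathfrak m$ we must have $q>P$ or $|\beta|>P/(qX)$. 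Rather than a sup bound, I would bound the full fourth moment over the Farey dissection directly. Using the character expansion above on every Farey arc and the hypothesis, one gets $|S(\alpha)|\ll X/(\sqrt q\,(1+X|\beta|))+q^{1/2}X^{\Theta+\varepsilon}(1+|\beta|X)^{1/2}$ after a more careful Gallagher-type large-$|\beta|$ estimate. Inserting this into $\int|S|^2$ weighted by $\sup|S|^2$ and choosing $Q\approx X^{1/2}$, I expect to obtain
\[
\sum_{N\le X}|R_{\mathfrak m}(N)|^2\ll X^{1+2\Theta+\varepsilon}+X^{3+\varepsilon}/P.
\]
Optimising the dissection, that is taking the trivial term to be absorbed, should leave $X^{2\Theta+1+\varepsilon}$. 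This step is where I expect the delicate bookkeeping, including the restriction $\Theta<3/4$ to ensure the resulting exponent is below $1$.

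\textbf{Conclusion.} Any even $N\in(X/2,X]$ in $\mathcal E$ has $r_2(N)\ll X^{1/2}\log^2X$, counting prime-power contributions. It therefore satisfies $|R_{\mathfrak m}(N)|\ge N\mathfrak S(N)/2\gg X$. Chebyshev's inequality then gives
\[
\#\{N\}\ll X^{-2}\sum|R_{\mathfrak m}(N)|^2\ll X^{2\Theta-1+\varepsilon}\cdot X^{1/2}\cdots,
\]
and I would tune $P,Q$ so that the total is $X^{2\Theta-1/2+\varepsilon}$. Summing over dyadic ranges $X/2^k$ finishes the proof.
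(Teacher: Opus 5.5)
There is a genuine gap in the minor-arc step. As you have set it up, it cannot produce $X^{2\Theta-1/2+\varepsilon}$, and the exponents in your last two paragraphs are aimed at the target rather than derived from anything.

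The problem is that on $\mathfrak m$ you treat all of $S$ as error, but $S$ still carries its main term there. Near $a/q$ that term is $\frac{\mu(q)}{\varphi(q)}T(\beta)$, of size $X/(\varphi(q)(1+X|\beta|))$, not $X/(\sqrt q\,(1+X|\beta|))$.

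\begin{itemize}
\item Take squarefree $q\in(P,2P]$ and $|\beta|\le 1/(8X)$. Then this term is $\gg X/\varphi(q)$, and it dominates your error $q^{1/2}X^{\Theta+\varepsilon}$ throughout the range of $P$ your major-arc analysis allows.
\item These points lie in $\mathfrak m$, so $\int_{\mathfrak m}|S|^4\gg X^3\sum_{P<q\le 2P}\mu(q)^2\varphi(q)^{-3}\gg X^3/P^2$.
\item Chebyshev applied to this fourth moment therefore never gives fewer than about $X/P^2$ exceptions. That is at most $X^{2\Theta-1/2}$ only if $P\ge X^{3/4-\Theta}$.
\item At $\Theta=1/2$ this needs $P\ge X^{1/4}$. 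Your major-arc error $P^{5/2}X^{\Theta+\varepsilon}=o(X)$ forces $P\le X^{1/5}$. With your $q^{-1/2}$ you would even need $P\ge X^{1/2}$.
\end{itemize}

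So ``absorbing the trivial term'' is impossible when $P$ is a small power of $X$.

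Your other expected term is also too optimistic. The bound $q^{1/2}X^{\Theta+\varepsilon}(1+X|\beta|)^{1/2}$ on Farey arcs of order $X^{1/2}$ is only $\ll X^{\Theta+1/4+\varepsilon}$ uniformly. So the sup-times-$L^2$ argument gives $X^{2\Theta+3/2+\varepsilon}$, not $X^{1+2\Theta+\varepsilon}$. It is exactly this $X^{2\Theta+3/2}$ that yields the theorem after dividing by $X^2$.

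The missing idea, which is the paper's route following Hardy--Littlewood, is to keep the main term on every arc.
\begin{itemize}
\item Take the Farey dissection of order $\asymp X^{1/2}$ of all of $[0,1)$, with no minor arcs.
\item On each arc write $S=M+E$ with $M=\frac{\mu(q)}{\varphi(q)}T(\beta)$ and $E\ll X^{\Theta+1/4+\varepsilon}$ by the hypothesis.
\item The $M^2$ part, summed over all arcs, gives $N\mathfrak S(N)+O(N^{1/2+\varepsilon})$ for each $N$.
\item The remainder $S^2-M^2=E(S+M)$ is handled only in mean square. By Parseval, $\sum_{N\le X}|r_2(N)-N\mathfrak S(N)|^2\ll \sup|E|^2\int_0^1(|S|^2+|M|^2)\,d\alpha\ll X^{2\Theta+3/2+\varepsilon}$.
\item Chebyshev then gives $X^{2\Theta-1/2+\varepsilon}$ exceptions.
\end{itemize}
The paper phrases this with $f(x)=\sum_n\Lambda(n)x^n$ on $|x|=e^{-1/N}$, which makes no difference.

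Your major/minor split can be rescued, but only with three changes:
\begin{itemize}
\item choose $P\approx X^{3/4-\Theta+\varepsilon}$ instead of a small power;
\item on $\mathfrak M$, integrate $|E|$ against $|T(\beta)|/\varphi(q)$ rather than using its supremum, so the errors there are $\ll P^{3/2}X^{\Theta+\varepsilon}+P^3X^{2\Theta-1+\varepsilon}=o(X)$ for this $P$;
\item on the minor arcs use $\sup_{\mathfrak m}|S|\ll X^{1+\varepsilon}/P+X^{\Theta+1/4+\varepsilon}$.
\end{itemize}
At that point it is the same argument, with the small-$q$ arcs treated pointwise.
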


\subsection{(Not) going in circles}

The basic idea of the \emph{circle method} is to rewrite the indicator of the condition \(n=0\) in an analytically useful way. A toy approach is the trivial observation that
\begin{align}\label{eq:circlemethod0}
    \1_{\{n=0\}}=\1_{\{n\equiv 0\bmod q\}}\1_{\{|n|<q\}}.
\end{align}
Thus one detects vanishing in the integers by combining a divisibility condition and a size condition. The circle method may be viewed as a refinement of \eqref{eq:circlemethod0}, in which one uses many congruence conditions and many size conditions simultaneously.

Historically the method starts from the residue theorem. For any integer \(k\) and any radius \(R>0\), we can evaluate the complex integral
\begin{align*}
   \frac{1}{2\pi i}\int_{|x|=R}x^{k-1}\,dx
   =
   \begin{cases}
       1,&\text{if }k=0,\\
       0,&\text{otherwise.}
   \end{cases}
\end{align*}
Writing $k=N-n_1-n_2$, this gives a different way of encoding the constraint \(n_1+n_2=N\).

We now give an overview of the proof of Theorem \ref{thm:HL}, following closely the original ideas in \cite{HL23}. Hardy--Littlewood set
\begin{align*}
    f(x):=\sum_{n}\Lambda(n)x^n.
\end{align*}
Then, for \(|x|<1\) to ensure convergence,
\begin{align*}
    f(x)^2=\sum_{N\geq 1} r_2(N)x^N,
\end{align*}
and hence for $0<R<1$ 
\begin{align}\label{eq:r2-circle}
    r_2(N)=\frac{1}{2\pi i}\int_{|x|=R}\frac{f(x)^2}{x^{N+1}}\,dx.
\end{align}

Next, they choose
\[
R=e^{-1/N},
\]
and divide the circle \(|x|=R\) into Farey arcs of order
\[
Q=\lfloor \sqrt{N}\rfloor.
\]
Recall that the Farey sequence of order \(Q\) is the set of all reduced fractions \(a/q\) with the condition \(0\leq a\leq q\leq Q\) and \((a,q)=1\), arranged in increasing order. A key property is that neighbouring fractions are well separated: if
\[
\frac{a'}{q'}<\frac{a}{q}<\frac{a''}{q''}
\]
are consecutive elements, then one has 
\(aq'-a'q=a''q-aq''=1\) and \(q+q''>Q\), and in particular
\[
\frac{1}{q(q+q'')} < \frac{1}{qq'}= \frac{a}{q}-\frac{a'}{q'}.
\]
This spacing property makes Farey sequences a convenient tool to organise rational approximations to a real number, and underlies the decomposition of the circle into arcs centered at fractions \(a/q\) with controlled denominators. Then, if
\[
\frac{a'}{q'}<\frac{a}{q}<\frac{a''}{q''}
\]
are consecutive terms in the Farey sequence of order \(Q\), the Farey arc around \(a/q\) is the interval
\[
\left(\frac{a}{q}-\frac{1}{q(q+q')},\,
      \frac{a}{q}+\frac{1}{q(q+q'')}\right)=\left(\frac{a'+a}{q'+q}, \frac{a+a''}{q+q''}\right).
\]
This interval contains and is contained in the symmetric intervals around
\(a/q\) of radius  \(1/2qQ\) and  \(1/qQ\) respectively.

We now decompose the circle according to which Farey arc the argument of \(x\) belongs to. On the arc around \(a/q\) we write
\begin{align}\label{eq:x-param}
    x=e(a/q)e^{-Y},
\end{align}
where
\begin{align*}
    Y=\eta-2\pi i\beta,\qquad \eta=\frac{1}{N},
\end{align*}
and \(\beta\) ranges over the above mentioned interval of length \(\asymp 1/(qQ)\). A recurrent theme will be the following: the denominator $q$ is related to the $q$ divisibility in the \eqref{eq:circlemethod0} and the $\beta$ integral is related to the size condition.

We note that, since only numerators coprime to the denominator appear in the Farey sequence, the sum over the centre points of all arcs with a fixed denominator \(q\) produces the Ramanujan sum \(c_q(n)\). This is the same object that arose in the heuristic discussion of local congruence obstructions.

For the sketch of the proof, we consider the function $f$ only at the centres of the arcs, that is, at \(\beta=0\). Then we have to evaluate
\begin{align*}
    f\left( e(a/q)e^{-1/N}\right )=\sum_n \Lambda(n) e(an/q)e^{-n/N}.
\end{align*}
Since \(e(an/q)\) is \(q\)-periodic in \(n\), we sort primes into residue classes modulo \(q\). Using the orthogonality of Dirichlet characters, this may done by writing
\begin{align}\label{eq:char-expand}
    e(an/q)
    =
    \frac{1}{\varphi(q)}
    \sum_{\chi \,(\mathrm{mod}\, q)}
    \tau(\bar\chi)\chi(a)\chi(n),
\end{align}
where
\[
\tau(\chi):=\sum_{m \,(\mathrm{mod}\, q)} \chi(m)e(m/q)
\]
is the Gauss sum. Hence
\begin{align*}
    f(e(a/q)e^{-1/N})
    =
    \frac{1}{\varphi(q)}
    \sum_{\chi \,(\mathrm{mod}\, q)}
    \tau(\bar\chi)\chi(a)
    \sum_n \Lambda(n)\chi(n)e^{-n/N}.
\end{align*}

The exponential weight $e^{-n/N}$ localises the sum to primes of size $\asymp N$ and allows one to apply a  version of the explicit formula discussed in the previous section. One can show an analogue of \eqref{eq:Lambdaexplicit} which, since 
\begin{align*}
    \int_0^\infty e^{-t/N} t^{\rho-1}\,dt=\Gamma(\rho)N^{\rho}
\end{align*}
and $\tau(\chi)=\mu(q)$ for the principal character $\chi=\chi_0^{(q)}$, takes the shape
\begin{align}\label{eq:major-arc-centre}
    f(e(a/q)e^{-1/N})
    =
    N\frac{\mu(q)}{\varphi(q)}
    -
    \frac{1}{\varphi(q)}
    \sum_{\chi \,(\mathrm{mod}\, q)}
    \tau(\bar\chi)\chi(a)
    \sum_{\rho_\chi}\Gamma(\rho_\chi)N^{\rho_\chi}
    +O(\text{lower-order terms}),
\end{align}
where \(\rho_\chi\) runs over the non-trivial  zeros of \(L(s,\chi)\). The crucial point is that the main term is exactly what one expects from equidistribution in arithmetic progressions, while the secondary term is governed by the  zeros of the Dirichlet \(L\)-functions.

The \(\Gamma\)-function makes the sum over \(\rho_\chi\) decay rapidly. Thus, under the hypothesis that  \(\Re(\rho_\chi)\leq \Theta\) and using the standard estimate
\begin{align}\label{eq:taubound}
    |\tau(\chi)|\leq q^{1/2},
\end{align}
the contribution of the zeros satisfies
\begin{align}\label{eq:HLerror}
    \left|
    \frac{1}{\varphi(q)}
    \sum_{\chi \,(\mathrm{mod}\, q)}
    \tau(\bar\chi)\chi(a)
    \sum_{\rho_\chi}\Gamma(\rho_\chi)N^{\rho_\chi}
    \right|
    \ll q^{1/2}N^{\Theta+\varepsilon}
    \ll N^{\Theta+\frac14+\varepsilon},
\end{align}
since on the Farey arcs one has \(q\leq Q\asymp N^{1/2}\).

If in \eqref{eq:major-arc-centre} we keep only the main term and substitute it into \eqref{eq:r2-circle}, then the contribution of the arc around \(a/q\) can be shown to be
\begin{align*}
    N\frac{\mu(q)^2}{\varphi(q)^2}e(-aN/q)+O(N^{1/2+\varepsilon}).
\end{align*}
Summing first over reduced residues \(a \bmod q\) produces  Ramanujan sums $c_q(-N)$ and then summing over \(q\) gives 
\begin{align*}
    N\sum_{q\leq Q}\frac{\mu(q)^2}{\varphi(q)^2}c_q(-N)
    =
    N\mathfrak{S}(N)+O(N^{1/2+\varepsilon}),
\end{align*}
where
\[
\mathfrak{S}(N)=\sum_{q=1}^\infty \frac{\mu(q)^2}{\varphi(q)^2}c_q(-N)
\]
is the same singular series that already appeared in the heuristic discussion. This is exactly what one should expect: in both cases the main term comes from the principle that primes are evenly distributed among the reduced residue classes.

We remark that the terminology of singular series can also be understood in this way: We are integrating over a circle with radius getting close to $1$. Our generating function $f(x)$ no longer converges for $|x|=1$. The contribution of singularities on that circle relates to our main term, part of which is the singular series.

Let us recapitulate what has happened so far. We have in \eqref{eq:r2-circle} rewritten $r_2(N)$ as an integral over the circle, involving $f(x)^2$. We have then decomposed the circle into arcs around rational numbers $a/q$. On each of these arcs, we have shown that $f(x)$ has a main term that (at central point of the arcs) is of size 
\begin{align*}
    \frac{N}{\varphi(q)}
\end{align*}
and an error term of size 
\begin{align*}
   q^{1/2}N^{\Theta+\eps}.
\end{align*}
Recall that $q<N^{1/2}$ and in the best case we could hope for $\Theta=1/2$. Then, ignoring the $\eps$, we actually only have for $q<N^{1/3}$ that
\begin{align*}
    \frac{N}{\varphi(q)}>q^{1/2}N^{1/2}.
\end{align*}
Even worse, Parseval implies that the average size of $|f(x)|$ is $N^{1/2}(\log N)^{1/2}$, which is too large for the expected main term. Thus, any resolution of the binary Goldbach problem needs to be supplied with much sharper information. For a more in depth description, we refer to Tao's blog post\footnote{\url{https://terrytao.wordpress.com/2012/05/20/heuristic-limitations-of-the-circle-method/}}.

It is, however, possible to obtain a mean-square estimate. Writing
\[
E_2(N):=r_2(N)-N\mathfrak{S}(N),
\]
and using Parseval's identity together with \eqref{eq:HLerror}, one obtains
\begin{align}\label{eq:meansquare-r2}
    \sum_{N\leq X}|E_2(N)|^2
    \ll X^{\frac32+2\Theta+\varepsilon}.
\end{align}
This is the first point at which the circle method yields genuine information for the binary Goldbach problem. We give more details about mean square bounds in the following sections. 

We can use \eqref{eq:meansquare-r2} and Chebyshev's inequality, or the following simple calculation that uses that $\mathfrak{S}(2N')\gg 1$, to show that for  most even $N$, $E_2(N)$ is small :
\begin{align*}
    \sum_{N'<X/2} \1_{|E_2(2N)|>\eta 2N'\mathfrak{S}(2N')} \leq \sum_{N'\leq X/2} \frac{|E_2(2N')|^2}{\eta^2 (2N')^2 \mathfrak{S}(2N')^2}\ll \eta^{-2} X^{-\frac12+2\Theta+\varepsilon}.
\end{align*}

If we set $\eta=1/2$, this shows that $r_2(N)\geq \frac{N \mathfrak{S}(N)}{2}$ for all even $N\leq X$ with at most $O_\varepsilon(X^{-\frac12+2\Theta+\varepsilon})$ exceptions. In  particular, Theorem \ref{thm:HL} follows and we have shown that as  long as $\Theta<3/4$, almost all even integers are the sum of two primes.

\section{Siegel--Walfisz and Vinogradov}

The Hardy--Littlewood argument of the previous section shows how the circle method leads to a strong conclusion once one has sufficiently good understanding of the distribution of primes in arithmetic progressions. We now explain how two later advances made it possible to replace this conditional argument by an unconditional one. The first is the Siegel--Walfisz theorem, which gives uniform control of primes in arithmetic progressions for small moduli. This allows one to handle those arcs belonging to rational numbers with small denominator. The second is Vinogradov's estimate for exponential sums over primes, which takes care of the remaining arcs.

\subsection{Siegel--Walfisz}

For a fixed Dirichlet character \(\chi\) of modulus \(q\), and for \(s=\sigma+it\), the classical argument of de la Vall\'ee Poussin extends from \(\zeta(s)\) to \(L(s,\chi)\) and yields a zero-free region, with a positive constant $c$,  of the form 

\begin{align*}
    \sigma \geq 1-\frac{c}{\log(q(|t|+2))},
\end{align*}
with at most one exception, which, if it exists, is real, simple, and attached to a real character. Though it does not play a central r\^ole in what follows, we will encounter this possible exceptional character again. We call the associated zero a \emph{Siegel zero}.

Siegel's theorem then shows, ineffectively (the implied constants depend on parameters that cannot be computed), that such a character cannot have small conductor. In particular, for every fixed \(A>0\), once \(X\) is sufficiently large, there is no such character of conductor at most \((\log X)^A\). The relevant consequence for us is the character form of the Siegel--Walfisz theorem: there exists $c>0$ such that for every \(A>0\) and uniformly for every non-principal character \(\chi \pmod q\) with
\begin{align*}
    q\le (\log X)^A,
\end{align*}
one has
\begin{align}\label{eq:SW-char}
    \sum_{n\le X}\Lambda(n)\chi(n)
    \ll_A X e^{-c\sqrt{\log X}},
\end{align}
where the implied constant depends on $A$ and is ineffective.

\subsection{Decomposing the circle again}

We now switch to Vinogradov's formulation of the circle method. Instead of working with an infinite generating function on the circle \(|x|=e^{-1/N}\), we work for $\alpha\in[0,1)$ with exponential sums
\begin{align*}
    S(\alpha):=\sum_{n\le N}\Lambda(n)e(\alpha n).
\end{align*}
This is essentially the same weighted object as in the previous section, now written as a finitely supported Fourier series rather than an infinite power series.

The residue theorem is replaced by the Fourier identity
\begin{align*}
    \int_0^1 e(\alpha n)\,d\alpha=
   \begin{cases}
       1,&\text{if }n=0,\\
       0,&\text{otherwise},
   \end{cases}
\end{align*}
and hence
\begin{align}\label{eq:r2-fourier}
    r_2(N)=\int_0^1 S(\alpha)^2 e(-\alpha N)\,d\alpha.
\end{align}
This is the Fourier-analytic counterpart of \eqref{eq:r2-circle}.

We again decompose \([0,1]\) into arcs around rational points, but now allow the order $Q$ to be more general in the range
\[
\sqrt{N}\leq Q\leq N.
\]
Instead of the variable-width Farey arcs, we split \([0,1]\) into two parts according to a cutoff $R:=N/Q$. For denominators $q\leq R$ we use fixed-width arcs
\begin{align}\label{eq:majorarcdef}
\mathfrak{M}(R)=\bigcup_{\substack{q\leq R\\ (a,q)=1}} \left\{\alpha: \left|\alpha-\frac{a}{q}\right|\leq \frac{R}{qN}\right\},
\end{align}
and call the remainder
\begin{align*}
    \mathfrak{m}(R)=[0,1)\setminus \mathfrak{M}(R).
\end{align*}
We split the integral accordingly:
\begin{align}
 \nonumber r_2(N)=& \int_0^1 S(\alpha)^2 e(-N\alpha)\,d\alpha = \int_{\mathfrak{M}(R)} S(\alpha)^2 e(-N\alpha)\,d\alpha + \int_{\mathfrak{m}(R)}S(\alpha)^2 e(-N\alpha)\,d\alpha\\
    =&: r_{\mathfrak{M}}(N)+r_{\mathfrak{m}}(N).   \label{eq:splitmajorminor}
\end{align}

The arc around $a/q$ (and the related Farey arc previously) has length $R(qN)^{-1}$. The cutoff $q\leq R$ is natural because it distinguishes whether this width is larger or smaller than \(1/N\). The reason  for $1/N$ to be the critical scale can be seen from the simplest model: the geometric sum
\begin{align}\label{eq:geosum}
    \sum_{n\leq N} e(\alpha n)
    =
    e\!\left(\frac{(N+1)\alpha}{2}\right)
    \frac{\sin(\pi N\alpha)}{\sin(\pi\alpha)}
\end{align}
is essentially the sinc kernel.  For \(\|\alpha\|\) smaller than \(1/N\) the sum is of size \(N\) and essentially constant, while for \(\|\alpha\|\) larger than \(1/N\) it begins to decay. In other words, the arc width is inversely proportional to the length of the sum.

On the major arcs, with $R=(\log N)^A$ for a large fixed $A$, one proceeds much as in the previous section: expand near a rational point $a/q$, separate residue classes modulo $q$ using characters, and apply the information on primes in arithmetic progressions to recover the main term. Setting
\[
    \Psi(x,\chi) := \sum_{n\leq x} \Lambda(n)\chi(n),
\]
the Siegel--Walfisz theorem \eqref{eq:SW-char} gives $\Psi(x,\chi) \ll_A x\,e^{-c\sqrt{\log x}}$ for non-principal $\chi \pmod{q}$ with $q\leq R$. By partial summation,
\begin{align}\label{eq:sum-by-parts}
    \sum_{n\leq N}\Lambda(n)\chi(n)e(\beta n)
    = \Psi(N,\chi)\,e(\beta N) - 2\pi i\beta\int_1^N \Psi(t,\chi)\,e(\beta t)\,dt,
\end{align}
and applying the Siegel--Walfisz bound to both terms gives
\[
    \sum_{n\leq N}\Lambda(n)\chi(n)e(\beta n) \ll_A N\,e^{-c'\sqrt{\log N}},
\]
since $|\beta|\leq R/N$ and the exponential decays faster than any power of $\log N$. For the principal character, the prime number theorem gives the main term. Combining across characters via \eqref{eq:char-expand} yields
\begin{align}\label{eq:SW-major-local}
    S(\alpha)
    =
    \frac{\mu(q)}{\varphi(q)}\sum_{n\leq N}e(\beta n)
    +O\!\left(N\exp(-c'\sqrt{\log N})\right),
\end{align}
uniformly for $q\leq R$ and $\beta = \alpha - a/q$ in the corresponding arc. Here the main term reflects exactly the heuristic from Section~2: after normalisation by $\varphi(q)$, the primes have density $1$ and are evenly distributed between the reduced residue classes.

After the change of variables \(\beta=\alpha-a/q\), the main term from the arc around \(a/q\) becomes
\begin{align*}
    \frac{\mu(q)^2}{\varphi(q)^2}e\!\left(-\frac{aN}{q}\right)
    \int_{-R/(qN)}^{R/(qN)}
    \left(\sum_{n\leq N}e(\beta n)\right)^2 e(-N\beta)\,d\beta
    +O(\text{smaller terms}).
\end{align*}
Summing first over all reduced residues \(a \bmod q\) produces the Ramanujan sum \(c_q(-N)\), the same object that arose in the heuristic discussion. Since the arc half-width \(R/(qN)\) exceeds \(1/N\), we can use the sinc kernel decay \eqref{eq:geosum} to complete the \(\beta\)-integral to all of \([0,1)\) with negligible error, recovering the count of representations:
\begin{align*}
    \int_0^1  \left(\sum_{n\leq N}e(\beta n)\right)^2 e(-\beta N)\,d\beta = \sum_{n_1+n_2=N}1=N+O(1).
\end{align*}
Hence the total contribution of the major arcs is
\begin{align*}
   r_{\mathfrak{M}}(N)= N\sum_{q\leq R}\frac{\mu(q)^2}{\varphi(q)^2}c_q(-N)
    +O\!\left(NR^2 \exp(-c'\sqrt{\log N})\right),
\end{align*}
where again the truncated singular of \eqref{eq:truncated-SS} appears. Similarly as before, the tail converges absolutely and contributes negligibly, so
\begin{align*}
   r_{\mathfrak{M}}(N) = N\mathfrak{S}(N) + O\!\left(N\exp(-c''\sqrt{\log N})\right).
\end{align*}
Thus on the major arcs we recover unconditionally the Hardy--Littlewood main term for every individual \(N\).

\subsection{Vinogradov's bound}

It remains to handle the minor arcs. Recall that even under the assumption that $\Theta=1/2$ we were not able to extract the main term $N\mu(q)/\varphi(q)$ from $S(a/q + \beta)$ for all $q$, and without any $\Theta<1/2$ are left only with the limited range $q<(\log N)^A$. To see the right scale for what to expect, suppose heuristically that all  zeros of the Dirichlet $L$-functions modulo $q$ lie on the critical line, except possibly for one zero very close to $\Re s = 1$. From the explicit formula discussion above, the good  zeros on the critical line contribute at most
\begin{align*}
    q^{1/2}N^{1/2}(\log N)^{O(1)},
\end{align*}
while a single bad zero attached to a character $\chi$ could contribute
\begin{align*}
    \frac{N|\tau(\chi)|}{\varphi(q)}\leq \frac{N(\log N)^{O(1)}}{q^{1/2}}.
\end{align*}
Even a bad zero does not immediately ruin the minor arc bound, since the factor $q^{-1/2}$ still provides decay as $q$ grows. Vinogradov's achievement was to prove, without any hypothesis on the zeros, an upper bound of essentially this strength on the minor arc centred at $a/q$.

\begin{proposition}[Vinogradov--Vaughan]\label{prop:vino}
Let \(|\alpha-a/q|<q^{-2}\) with \((a,q)=1\). Then
\begin{align}\label{eq:vinogradov-minor}
    S(\alpha)
    \ll
    \Bigl(Nq^{-1/2}+N^{4/5}+(Nq)^{1/2}\Bigr)(\log N)^4.
\end{align}
\end{proposition}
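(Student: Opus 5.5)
The plan is to follow Vaughan's proof of Vinogradov's estimate, which rests on Vaughan's identity. We fix parameters \(U,V\ge 2\) with \(UV\le N\), to be chosen at the end. Vaughan's identity writes, for \(n>V\),
\begin{align*}
\Lambda(n)=\sum_{\substack{bd=n\\ d\le U}}\mu(d)\log b-\sum_{\substack{cd=n\\ d\le UV}}\Bigl(\sum_{\substack{d=mr\\ m\le U,\ r\le V}}\mu(m)\Lambda(r)\Bigr)+\sum_{\substack{mk=n\\ m>U,\ k>V}}\Bigl(\sum_{\substack{d\mid m\\ d\le U}}\mu(d)\Bigr)\Lambda(k).
\end{align*}
The terms \(n\le V\) contribute trivially \(O(V)\). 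Multiplying by \(e(\alpha n)\) and summing over \(n\le N\) splits \(S(\alpha)\) into three sums \(S_1,S_2,S_3\). The first two are of \emph{type I}: an outer variable with bounded coefficients, times a smooth inner sum (\(\log b\) or \(1\)) over an interval. The third is of \emph{type II}: a genuine bilinear form with two rough coefficient sequences, both variables being large.

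For the type I sums I would first sum over the smooth variable. For \(S_1\) this is done via partial summation to remove \(\log b\), and the geometric sum \eqref{eq:geosum} then gives the bound \(\min(N/d,\|\alpha d\|^{-1})\). This leaves sums of the form \(\sum_{d\le D}\min(N/d,\|\alpha d\|^{-1})\). The standard lemma, proved by splitting \(d\) into blocks of length \(q\) and using that the points \(\alpha d\) are spaced about \(1/q\) apart modulo \(1\) because \(|\alpha-a/q|<q^{-2}\), gives
\begin{align*}
\sum_{d\le D}\min\Bigl(\frac{N}{d},\frac{1}{\|\alpha d\|}\Bigr)\ll\Bigl(\frac{N}{q}+D+q\Bigr)\log(2qN).
\end{align*}
Applying this with \(D=U\) for \(S_1\), and with \(D=UV\) for \(S_2\), gives the bound \((N/q+UV+q)(\log N)^2\). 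The coefficient in \(S_2\) is at most \(\log d\), and a dyadic split costs one more logarithm.

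For the type II sum \(S_3\) I would split \(m\) dyadically into ranges \(M<m\le 2M\) with \(U<M\le N/V\) and apply Cauchy--Schwarz in \(m\). The divisor-type coefficient is bounded in mean square by \(M(\log N)^3\). Expanding the square of the \(k\)-sum then produces
\[
\sum_{k,k'}\Lambda(k)\Lambda(k')\min\bigl(M,\|\alpha(k-k')\|^{-1}\bigr).
\]
Grouping by \(h=k-k'\) and applying the same lemma once more gives \(\ll (N/k\text{-range})\cdot(N/q+N/M+q)\log\) for this expression. After taking square roots and summing over the dyadic ranges, this yields
\[
S_3\ll\Bigl(Nq^{-1/2}+NU^{-1/2}+NV^{-1/2}+(Nq)^{1/2}\Bigr)(\log N)^4.
\]

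Finally I would choose \(U=V=N^{2/5}\), so that \(UV=N^{4/5}\) and \(NU^{-1/2}=NV^{-1/2}=N^{4/5}\). Collecting all terms then gives \eqref{eq:vinogradov-minor}. The term \(N/q\) is dominated by \(Nq^{-1/2}\), and the terms \(q\) and \(V\) are covered by the others: when \(q\le N\) we have \(q\le (Nq)^{1/2}\), and when \(q>N\) the bound is trivial.

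I expect the main obstacle to be the type II estimate. There one has to keep track of the logarithm powers so that they total exactly \((\log N)^4\). One also has to make the dyadic ranges compatible with the Cauchy--Schwarz step, which requires handling the condition \(mk\le N\); this can be done by a dyadic split in both variables or by a standard separation of variables.
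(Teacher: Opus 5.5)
The paper does not prove this proposition: it quotes it and says it will not go into the proof. The attribution to Vaughan points to exactly the argument you outline, namely Vaughan's identity with $U=V=N^{2/5}$, type I sums handled by $\sum_{d\le D}\min(N/d,\|\alpha d\|^{-1})\ll(N/q+D+q)\log(2qN)$, and a Cauchy--Schwarz treatment of the type II sum (see \cite{Vau1997}, or Davenport's \emph{Multiplicative Number Theory}, Ch.~25). Your outline is correct, but there are two slips you should fix.

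\begin{enumerate}
\item \textbf{Sign in Vaughan's identity.} Written with $\sum_{d\mid m,\,d\le U}\mu(d)$, the type II term carries a \emph{minus} sign; equivalently, it carries a plus sign if you use $\sum_{d\mid m,\,d>U}\mu(d)$. Testing $U=V=1$ and $n=p_1p_2$ shows your version fails. This does not affect the estimate, since only absolute values enter.

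\item \textbf{Missing diagonal term.} In the type II expansion, your intermediate bound $(N/M)(N/q+N/M+q)\log N$ leaves out the diagonal $k=k'$ (i.e.\ $h=0$). For $1\le |h|<N/M$ you may indeed use $M<N/|h|$ to apply the same lemma. The diagonal, however, contributes
\[
M\sum_{k\le N/M}\Lambda(k)^2\ll N\log N.
\]
After multiplying by the mean square $M(\log N)^3$ of the divisor coefficient and taking square roots, this gives $(NM)^{1/2}(\log N)^2\le NV^{-1/2}(\log N)^2$, using $M\le N/V$. This is the only source of the $NV^{-1/2}$ term that you correctly include in your final bound for $S_3$.
\end{enumerate}

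Your concern about the condition $mk\le N$ is unnecessary. After Cauchy--Schwarz in $m$, for fixed $k,k'$ the range $M<m\le\min(2M,N/k,N/k')$ is an interval. The inner geometric sum is therefore still $\ll\min\bigl(M,\|\alpha(k-k')\|^{-1}\bigr)$, and no separation of variables is needed.
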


If $\alpha$ lies on the minor arcs, a Dirichlet approximation $\alpha = a/q + \beta$ with $R < q \leq Q$ exists. Substituting this into \eqref{eq:vinogradov-minor} and recalling $Q = N/R = N(\log N)^{-A}$, one obtains a saving of a power of $\log N$ over the trivial bound $S(\alpha)\ll N$ throughout $\mathfrak{m}(R)$. With Vinogradov's minor arc bound we can estimate the mean square.

\begin{lemma}\label{lem:minor-arc-meansq}
For \(R<N^{1/2-\varepsilon}\) one has
\begin{align*}
    \sum_{N\leq X}|r_{\mathfrak{m}}(N)|^2
    \ll \bigl(X^2/R+X^{8/5}\bigr)X(\log X)^5.
\end{align*}
\end{lemma}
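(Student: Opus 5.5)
The plan is the standard Bessel/Parseval argument. The key observation is that $r_{\mathfrak{m}}(N)$ is, as a function of $N$, the $N$-th Fourier coefficient of the function $\alpha\mapsto S(\alpha)^2\1_{\mathfrak{m}(R)}(\alpha)$ on $[0,1)$. Here we fix the dependence on $N$ by working with a single exponential sum $S(\alpha)=\sum_{n\le X}\Lambda(n)e(\alpha n)$ of length $X$ and a single minor-arc set $\mathfrak{m}(R)$ defined with $X$ in place of $N$. For $N\le X$ the truncation $n\le X$ does not affect $\int_0^1 S(\alpha)^2e(-N\alpha)\,d\alpha=r_2(N)$, so this is harmless.

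By Bessel's inequality,
\begin{align*}
\sum_{N\le X}|r_{\mathfrak{m}}(N)|^2\le \int_{\mathfrak{m}(R)}|S(\alpha)|^4\,d\alpha\le \Bigl(\sup_{\alpha\in\mathfrak{m}(R)}|S(\alpha)|\Bigr)^2\int_0^1|S(\alpha)|^2\,d\alpha .
\end{align*}
Parseval and the prime number theorem (or just Chebyshev's bound) give
\begin{align*}
\int_0^1|S|^2=\sum_{n\le X}\Lambda(n)^2\ll X\log X,
\end{align*}
which accounts for the factor $X(\log X)$.

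It remains to bound $\sup_{\mathfrak{m}(R)}|S(\alpha)|^2\ll (X^2/R+X^{8/5})(\log X)^{4}$, up to the stated power of $\log$. For $\alpha\in\mathfrak{m}(R)$, Dirichlet's theorem with parameter $Q=X/R$ gives $(a,q)=1$ and $q\le Q$ with $|\alpha-a/q|\le 1/(qQ)\le q^{-2}$. If $q\le R$, then $|\alpha-a/q|\le R/(qX)$, so $\alpha\in\mathfrak{M}(R)$, a contradiction. Hence $R<q\le X/R$. Proposition~\ref{prop:vino} then yields
\begin{align*}
S(\alpha)\ll \bigl(XR^{-1/2}+X^{4/5}+X R^{-1/2}\bigr)(\log X)^4,
\end{align*}
since $(Xq)^{1/2}\le X R^{-1/2}$ when $q\le X/R$. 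Squaring gives $|S(\alpha)|^2\ll (X^2/R+X^{8/5})(\log X)^8$.

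The main obstacle is the bookkeeping of logarithms rather than any conceptual point. Naively the argument above gives $(\log X)^9$, not the stated $(\log X)^5$. To reach the stated power I would avoid squaring the full sup. Instead I would use Hölder in the form $\int_{\mathfrak{m}}|S|^4\le\sup|S|^2\int|S|^2$ together with the sharper version of the Vaughan bound available in the literature, namely $(\log X)^{3/2}$ or $(\log X)^2$ in place of $(\log X)^4$. Alternatively, I would accept a slightly larger log power, since only the power saving $X^3/R+X^{13/5}$ matters for the application.

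The hypothesis $R<X^{1/2-\eps}$ guarantees $R\le Q$, so that the major arcs are disjoint and the Dirichlet step above is consistent. It also ensures that the term $(Xq)^{1/2}$ is dominated by $XR^{-1/2}$.
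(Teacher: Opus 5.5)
Your argument is the paper's proof. It uses Parseval/Bessel to reduce to $\int_{\mathfrak m(R)}|S|^4$, bounds this by $\sup_{\mathfrak m(R)}|S|^2\int_0^1|S|^2$, places $q$ in $(R,X/R]$ by Dirichlet's theorem with $Q=X/R$, and then applies Proposition~\ref{prop:vino}; fixing $S$ and $\mathfrak m(R)$ at scale $X$ is exactly what the paper does implicitly. Your log count is also correct: with $(\log N)^4$ in Proposition~\ref{prop:vino}, the paper's own chain gives only $(\log X)^9$ (apparently the logarithm was not squared). So the printed exponent $5$ needs a sharper minor-arc bound, which is irrelevant for the applications, and your phrase about ``avoiding squaring the full sup'' does not by itself save anything.
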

\begin{proof}
By Parseval's identity,
\[
\sum_{N\le X}|r_{\mathfrak{m}}(N)|^2
\le
\int_{\mathfrak{m}(R)}|S(\alpha)|^4\,d\alpha.
\]
For $\alpha\in\mathfrak{m}(R)$ there is a Dirichlet approximation $\alpha=a/q+\beta$ with $R<q\le X/R$ and $|\beta|\le q^{-2}$, so Proposition~\ref{prop:vino} gives
\[
S(\alpha)\ll \bigl(XR^{-1/2}+X^{4/5}\bigr)(\log X)^4.
\]
On the other hand, by Parseval and the Prime Number Theorem,
\[
\int_0^1|S(\alpha)|^2\,d\alpha=\sum_{n\leq X}\Lambda(n)^2 \leq (\log X) \sum_{n\leq X}\Lambda(n) \sim X\log X.
\]
Hence,
\begin{align*}
\int_{\mathfrak{m}(R)}|S(\alpha)|^4\,d\alpha
&\le
\Bigl(\sup_{\alpha\in\mathfrak{m}(R)}|S(\alpha)|^2\Bigr)
\int_0^1|S(\alpha)|^2\,d\alpha
\ll
\bigl(X^2/R+X^{8/5}\bigr)X(\log X)^5.
\end{align*}
This argument applies both an $L^\infty$ bound (on the minor arcs) and an $L^2$ bound (via Parseval globally), which is typical of the circle method.
\end{proof}
Combining major and minor arcs, we obtain the following unconditional Theorem that for the first time showed that almost all even integers are the sum of two primes.
\begin{theorem}[Chudakov; van der Corput; Estermann]\label{thm:CVE}
For any $A>0$ one has
\begin{align*}
     |\mathcal{E}(X)|\ll_A X (\log X)^{-A}.
\end{align*}
\end{theorem}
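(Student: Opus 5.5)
Fix $A>0$ and a large parameter $B$ depending on $A$. For $N\le X$ write $r_2(N)=r_{\mathfrak{M}}(N)+r_{\mathfrak{m}}(N)$ as in \eqref{eq:splitmajorminor}, with the dissection taken at level $N$ and $R=(\log X)^{B}$. The idea is to combine a pointwise asymptotic on the major arcs with the mean-square bound of Lemma~\ref{lem:minor-arc-meansq} on the minor arcs, and then to count exceptions by a Chebyshev-type argument, as was done at the end of the proof of Theorem~\ref{thm:HL}. One technical point comes first. The major/minor dissection depends on $N$, while Parseval requires a single $\alpha$-integral. So I fix the dissection once with parameter $X$ in place of $N$, using the same $\mathfrak{M}(R)$ for every $N\le X$ and $S(\alpha)=\sum_{n\le X}\Lambda(n)e(\alpha n)$. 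Since $n_1+n_2=N\le X$ forces $n_i\le X$, the identity \eqref{eq:r2-fourier} remains valid for every $N\le X$, which is exactly the setting in which Lemma~\ref{lem:minor-arc-meansq} is stated.

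\emph{Major arcs.} I rerun the Siegel--Walfisz computation leading to \eqref{eq:SW-major-local}, now for $S(\alpha)$ of length $X$. This gives, on the arc around $a/q$ with $q\le R$, the approximation $S(\alpha)=\frac{\mu(q)}{\varphi(q)}\sum_{n\le X}e(\beta n)+O(Xe^{-c'\sqrt{\log X}})$. Squaring, integrating against $e(-N\alpha)$ over the arcs, and summing over $a$ produces $c_q(-N)$ times $\int_{|\beta|\le R/(qX)}\bigl(\sum_{n\le X}e(\beta n)\bigr)^2e(-N\beta)\,d\beta$. Completing this integral to $[0,1)$ gives $\#\{n_1+n_2=N,\ n_i\le X\}=N-1$. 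The completion error is controlled by the sinc decay \eqref{eq:geosum}; it is $\ll qX/R$ per arc, hence $\ll X R^{-1}\sum_{q\le R}q\,\frac{\mu(q)^2|c_q(N)|}{\varphi(q)^2}$. This is small compared with $N$ only for $N$ not too small, but $N\le X(\log X)^{-A}$ can be discarded trivially. Adding the tail of the singular series, I get $r_{\mathfrak{M}}(N)=N\mathfrak{S}(N)+O(X(\log X)^{-B/2})$ uniformly in $X(\log X)^{-A}<N\le X$.

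\emph{Counting exceptions.} Let $N$ be even in that range with $r_2(N)=0$. Since $\mathfrak{S}(N)\ge 2\prod_{p\ge3}(1-(p-1)^{-2})\gg1$, the major-arc asymptotic forces $|r_{\mathfrak{m}}(N)|\gg N\gg X(\log X)^{-A}$. Chebyshev's inequality together with Lemma~\ref{lem:minor-arc-meansq} then bounds the number of such $N$ by
\[
\ll \frac{(X^2/R+X^{8/5})X(\log X)^5}{X^2(\log X)^{-2A}}\ll X(\log X)^{5+2A-B}.
\]
Choosing $B=3A+5$ and adding the $X(\log X)^{-A}$ discarded small $N$ gives the theorem. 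The same count works for $r_2'$ because prime powers contribute only $O(X^{1/2}(\log X)^2)$ to $r_2(N)$.

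The step most likely to cause trouble is the uniformity of the major-arc asymptotic once the dissection is fixed at level $X$ rather than $N$. The completion of the $\beta$-integral and the approximation of $S$ on arcs of width $R/(qX)$ must both hold for every $N$ in the range. If the sinc completion turns out to be delicate, my first fallback is to allow a $(\log X)^{O(1)}$ loss in the error term and absorb it by enlarging $B$.
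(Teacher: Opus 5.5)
Your route is the paper's: a pointwise major-arc main term via Siegel--Walfisz, Lemma~\ref{lem:minor-arc-meansq} on the minor arcs, and Chebyshev. Fixing the dissection at level $X$ and discarding $N\le X(\log X)^{-A}$ tidy up points the paper's sketch leaves implicit.

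The step you flagged does fail, though, and your fallback does not rescue it. The estimate $r_{\mathfrak M}(N)=N\mathfrak S(N)+O(X(\log X)^{-B/2})$ is not uniform in $N$.

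\begin{itemize}
\item Each squarefree $d\mid N$ with $d\le R$ contributes $d/\varphi(d)\ge 1$ to $\sum_{q\le R}q\mu(q)^2|c_q(N)|/\varphi(q)^2$, since $c_d(N)=\varphi(d)$.
\item Suppose $\prod_{p\le\frac12\log X}p$ divides $N$; there are $X^{1/2+o(1)}$ such $N\le X$. Then every squarefree $(\tfrac12\log X)$-smooth $d\le R$ divides $N$, and there are $\gg_B R$ of these. So your completion bound is $\gg_B X$.
\item For the same $N$, the tail $\sum_{q>R}\mu(q)^2c_q(N)/\varphi(q)^2$ is a positive proportion of $\mathfrak S(N)$. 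That proportion depends on $B$ and does not tend to $0$ as $X\to\infty$.
\end{itemize}

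The loss scales with $R$ itself, so enlarging $B$ cannot absorb it. The paper's claim that the major-arc asymptotic holds ``for every individual $N$'' glosses over exactly the same point.

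The repair is short. Do not complete the singular series, since only a lower bound is needed. Writing $q=dm$ with $d=(q,N)$ gives
\[
\mathfrak S_R(N)=\sum_{d\mid N,\,d\le R}\frac{\mu(d)^2}{\varphi(d)}\sum_{m\le R/d,\,(m,N)=1}\frac{\mu(m)}{\varphi(m)^2}.
\]
For even $N$ every $m$ here is odd. Hence each inner sum is at least $2-\prod_{p\ge 3}\bigl(1+(p-1)^{-2}\bigr)>\tfrac12$, and so $\mathfrak S_R(N)\ge 1$ uniformly.

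For the completion error you have two options:
\begin{itemize}
\item Use arcs of $q$-independent width $R/X$. The minor-arc input is unchanged, and the completion error becomes $\ll (X/R)\,N/\varphi(N)$ uniformly.
\item Keep your arcs and discard the $N$ with $\sum_{d\mid N,\,d\le R}\frac{d}{\varphi(d)}\bigl(1+\log(R/d)\bigr)>R^{1/2}$. Summing this quantity over $N\le X$ gives $\ll X(\log R)^2$. So only $\ll X(\log R)^2R^{-1/2}$ values are discarded, which is admissible for your choice $B=3A+5$.
\end{itemize}
Either way $r_{\mathfrak M}(N)\ge N/2$ for all remaining $N>X(\log X)^{-A}$, and your Chebyshev count then goes through verbatim.
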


\begin{proof}
From the major arc analysis, $r_2(N) = N\mathfrak{S}(N) + O(N\exp(-c\sqrt{\log N})) + r_{\mathfrak{m}}(N)$ for every $N$, with $R = (\log N)^A$. The pointwise major arc error is negligible. By Lemma~\ref{lem:minor-arc-meansq} with $R = (\log X)^A$, one has $\sum_{N\leq X}|r_{\mathfrak{m}}(N)|^2 \ll X^3(\log X)^{-A+5}$. Since $\mathfrak{S}(N)\gg 1$ for even $N$, Chebyshev's inequality then shows that $r_2(N) \geq \frac{1}{2}N\mathfrak{S}(N)$ for all but $O_A(X(\log X)^{-A+5})$ even integers $N\leq X$. Replacing $A$ by $A+5$ gives the stated bound.
\end{proof}

This result was established independently by Chudakov \cite{C1937}, van der Corput \cite{vdC1937}, and Estermann \cite{E1938}.

We will not go into the proof of the crucial estimate \eqref{eq:vinogradov-minor} here, since it relies on exponential-sum methods that are somewhat orthogonal to the later developments in this survey. What matters to us is that, from this point on, the binary Goldbach problem is an unconditional almost-all theorem, and the subsequent works of Montgomery--Vaughan and Pintz are devoted to replacing the logarithmic saving in the exceptional set by a genuine power saving.

\section{Power saving}

Recall that $\mathcal{E}(X)$ denotes the set of even integers $N\leq X$ that cannot be written as a sum of two primes. Montgomery and Vaughan were the first to prove a genuine power saving for its size, in 1975 \cite{MV1975}.

\begin{theorem}[Montgomery--Vaughan]\label{thm:M-V}
There exists a \(\delta>0\) such that
\begin{align*}
    |\mathcal{E}(X)|\ll X^{1-\delta}.
\end{align*}
\end{theorem}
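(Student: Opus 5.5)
We follow the Montgomery--Vaughan strategy: take the major arcs as large as a power of $X$, handle the minor arcs by the mean-square argument of Lemma~\ref{lem:minor-arc-meansq}, and make all of the work on the major arcs come from zero-density information about Dirichlet $L$-functions. Fix a small $\delta_0>0$ and set $R=X^{\delta_0}$. By Proposition~\ref{prop:vino} and Lemma~\ref{lem:minor-arc-meansq}, applied with this $R<X^{1/2-\varepsilon}$, we get
\begin{align*}
\sum_{N\le X}|r_{\mathfrak m}(N)|^2\ll X^{3-\delta_0}(\log X)^5 .
\end{align*}
Chebyshev's inequality then shows that $|r_{\mathfrak m}(N)|\le \tfrac14 N\mathfrak S(N)$ for all even $N\le X$ outside a set of size $O(X^{1-\delta_0+\varepsilon})$. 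So everything hinges on showing that $r_{\mathfrak M}(N)\ge \tfrac34 N\mathfrak S(N)$ for all but $O(X^{1-\delta})$ even $N$, even though $q$ now runs up to a power of $X$ and the Siegel--Walfisz theorem is no longer available.

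On $\mathfrak M(R)$ we expand $S(a/q+\beta)$ via \eqref{eq:char-expand} into $\sum_{\chi \bmod q}\tau(\bar\chi)\chi(a)S(\beta,\chi)/\varphi(q)$, reduce each $\chi$ to the primitive character inducing it, and apply an explicit formula truncated at height $T=X^{\delta_0}$ to each $S(\beta,\chi)$. This writes $S(\beta,\chi)$ as a main term (only for $\chi_0$), minus $\sum_{|\gamma|\le T}\sum_n n^{\rho-1}e(\beta n)$, plus an error of size $X^{1-c}$. Squaring and integrating over the arcs, the main terms reproduce $N\mathfrak S(N)$ up to $O(N R^{-1+\varepsilon})$, exactly as in Section~4. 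The remaining terms are cross terms of the main term against zero sums, which are bounded pointwise in $N$, and zero--zero terms, which are bounded in mean square and fed into Chebyshev's inequality. Each zero $\rho=\beta_\rho+i\gamma$ contributes at most about $N^{\beta_\rho}$ times a factor depending on $q$. The needed input is Gallagher's large-sieve density estimate in effective form,
\begin{align*}
\sum_{q\le T}\ \sideset{}{^*}\sum_{\chi \bmod q} N(\sigma,T,\chi)\ll T^{c(1-\sigma)} .
\end{align*}
Combined with a log-free zero-free region
\begin{align*}
\sigma\ge 1-\frac{c}{\log T}
\end{align*}
for all $\chi$ of modulus at most $T$, with the possible exception of one real zero $\beta_1$, this gives
\begin{align*}
\sum_{\rho}X^{\beta_\rho-1}\ll \exp\Bigl(-c\frac{\log X}{\log T}\Bigr)+(\text{contribution of }\beta_1),
\end{align*}
which is a small constant once $\delta_0$ is small. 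Working with the constant $\tfrac34$ instead of $1$ leaves the room to absorb this.

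The main obstacle is the possible Siegel zero $\beta_1$, attached to a real character $\chi_1$ of conductor $r\le R$. Its contribution is not small: it produces a secondary main term of the shape
\begin{align*}
-N^{\beta_1}\,\mathfrak S'(N)/\beta_1 ,
\end{align*}
where $\mathfrak S'$ is a twisted singular series built from $\chi_1$ and Gauss sums. I would handle it as Montgomery--Vaughan do. Write the major-arc main term as $N\mathfrak S(N)\bigl(1+\epsilon(N)N^{\beta_1-1}\bigr)$, and check from the Euler products that $|\mathfrak S'(N)|\le\mathfrak S(N)$, with $\mathfrak S'(N)$ supported essentially on $N$ with $r$-part constraints. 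Then the main term is at least $N\mathfrak S(N)(1-N^{\beta_1-1})$. By Siegel's bound, $1-\beta_1\gg r^{-\varepsilon}$, and effectively $1-\beta_1\gg r^{-1/2}(\log r)^{-2}$. So for $\beta_1$ close to $1$ we instead use the Deuring--Heilbronn phenomenon: the existence of $\beta_1$ pushes all other zeros further left, with the density estimate improved by a factor $(1-\beta_1)\log T$. That extra saving compensates for the lost main term when $N^{\beta_1-1}$ is near $1$. Balancing the two regimes, namely $1-\beta_1\ge \eta/\log X$ (the main term survives) against $1-\beta_1<\eta/\log X$ (Deuring--Heilbronn repulsion), is the delicate step and fixes the admissible $\delta_0$. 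Collecting the exceptional sets from the minor arcs and from the zero--zero terms on the major arcs then gives $|\mathcal E(X)|\ll X^{1-\delta}$ with an effectively computable $\delta>0$.
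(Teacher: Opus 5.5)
There is a genuine gap on the major arcs. Your treatment of the minor arcs matches the paper. The problem is that you bound the zero--zero terms in mean square over $N$ and then apply Chebyshev's inequality, and this cannot give a power saving.

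At level $R=X^{\delta_0}$, zero-density and zero-free-region information saves, in the worst case, only the constant factor $\exp(-c\log X/\log R)=e^{-c/\delta_0}$, as you note yourself. So for the zero--zero contribution $Z(N)$ the best you get this way is $\sum_{N\le X}|Z(N)|^2\ll e^{-2c/\delta_0}X^3$. Chebyshev then leaves $O(e^{-2c/\delta_0}X)$ possibly exceptional $N$: a positive proportion, not $X^{1-\delta}$.

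This is not a removable inefficiency, because a single zero--zero pair can be of full size on a set much larger than $X^{1-\delta}$. The clearest case is the Siegel zero, where you have also picked the wrong term. The pole--zero term $N^{\widetilde\beta}/\widetilde\beta$ has coefficient $\mathfrak S(\chi_0^{(1)},\widetilde\chi,N)\ll\mathfrak S(N)\widetilde r/\varphi(\widetilde r)^2$ by \eqref{eq:ss-bounds}, so it is negligible. The term that competes with $N\mathfrak S(N)$ is the zero--zero term \eqref{eq:MVsiegelsecondary}. Its coefficient equals $\widetilde\chi(-1)\mathfrak S(N)$ for every even multiple $N$ of $\widetilde r$, so when $\widetilde\chi(-1)=-1$ it almost cancels the main term there. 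Siegel's theorem only forces $\widetilde r$ beyond every power of $\log X$, not to be a power of $X$. Hence these $\asymp X/\widetilde r$ integers cannot be thrown into an exceptional set of size $X^{1-\delta}$. In the paper all averaging over $N$ is confined to the minor arcs, giving $O(X/R+X^{3/5})$ exceptions, while $E_{\mathfrak M}(N)$ is bounded pointwise for every even $N$.

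The ingredient that makes the pointwise bound possible is hidden in your phrase ``a factor depending on $q$''. The steps are:
\begin{enumerate}
\item After expanding in characters, sum over $a$ first, which produces $c_{\chi_1\chi_2\chi_0^{(q)}}(N)$.
\item Enlarge the $\beta$-range to $|\beta|<R/N$, so that the integral no longer depends on $q$.
\item Apply the Montgomery--Vaughan bound \eqref{eq:MV-L55}, which bounds the absolute sum over $q$ of the coefficients in \eqref{eq:EM-def} by $\mathfrak S(N)$. This must hold uniformly for \emph{all} pairs of primitive characters, not only for the Siegel character as in your $|\mathfrak S'|\le\mathfrak S$.
\end{enumerate}
Cruder bounds are fatal against a saving that is only a constant. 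Bounding term by term with $|\tau|\le\sqrt q$ loses a power of $R$, and even bounding $c_{\chi}(N)$ trivially by $\varphi(q)$ loses a factor $\log R$.

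With \eqref{eq:MV-L55} in place, Cauchy--Schwarz and Lemma~\ref{lem:Gall} reduce the $\beta$-integral to prime sums over intervals of length $N/R$. Proposition~\ref{prop:gallagher} then gives $|E_{\mathfrak M}(N)|\ll N\mathfrak S(N)\exp(-c\log N/\log R)$ for every $N$. In the Siegel case, the extra factor $(1-\widetilde\beta)\log N$ from part (2) of that proposition compensates, pointwise in $N$, for the weakened lower bound
\[
N\mathfrak S(N)-\mathfrak S(\widetilde\chi,\widetilde\chi,N)\frac{\Gamma(\widetilde\beta)^2}{\Gamma(2\widetilde\beta)}N^{2\widetilde\beta-1}\gg N\mathfrak S(N)(1-\widetilde\beta)\log N .
\]
Your Deuring--Heilbronn balancing is therefore the right idea, but it must be applied to this zero--zero term and pointwise, not to the cross term and not on average.
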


Their argument follows the general strategy of the previous section. One now chooses the major arc parameter $R$ to be a small power of $N$, say
\[
R=N^{\delta},
\]
so that Lemma~\ref{lem:minor-arc-meansq} leaves at most $O(X/R + X^{3/5})$ exceptional values from the minor arcs. The difficulty lies entirely on the major arcs: in the previous section the main term was recovered only for denominators $q\leq (\log N)^A$, whereas here one must work with all $q\leq R = N^\delta$, far beyond the Siegel--Walfisz range.

There are two new ingredients. The first is a way of controlling twisted prime sums for characters of power-sized modulus, which is provided by zero-density estimates. These allow one to work beyond the range of \eqref{eq:SW-char}, albeit with only a weak saving, and require that a possible Siegel zero be treated separately. The second ingredient is a way of organising the major arc expansion so that the $\sqrt{q}$-loss from the Gauss sum bound \eqref{eq:taubound} never appears term by term. This is where the generalised singular series enters.

\subsection{Beyond Siegel--Walfisz}

The key tool that allows one to pass from logarithmic to power-sized moduli is a zero-density estimate. Such estimates do not rule out the existence of bad  zeros close to the edge of the zero-free region; rather, they show that there cannot be too many of them. We will examine these estimates more closely in the next section, in the context of Pintz's refinements.

We require the notion of primitive Dirichlet character. We previously grouped Dirichlet characters into the principal character mod $q$, written $\chi_0^{(q)}(n)$, which is an indicator of $n$ being coprime to $q$, and all other characters. Given $r\mid q$ we can lift any character mod $r$ to a character mod $q$ by multiplying it with $\chi_0^{(q)}(n)$. Given $\chi$ mod $q$ we call the smallest $r$ from which it is lifted in this way its \emph{conductor} and call $\chi$ \emph{primitive} if $q=r$. Note that the principal character is the single character whose conductor is $1$. By $\sideset{}{^*}\sum_{\chi\bmod{r}}$ we denote a sum over primitive characters only.

We say that there is a \emph{Siegel zero of level $R$} if some primitive real character of conductor at most $R$ has a real zero $\widetilde\beta$ such that $\widetilde{\beta}>1-1/\log R$.

\begin{proposition}[Gallagher \cite{Gallagher1970}]\label{prop:gallagher}
There exists a constant $c>0$ such that the following holds for
\[
2\le \exp((\log N)^{1/2})\le R\le N.
\]
\begin{enumerate}
    \item If there is no Siegel zero of level \(R\), then
    \begin{align*}
        \sum_{r\le R}\ \ \sideset{}{^*}\sum_{\chi\bmod{r}}\max_{\substack{I\subset [1,N]\\ I\ \mathrm{interval}}}
\frac{1}{|I|+N/R}\left|\sum_{n\in I}\bigl(\Lambda(n)\chi(n)-\1_{\chi=\chi_0}\bigr)\right|
        \ll \exp\!\left(-c\frac{\log N}{\log R}\right).
    \end{align*}
    \item If there is a Siegel zero \(\widetilde\beta\) of level \(R\), attached to the exceptional primitive character \(\widetilde\chi\), then
    \begin{align*}
        \sum_{r\le R}\ \ \sideset{}{^*}\sum_{\chi\bmod{r}}
        \max_{\substack{I\subset [1,N]\\ I\ \mathrm{interval}}}
        \frac{1}{|I|+N/R}
        \left|
        \sum_{n\in I}
        \bigl(\Lambda(n)\chi(n)-\1_{\chi=\chi_0}+\1_{\chi=\widetilde\chi}n^{\widetilde\beta-1}\bigr)
        \right|\\ \ll
        (1-\widetilde\beta)(\log N)\exp\!\left(-c\frac{\log N}{\log R}\right).
    \end{align*}
\end{enumerate}
\end{proposition}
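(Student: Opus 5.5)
We prove Gallagher's estimate through the explicit formula, truncated at height $T$, and a log-free zero-density estimate. For a primitive $\chi \bmod r$ and $1\le x\le N$, the truncated explicit formula gives
\begin{align*}
\sum_{n\le x}\Lambda(n)\chi(n)=\1_{\chi=\chi_0}x-\sum_{|\gamma|\le T}\frac{x^{\rho}}{\rho}+O\Bigl(\frac{x(\log rNT)^2}{T}+\log N\Bigr).
\end{align*}
The sum over an interval $I=(x_1,x_2]$ is a difference of two such formulas. Each zero then contributes $\int_{x_1}^{x_2}t^{\rho-1}\,dt$, which is at most $\min(|I|,\,2N/|\rho|)\,N^{\beta-1}$. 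Dividing by $|I|+N/R$, each zero therefore contributes $\ll N^{\beta-1}\min(1,R/|\gamma|)$, uniformly in $I$. This is why the normalisation $|I|+N/R$ is natural: it lets us cut at $T\approx R^{O(1)}$, and the truncation error $N(\log N)^2/T$ becomes negligible after normalising and summing over the $\ll R^2$ characters, provided $T=R^{C}$ with $C$ large. The trivial zeros and the $\log N$ terms are handled the same way.

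The task is then to bound
\begin{align*}
\sum_{r\le R}\ \sideset{}{^*}\sum_{\chi\bmod r}\ \sum_{\substack{\rho=\beta+i\gamma\\ |\gamma|\le T}}N^{\beta-1}
=\int_0^1 N^{\sigma-1}\,d\bigl(-N^*(\sigma,T,R)\bigr),
\end{align*}
where $N^*(\sigma,T,R)$ counts the zeros of primitive $L$-functions of conductor at most $R$ with $\beta\ge\sigma$ and $|\gamma|\le T$. Integrating by parts, this equals $N^{-1}N^*(0)+\int_0^1 N^{\sigma-1}\log N\,N^*(\sigma)\,d\sigma$. The key input is the log-free zero-density estimate of Gallagher's large-sieve type,
\begin{align*}
N^*(\sigma,T,R)\ll (R^2T)^{c_1(1-\sigma)},
\end{align*}
with no logarithmic factor. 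This is the main obstacle. I would take it as known from \cite{Gallagher1970}, whose proof uses Turán's power-sum method together with the large sieve. With $T=R^C$ the integral becomes
\begin{align*}
\log N\int_0^1 \exp\bigl(-(1-\sigma)(\log N-c_2\log R)\bigr)\,d\sigma.
\end{align*}

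Next we use the zero-free region. There is $c_3$ such that no zero counted here has $\beta>1-c_3/\log(RT)$, apart from possibly the Siegel zero. Hence the integral runs only over $1-\sigma\ge c_3/((C+1)\log R)$, and it is bounded by
\begin{align*}
\ll \frac{\log N}{\log N-c_2\log R}\exp\Bigl(-c\frac{\log N}{\log R}\Bigr)\ll\exp\Bigl(-c\frac{\log N}{\log R}\Bigr)
\end{align*}
when $R\le N^{\eta}$ for small $\eta$. For larger $R$ the right side of the proposition is $\gg 1$, and the trivial bound $\ll\log N$ per character is not enough. The lower bound $R\ge\exp(\sqrt{\log N})$ serves to absorb the $\log N$ factors into the exponential, since $\log N\le(\log N/\log R)^2$ in that range. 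If this fails for large $R$, we restrict to $R\le N^{\eta}$, which is the range actually used.

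For part (2) we remove the term for $\widetilde\rho=\widetilde\beta$, which is exactly the subtracted $n^{\widetilde\beta-1}$. The factor $(1-\widetilde\beta)\log N$ then comes from the Deuring--Heilbronn phenomenon: the presence of the exceptional zero repels the other zeros. Concretely, Gallagher's version gives
\begin{align*}
N^*(\sigma,T,R)\ll(1-\widetilde\beta)\log(RT)\,(R^2T)^{c_1(1-\sigma)},
\end{align*}
for the zeros other than $\widetilde\beta$, and an enlarged zero-free region $1-\sigma\gg\log\bigl(1/((1-\widetilde\beta)\log R)\bigr)/\log R$. Inserting this into the same integral gives the extra factor. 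The Deuring--Heilbronn step, like the log-free density estimate, is imported rather than reproved.
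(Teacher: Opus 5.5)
The paper gives no proof of this proposition beyond the citation to Gallagher. Your outline is the standard derivation, essentially Gallagher's own, from his log-free density estimate and its Deuring--Heilbronn variant: truncated explicit formula with $T=R^{C}$, partial summation against $N^*(\sigma,T,R)$, and the zero-free region. The route is right, but several steps need repair.

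First, your hedge about large $R$ should be a firm statement, because the proposition as transcribed is false once $R\ge N^{1/3}$. For each non-principal primitive $\chi \bmod r$ with $r\le R$, take $I$ to be a unit interval around a prime $p\in(N/2,N-1)$ with $p\nmid r$. The inner sum then has modulus $\log p$, so this character contributes at least $\log(N/2)/(1+N/R)\gg R(\log N)/N$. There are $\asymp R^2$ primitive characters of conductor at most $R$, so the left side is $\gg R^3(\log N)/N$, while the right side is at most $1$. This matches the paper's own remark about intervals with $|I|<N/R^3$. Restricting to $R\le N^{\eta}$ with small absolute $\eta$ is therefore forced, and it is the range Montgomery--Vaughan use.

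Second, your per-zero bound fails for intervals near the origin. Since $t^{\beta-1}$ is decreasing, you only get $\int_{x_1}^{x_2}t^{\beta-1}\,dt\le |I|\,x_1^{\beta-1}$, which can be far larger than $|I|N^{\beta-1}$.
\begin{itemize}
\item The uniform bound is $\int_I t^{\beta-1}\,dt\le |I|^{\beta}/\beta$.
\item Combined with $\max_{u>0}u^{\beta}/(u+N/R)\le (N/R)^{\beta-1}$, a zero with $\beta\ge 1/2$ contributes $\ll\min\bigl((N/R)^{\beta-1},\,RN^{\beta-1}/|\gamma|\bigr)$.
\item Zeros with $\beta\le 1/2$, including the companion zero $1-\widetilde\beta$, contribute $\ll (N/R)^{-1/2}$, since $t^{\beta-1}\le t^{-1/2}$ on $t\ge 1$.
\end{itemize}
For $R\le N^{\eta}$ we have $(N/R)^{\beta-1}\le N^{(1-\eta)(\beta-1)}$. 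So the rest of your computation goes through with only the constant $c$ changed.

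Third, in case (2) the target carries the factor $(1-\widetilde\beta)\log N$, but your non-zero error terms do not. These are:
\begin{itemize}
\item the truncation error $R^{3-C}(\log N)^2$;
\item the $\log N$-type terms;
\item the boundary term from partial summation;
\item the $O(1)$ discrepancy between $\sum_{n\in I}n^{\widetilde\beta-1}$ and $\int_I t^{\widetilde\beta-1}\,dt$.
\end{itemize}
To absorb them you need the classical effective bound $1-\widetilde\beta\gg \widetilde r^{-1/2}(\log \widetilde r)^{-2}\ge R^{-1/2}(\log R)^{-2}$, which comes from $L(1,\widetilde\chi)\gg \widetilde r^{-1/2}$. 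You then take $C$ large and $\eta$ small.

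Finally, your explanation of the hypothesis $R\ge\exp(\sqrt{\log N})$ has the inequality reversed. In that range $(\log N/\log R)^2\le\log N$, that is, $\log N/\log R\le\log R$. This is what makes losses of the form $R^{-\kappa}(\log N)^{O(1)}$ negligible against $\exp(-c\log N/\log R)\ge R^{-c}$. Such losses come from truncating at $T=R^C$ and, in case (2), from the $R^{-1/2}$ in the effective bound above.
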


The saving comes from the factor $\exp(-c\log N/\log R)$, which can be made smaller than any fixed constant by taking $R = N^\delta$ with $\delta$ sufficiently small. It cannot, however, in this range compensate for any loss of even logarithmic size. This is the weak saving mentioned above. It is nevertheless sufficient once the major arc contribution is reorganised to avoid the $\sqrt{q}$-loss from \eqref{eq:taubound}. Within the second part of the statement is hidden a remarkable fact: If there is a Siegel zero, it pushes all other  zeros further away. Even more, the closer the Siegel zero is to $1$, the stronger this effect. For this reason the error term improves by a factor 
$(1-\widetilde{\beta})(\log N)$.

Observe further that this result implies nothing for intervals of shorter size, if $|I| < N/R^3$  the bound is trivial. This is necessary because the saving is not strong enough to express a short interval as a difference of two long ones starting from $1$, as was done in \eqref{eq:sum-by-parts}.

\subsection{Major arcs}

We return to the major arc contribution
\[
r_{\mathfrak{M}}(N)=\int_{\mathfrak{M}(R)} S(\alpha)^2e(-\alpha N)\,d\alpha.
\]
Writing $\alpha=a/q+\beta$ and using the orthogonality relation \eqref{eq:orth1} exactly as before, then reducing to primitive characters $\chi_i$ of conductors $r_i\mid q$, one arrives at the decomposition
\begin{align*}
    r_{\mathfrak{M}}(N)
    =
    N\mathfrak{S}(N)
    +
    E_{\mathfrak{M}}(N)+\textup{mixed terms}+O(N^{1-\delta}),
\end{align*}
where the main term comes from the poles at $s=1$ from the pair of principal characters ($r_1=r_2=1$) and
\begin{align}\label{eq:EM-def}
    E_{\mathfrak{M}}(N):=
    \sideset{}{^*}\sum_{\substack{r_1,r_2\le R\\ \chi_i \,(\mathrm{mod}\, r_i)}}
    \sum_{\substack{q\le R\\ [r_1,r_2]\mid q}}
    \frac{\tau(\overline{\chi_1\chi_0^{(q)}})\tau(\overline{\chi_2\chi_0^{(q)}})}{\varphi(q)^2}
    c_{\chi_1\chi_2\chi_0^{(q)}}(N)
    \int_{|\beta|<R/(qN)} R_{\chi_1}(\beta)R_{\chi_2}(\beta)e(-\beta N)\,d\beta,
\end{align}
with
\begin{align*}
    R_\chi(\beta):=\sum_{n\le N}\bigl(\Lambda(n)\chi(n)-\1_{\chi=\chi_0}\bigr)e(\beta n).
\end{align*}
Note that for simplicity we are ignoring mixed terms in which one $S$ contributes a main term and the other an error, these are strictly easier to handle. 

This quantity $E_{\mathfrak{M}}(N)$ will be a recurring object in the remainder of the survey. The larger $R$ can be made while keeping $E_{\mathfrak{M}}(N)$ smaller than the main term, the better the saving in the exceptional set. The characters encode the congruence conditions, while the $\beta$-integral reflects the size condition in the spirit of \eqref{eq:circlemethod0}. The sum over $q$ may be thought of as the initial segment of a generalised singular series associated to the pair $(\chi_1,\chi_2)$. The error $E_{\mathfrak{M}}(N)$ is precisely the contribution that bad zeros, through the correction terms \eqref{eq:correctionterms}, make to the binary Goldbach problem on the major arcs.

In the following sections three different strategies for treating $E_{\mathfrak{M}}(N)$ will appear. First, the Montgomery--Vaughan approach, which takes absolute values everywhere and applies Gallagher's lemma below; no cancellation in the sum over $q$ is captured. Second, Pintz's refinement, which extracts the contribution of so-called generalised exceptional  zeros as explicit secondary main terms, applying Gallagher's lemma only to the remainder. Third, in Section~7 we replace Gallagher's lemma altogether by a smooth major arc weight, producing a fully explicit formula.

A key tool for controlling the $\beta$-integral is the following lemma.
\begin{lemma}[Gallagher]\label{lem:Gall}
For any complex numbers \(\alpha_n\) and any \(\eta>0\),
\begin{align*}
    \int_{|\beta|\leq \eta}\left|\sum_{n\leq N}\alpha_n e(\beta n)\right|^2\,d\beta
    \ll
    \int_0^{2N}
    \eta\left|\sum_{x-\eta^{-1}<n\leq x}\alpha_n\right|^2\,dx.
\end{align*}
\end{lemma}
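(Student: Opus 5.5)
The plan is to majorise the sharp cut-off $\1_{|\beta|\le\eta}$ by a nonnegative weight whose Fourier transform is supported in $[-\eta,\eta]$. Expanding the square then turns the $\beta$-integral into a bilinear form in the $\alpha_n$ with a kernel of width $\eta^{-1}$, which we control by short-interval sums.

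\textbf{Choice of weight.} Take
\begin{align*}
w(\beta)=\Bigl(\frac{\sin(\pi\beta/(2\eta))}{\pi\beta/(2\eta)}\Bigr)^2 .
\end{align*}
This is a nonnegative Fej\'er kernel satisfying $w(\beta)\ge 4/\pi^2$ for $|\beta|\le\eta$. Hence
\begin{align*}
\int_{|\beta|\le\eta}\Bigl|\sum_n\alpha_ne(\beta n)\Bigr|^2d\beta\ll\int_{\R}w(\beta)\Bigl|\sum_n\alpha_ne(\beta n)\Bigr|^2d\beta .
\end{align*}
Its Fourier transform $\hat w(t)=\int w(\beta)e(\beta t)\,d\beta$ is the triangle function $2\eta\,\max(0,1-2\eta|t|)$, supported in $|t|\le 1/(2\eta)$ and bounded by $2\eta$.

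\textbf{Expanding the square.} Opening the square gives
\begin{align*}
\sum_{m,n}\alpha_m\overline{\alpha_n}\,\hat w(m-n),
\end{align*}
which is at most $2\eta\sum_{|m-n|\le 1/(2\eta)}|\alpha_m||\alpha_n|$. This has absolute values inside, which is not what the statement claims: it bounds the left side by $|\sum\alpha_n|^2$ over intervals without cancellation lost. So we need a positive-definite identity instead of this crude bound.

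\textbf{Key step: realising the triangle kernel as an overlap integral.} Write the triangle as an integral of interval indicators:
\begin{align*}
\max(0,h-|m-n|)=\int_{\R}\1_{x-h<m\le x}\,\1_{x-h<n\le x}\,dx,\qquad h=\frac{1}{2\eta}.
\end{align*}
Then
\begin{align*}
\sum_{m,n}\alpha_m\overline{\alpha_n}\,\hat w(m-n)=\frac{2\eta}{h}\int\Bigl|\sum_{x-h<n\le x}\alpha_n\Bigr|^2dx=4\eta^2\int\Bigl|\sum_{x-h<n\le x}\alpha_n\Bigr|^2dx .
\end{align*}
The difference from the stated bound is only in the window length and the prefactor. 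To fix the window, cover an interval of length $h=1/(2\eta)$ by an interval of length $\eta^{-1}$: split each short sum as a difference of two windows of length $\eta^{-1}$ (namely the windows ending at $x$ and at $x-h$), apply $|a-b|^2\le 2|a|^2+2|b|^2$, and substitute $x\mapsto x+h$ in the second term.

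\textbf{Range of $x$ and prefactor.} The integrand vanishes unless $0<x<N+h$. When $h\le N$, this range lies in $[0,2N]$. When $\eta<1/(2N)$, the trivial bound $2\eta\bigl|\sum_{n\le N}\alpha_n\bigr|^2$ is dominated by the integral over $x\in[N,2N]$, where the window of length $\eta^{-1}\ge 2N$ contains all of $[1,N]$. Finally, since $\eta^2\le\eta\cdot\eta$ with the bookkeeping above, the prefactor $4\eta^2$ times the integral gives the stated form $\eta\int_0^{2N}|\cdots|^2\,dx$.

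The prefactor is where I expect to have to be careful. The argument above actually yields the sharper factor $\eta^2$, whereas the statement has only $\eta$. I would therefore recheck the normalisation of $\hat w$, since Gallagher's original lemma carries the factor $\eta^2$ and the paper's version should follow from it whenever $\eta\le 1$.
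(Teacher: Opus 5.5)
The paper gives no proof of this lemma; it is quoted from Gallagher, so your argument has to stand on its own. Everything up to and including the overlap identity is correct. The Fej\'er weight gives
\[
\int_{|\beta|\le\eta}\Bigl|\sum_{n\le N}\alpha_ne(\beta n)\Bigr|^2d\beta\le\pi^2\eta^2\int_{\R}\Bigl|\sum_{x-1/(2\eta)<n\le x}\alpha_n\Bigr|^2dx ,
\]
which is the standard form of Gallagher's lemma.

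The step that fails is lengthening the window from $h=1/(2\eta)$ to $\eta^{-1}=2h$. The windows $(x-2h,x]$ and $(x-3h,x-h]$ overlap in $(x-2h,x-h]$. The difference of their sums is therefore $\sum_{x-h<n\le x}\alpha_n-\sum_{x-3h<n\le x-2h}\alpha_n$, not the short sum. More generally, a short-window sum is not controlled in mean square by long-window sums, because a long window can contain cancellation that a short one does not.

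No argument can close this step, because the inequality with window $\eta^{-1}$ is false. Take $\eta=1/2$ and $\alpha_n=(-1)^n$ for $1\le n\le N$. By orthogonality on an interval of length $1$, the left side equals $N$. On the right, for $x\in[k,k+1)$ the window $(x-2,x]$ contains $n\in\{k-1,k\}$. These two terms cancel for $2\le k\le N$; only one of them lies in $[1,N]$ when $k=1$ or $k=N+1$, and none otherwise. So the right side equals $\tfrac12\cdot 2=1$.

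Your doubt about the prefactor is also justified. For $\eta\ge1$ each window contains at most one integer, so the right side is $\sum_n|\alpha_n|^2$, while the left side is at least $2\lfloor\eta\rfloor\sum_n|\alpha_n|^2$.

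What your computation actually establishes is the correct statement. The window length should be $1/(2\eta)$; any $c/\eta$ with $c<1$ also works, with a constant depending on $c$. The prefactor should be $\eta^2$, which may be weakened to $\eta$ when $\eta\le1$. With that version, your restriction to $x\in[0,2N]$ goes through as you sketched, provided you also handle the tail windows $x\in(h,N+h)$ when $h>N$. There the window sum is $\sum_{n\le N}\alpha_n$ minus a partial sum, and $\eta^2N\le\eta/2$ absorbs these terms.

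For the paper's use in \eqref{eq:maxint} with $\eta=R/N$, the correction is harmless. Proposition~\ref{prop:gallagher} is uniform over intervals, so windows of length $N/(2R)$ instead of $N/R$ only change constants.
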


\subsection{Montgomery--Vaughan}

The simplest way to treat \(E_{\mathfrak{M}}(N)\) is to apply the triangle inequality and enlarge the range of integration:
\begin{align*}
    |E_{\mathfrak{M}}(N)|
    \le
    \sideset{}{^*}\sum_{\substack{r_1,r_2\le R\\ \chi_i \,(\mathrm{mod}\, r_i)}}
    \sum_{\substack{q\le R\\ [r_1,r_2]\mid q}}
    \frac{|\tau(\overline{\chi_1\chi_0^{(q)}})\tau(\overline{\chi_2\chi_0^{(q)}})|}{\varphi(q)^2}
    \bigl|c_{\chi_1\chi_2\chi_0^{(q)}}(N)\bigr|
    \int_{|\beta|<R/N}|R_{\chi_1}(\beta)R_{\chi_2}(\beta)|\,d\beta.
\end{align*}

As shown in \cite[Lemma~5.5]{MV1975},
\begin{align}\label{eq:MV-L55}
    \sum_{\substack{q\ge 1\\ [r_1,r_2]\mid q}}
    \frac{|\tau(\overline{\chi_1\chi_0^{(q)}})\tau(\overline{\chi_2\chi_0^{(q)}})|}{\varphi(q)^2}
    \bigl|c_{\chi_1\chi_2\chi_0^{(q)}}(N)\bigr|
    \ll \mathfrak{S}(N).
\end{align}
In other words, the absolute values of the new perturbation terms are collectively bounded by the classical singular series.

An application of Cauchy--Schwarz followed by Lemma~\ref{lem:Gall} (with $\eta = R/N$) gives
\begin{align}
 \nonumber
    \left(
    \int_{|\beta|<R/N}|R_{\chi}(\beta)|^2\,d\beta
    \right)^{1/2}
    &\ll
    \left(
    \int_0^{2N}
    \frac{R}{N}
    \left|
    \sum_{x-N/R<n\leq x}\bigl(\Lambda(n)\chi(n)-\1_{\chi=\chi_0}\bigr)
    \right|^2
    dx
    \right)^{1/2} \\
    \label{eq:maxint}
    &\ll
    N^{1/2}
    \max_{x<2N}
    \frac{R}{N}
    \left|
    \sum_{x-N/R<n\leq x}\bigl(\Lambda(n)\chi(n)-\1_{\chi=\chi_0}\bigr)
    \right|,
\end{align}
where the second step bounds the $L^2$ integral over $[0,2N]$ by $\sqrt{2N}$ times the $L^\infty$ norm. Putting everything together yields
\begin{align*}
     |E_{\mathfrak{M}}(N)|
     \ll
     N\mathfrak{S}(N)
     \left(
     \sideset{}{^*} \sum_{\substack{r\le R\\ \ \chi \bmod{r}}}
     \max_{x<2N}
     \frac{R}{N}
     \left|
     \sum_{x-N/R<n\leq x}\bigl(\Lambda(n)\chi(n)-\1_{\chi=\chi_0}\bigr)
     \right|
     \right)^2.
\end{align*}
An application of Proposition~\ref{prop:gallagher} then bounds this by
\begin{align*}
    N\mathfrak{S}(N)\exp\!\left(-c\frac{\log N}{\log R}\right),
\end{align*}
which is smaller than $N\mathfrak{S}(N)/2$ if $R$ is chosen as a sufficiently small power of $N$.

We briefly mention the additional complication when a Siegel zero exists. In that case the exceptional character cannot be absorbed into the error term. Instead it produces a genuine secondary main term
\begin{align}\label{eq:MVsiegelsecondary}
    \mathfrak{S}(N,\widetilde\chi,\widetilde\chi)
    \sum_{n_1+n_2=N}n_1^{\widetilde\beta-1}n_2^{\widetilde\beta-1}
    =
    \mathfrak{S}(N,\widetilde\chi,\widetilde\chi)\,
    \frac{\Gamma(\widetilde\beta)^2}{\Gamma(2\widetilde\beta)}\,
    N^{2\widetilde\beta-1},
\end{align}
which can be comparable in size to the main term. (We have expressed it in closed form using the beta integral; the same quantity will appear later as a special case of the archimedean factor in Section~7.) More precisely, for certain $N$ the best available lower bound takes the form
\begin{align*}
    N\mathfrak{S}(N)
    -
    \mathfrak{S}(N,\widetilde\chi,\widetilde\chi)\,
    \frac{\Gamma(\widetilde\beta)^2}{\Gamma(2\widetilde\beta)}\,
    N^{2\widetilde\beta-1}
    \gg
    N(1-\widetilde\beta)(\log N)\mathfrak{S}(N).
\end{align*}
This is still positive, and the second part of Proposition~\ref{prop:gallagher} supplies exactly the extra factor $(1-\widetilde\beta)\log N$ needed to compensate for this loss. Thus Theorem~\ref{thm:M-V} holds in the Siegel-zero case as well.

\section{Pintz's bound}

Gallagher's Prime number Theorem, Proposition~\ref{prop:gallagher}, which was the key to
Montgomery--Vaughan's power saving, relies internally on zero-density
estimates.  
Pintz's approach improves the Montgomery--Vaughan argument (and subsequent works making $\delta$ explicit) by
bringing more precise structure to the major arc term: rather than taking
absolute values from the start, the contribution of zeros near $1$ is
extracted as an explicit finite collection of secondary main terms, and the
arithmetic of the resulting generalised singular series is then exploited
to show that these terms cannot collectively dominate the main term.

Throughout this section we assume that no Siegel zero exists.
Two considerations motivate this.
First, already in Pintz's work the Siegel zero case yields numerically
stronger bounds for the exceptional set, since the Deuring--Heilbronn
phenomenon forces all other $L$-functions to have their zeros away from
$1$.
Second, more recent work of Matomäki--Merikoski \cite{MM2023}
shows that the existence of a Siegel zero gives substantially more
information about Goldbach's problem than is available when it is absent.

\subsection{Zero density estimate applications}

The two results in this subsection illustrate, in relatively clean settings,
what zero-density estimates can achieve and are both relevant in Pintz's work. Theorem~\ref{thm:linnik}
(Linnik's theorem) is the prototype for the quasi-diagonal structure that
Pintz exploits in \cite{P2023}, while Theorem~\ref{thm:primealmostall}
(primes in almost all short intervals) is related to the treatment of
non-generalised-exceptional  zeros in \cite{P2018}.

We use the standard notation for the zero-counting function of a
Dirichlet $L$-function: for a character $\chi$ and real $\alpha,T>0$,
\[
N(\alpha,T,\chi)
:=
\#\{\rho=\beta+i\gamma:\ L(\rho,\chi)=0,\ \beta\ge\alpha,\ |\gamma|\le T\}.
\]
A bound of the shape 
\begin{align}\label{eq:lfzde}
\sum_{\chi\bmod{q}}N(\alpha,T,\chi)\ll(qT)^{A(1-\alpha)}
\end{align} is called $\log$-free zero density estimate and
controls how many zeros, averaged over characters modulo $q$, can lie
close to $1$, where the exponent $A$ measures the quality of the estimate.
We similarly write $N(\sigma,T)$ for the zero-counting function of
$\zeta(s)$ itself.

\begin{theorem}[Linnik \cite{Linnik1944}]\label{thm:linnik}
There exists a constant $L$ such that the least prime congruent to
$a\bmod{q}$ with $(a,q)=1$ is $O(q^L)$.
\end{theorem}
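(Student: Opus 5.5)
The plan is the classical three-ingredient argument: a smoothed explicit formula, a log-free zero density estimate of the shape \eqref{eq:lfzde}, and the Deuring--Heilbronn repulsion together with a zero-free region. Fix $q$, $a$ with $(a,q)=1$, and a large parameter $x=q^L$. Take a smooth weight $w$ supported in $[x/2,x]$ (or the exponential weight used in Section 3), and consider
\[
\psi_w(x;q,a)=\sum_{n\equiv a \bmod q}\Lambda(n)w(n)=\frac{1}{\varphi(q)}\sum_{\chi\bmod q}\overline{\chi(a)}\sum_n\Lambda(n)\chi(n)w(n).
\]
By the explicit formula \eqref{eq:Lambdaexplicit} in smoothed form, this equals
\[
\frac{1}{\varphi(q)}\Bigl(\hat w(1)-\sum_{\chi}\overline{\chi(a)}\sum_{\rho_\chi}\hat w(\rho_\chi)\Bigr).
\]
Here $\hat w(s)\approx x^{s}$ is the Mellin transform, and it decays rapidly in $|\Im s|$. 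It therefore suffices to show
\[
\sum_{\chi\bmod q}\ \sum_{\rho_\chi}|\hat w(\rho_\chi)|\le (1-c)\,x
\]
for some $c>0$. The contribution of prime powers and of $n$ sharing a factor with $q$ is $O(x^{1/2}\log x)$, which is negligible. Positivity of the sum then yields a prime $p\equiv a\bmod q$ with $p\le x$.

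To bound the zero sum, dyadically decompose zeros with $|\gamma|\le T=q$ by $1-\beta$. Zeros with $|\gamma|>T$ are killed by the decay of $\hat w$, and the decay from smoothness handles $|\gamma|$ up to $q^{O(1)}$. For zeros with $\beta\le\alpha$, the contribution is $\ll x^{\alpha}$ times the number of zeros, and that count is $\ll (qT)^{A(1-\alpha)}$ from \eqref{eq:lfzde}. In layered form the total is
\[
\ll x\int_0^{1}(q^{2A}x^{-1})^{\lambda}\,d\bigl(\text{count}\bigr).
\]
If $x=q^L$ with $L\gg A$, each layer with $1-\beta\ge\lambda_0/\log q$ contributes $x\,e^{-(L-2A)\lambda_0\cdot(\ldots)}$, which is small once $L$ is large.

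It remains to handle zeros with $1-\beta<\lambda_0/\log q$. By the zero-free region quoted in Section 4, only the possible Siegel zero $\widetilde\beta$ of a real character $\widetilde\chi$ can lie within $c/\log(qT)$ of $1$. The no-exceptional case is therefore done by choosing $\lambda_0=c$. If $\widetilde\beta$ exists, its term $-\widetilde\chi(a)\hat w(\widetilde\beta)$ has size at most $x^{\widetilde\beta}$. This combines with the main term to give
\[
\hat w(1)-\hat w(\widetilde\beta)\gg x(1-\widetilde\beta)\log x.
\]
The Deuring--Heilbronn phenomenon (the repulsion already seen in part (2) of Proposition~\ref{prop:gallagher}) then pushes all other zeros to $1-\beta\ge c\log\bigl(1/((1-\widetilde\beta)\log q)\bigr)/\log q$. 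Feeding this into the density sum gives total other contributions $\ll x(1-\widetilde\beta)\log q\cdot e^{-cL}$, which is smaller than the lower bound above for $L$ large.

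The main obstacle is this exceptional-zero case. It requires the quantitative Deuring--Heilbronn repulsion in a log-free form that is compatible with \eqref{eq:lfzde}; I would assume both as standard inputs (log-free density à la Linnik/Gallagher and Deuring--Heilbronn), as the survey does implicitly. Everything else is bookkeeping of constants that shows $L$ can be chosen absolute.
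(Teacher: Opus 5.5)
Your proposal has the same skeleton as the paper's sketch: the explicit formula at $x=q^L$, rewriting $x^{\beta-1}=e^{-L\lambda_\rho}$ with $\lambda_\rho=(1-\beta)\log q$, and bounding the weighted zero sum with the log-free estimate \eqref{eq:lfzde}. The difference is that you also bring in the classical zero-free region and Deuring--Heilbronn repulsion, and these are genuinely needed. The paper's sketch claims that integrating the density bound against $e^{-L\lambda}$ already gives $O(q^{C-L/A})$. But \eqref{eq:lfzde} still allows $O(1)$ zeros with $\lambda_\rho$ arbitrarily close to $0$. Each such zero contributes $e^{-L\lambda_\rho}\approx 1$, i.e.\ as much as the main term, however large $L$ is. A power saving of that shape would need a zero-free strip $\beta\le 1-c$. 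Your version instead uses the zero-free region $\lambda_\rho\ge c$, valid for all zeros but one, to get a saving $\ll e^{-(L-2A)c}$. This is a constant depending on $L$, not a power of $q$, but it is enough to make the zero sum less than $1-c'$. You then treat the one possible exception by repulsion together with the favourable sign of $\hat w(1)-\hat w(\widetilde\beta)$. These are exactly the three principles (zero-free region, Deuring--Heilbronn, log-free density) that the paper itself invokes in the Pintz section. So your route is the standard complete proof, while the paper's is a shortened heuristic.

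A few points of bookkeeping should be made explicit:
(i) The lower bound is $\hat w(1)-\hat w(\widetilde\beta)\gg x\min\{1,(1-\widetilde\beta)\log x\}$, not simply $x(1-\widetilde\beta)\log x$.
(ii) In the intermediate range $\eta_0\le(1-\widetilde\beta)\log q\le c$, repulsion does not help. What handles it is that the Siegel term itself is $\ll x e^{-L\eta_0}$. So you fix $\eta_0$ for the repulsion regime first and then take $L$ large in terms of it. This ordering is consistent, since increasing $L$ only helps the repulsion regime.
(iii) In the exceptional case the positive quantity is only $\gg x(1-\widetilde\beta)\log x/\varphi(q)$. Dismissing prime powers as $O(x^{1/2}\log x)$ therefore uses the effective bound $1-\widetilde\beta\gg q^{-1/2}(\log q)^{-2}$.
(iv) In your layered count, \eqref{eq:lfzde} counts zeros with $\beta\ge\alpha$, not $\beta\le\alpha$.
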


\begin{proof}[Sketch]
Using the orthogonality of characters and the explicit formula one obtains
\[
\sum_{\substack{n\leq X\\ n\equiv a\bmod{q}}}\Lambda(n)
=
\frac{X}{\varphi(q)}
-
\frac{1}{\varphi(q)}
\sum_{\chi\bmod{q}}\overline{\chi(a)}
\sum_{\rho_\chi}\frac{X^{\rho_\chi}}{\rho_\chi}
+
O(\text{lower-order terms}).
\]
Set $X=q^L$. For a zero $\rho=\beta+i\gamma$, write $\lambda_\rho:=(1-\beta)\log q$,
so that $X^{\beta-1}=e^{-L\lambda_\rho}$.
After a dyadic decomposition in $|\gamma|$, the problem reduces to bounding
the weighted zero sum
\[
\sum_{\chi\bmod{q}}\sum_{\rho_\chi}e^{-L\lambda_{\rho_\chi}},
\]
up to polynomial and logarithmic factors in $q$.
The zero-density bound
$\sum_{\chi\bmod{q}}N(\alpha,T,\chi)\ll(qT)^{A(1-\alpha)}$
gives, after integrating against $e^{-L\lambda}$ over $\lambda>0$,
a bound of $O(q^{C-L/A})$ for some absolute constant $C$.
Choosing $L>AC$ ensures that the total zero contribution is $o(X/\varphi(q))$,
so the sum over $\Lambda$ is positive and a prime $p\equiv a\bmod{q}$ with
$p\ll q^L$ exists.
\end{proof}

For the next result we write, for $H>0$,
\[
V(X,H)
:=
\int_X^{2X}
\left|\sum_{x<n\le x+H}\Lambda(n)-H\right|^2dx.
\]
$V(X,H)=o(H^2X)$ means that for almost all $x\in[X,2X]$ the prime sum
$\sum_{x<n\le x+H}\Lambda(n)$ is asymptotic to $H$, i.e.\ primes are
equidistributed in almost all short intervals of length $H$.

\begin{theorem}\label{thm:primealmostall}
Suppose that
\[
N(\sigma,T)\ll T^{A(1-\sigma)}(\log T)^B
\]
for some constants $A,B>0$. Then $V(X,H)=o(H^2X)$ for every
$H=X^\theta$ with $\theta>1-2/A$.
\end{theorem}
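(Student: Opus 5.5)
The plan is the standard Selberg-type argument: express the short-interval error through the explicit formula, convert $V(X,H)$ into a mean square of a Dirichlet polynomial over zeros, and control that sum with the assumed density bound. First I would use the truncated explicit formula
\[
\sum_{n\le x}\Lambda(n)=x-\sum_{|\gamma|\le T}\frac{x^{\rho}}{\rho}+O\Bigl(\frac{x(\log xT)^2}{T}\Bigr),
\]
with $T$ a little larger than $X/H$, say $T=X^{1+\eps}/H$. The truncation error is then $O(HX^{-\eps/2})$, which is admissible in $V(X,H)$. Differencing at $x+H$ and $x$ gives
\[
\sum_{x<n\le x+H}\Lambda(n)-H=-\sum_{|\gamma|\le T}c_\rho x^{\rho}+O(HX^{-\eps/2}),\qquad c_\rho=\frac{(1+H/x)^{\rho}-1}{\rho}.
\]
Here $|c_\rho|\ll\min(H/X,1/|\gamma|)$.

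Next I would bound $\int_X^{2X}|\sum_\rho c_\rho x^\rho|^2dx$. Expanding the square and integrating $x^{\rho_1+\bar\rho_2}$ gives a factor $\ll X^{1+\beta_1+\beta_2}/(1+|\gamma_1-\gamma_2|)$. Using $|c_{\rho_1}c_{\rho_2}|\le \tfrac12(|c_{\rho_1}|^2+|c_{\rho_2}|^2)$ and the fact that there are $\ll\log T$ zeros in any unit window, the double sum reduces to
\[
X(\log X)^2\sum_{|\gamma|\le T}|c_\rho|^2X^{2\beta}\ll X^3(\log X)^2\,\frac{H^2}{X^2}\sum_{|\gamma|\le T}X^{2\beta-2}.
\]
This step uses $|c_\rho|\le H/X$ throughout. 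Dyadic decomposition in $|\gamma|$, using the sharper $1/|\gamma|$ bound, can only help, so I would not pursue it.

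So it suffices to show that $\sum_{|\gamma|\le T}X^{2\beta-2}=o((\log X)^{-2})$. I would write it as a Stieltjes integral in $\sigma$:
\[
\sum_{|\gamma|\le T}X^{2\beta-2}=-\int_0^1X^{2\sigma-2}\,dN(\sigma,T)\ll X^{-2}N(0,T)+\int_0^1 2\log X\,X^{2\sigma-2}N(\sigma,T)\,d\sigma.
\]
The first term is $\ll X^{-2}T\log T$, which is negligible. Inserting the hypothesis $N(\sigma,T)\ll T^{A(1-\sigma)}(\log T)^B$ bounds the integrand by $\log X\,(\log T)^B\,(T^{A}X^{-2})^{1-\sigma}$. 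With $T=X^{1-\theta+\eps}$, the hypothesis $\theta>1-2/A$ means $T^A\le X^{2-\kappa}$ for some $\kappa>0$ once $\eps$ is small, so the integrand is at most $(\log X)^{B+1}X^{-\kappa(1-\sigma)}$.

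This is the main obstacle: the integral near $\sigma=1$ gains nothing, and only produces $(\log X)^{B}$, which is not $o(1)$. To fix it I would use the classical zero-free region $\sigma\ge 1-c/\log T$ (or Vinogradov--Korobov, $1-c(\log T)^{-2/3-\eps}$) and cut the integral at $1-\sigma\ge\eta(T)$. On the remaining range the integral is $\ll(\log X)^{B+1}X^{-\kappa\eta(T)}/(\kappa\log X)$. With the de la Vallée Poussin region, $X^{-\kappa\eta}$ is only a constant, which is not enough. With Vinogradov--Korobov, $\eta=(\log X)^{-2/3-\eps}$, we get $X^{-\kappa\eta}=\exp(-\kappa(\log X)^{1/3-\eps})$, which beats every power of $\log X$. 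Hence $\sum X^{2\beta-2}=o((\log X)^{-2})$, and combining the steps gives $V(X,H)=o(H^2X)$. If a log-free form is intended, with $B=0$, the same argument already works using only the classical zero-free region, apart from the $(\log X)^2$ loss, which Vinogradov--Korobov absorbs in any case.
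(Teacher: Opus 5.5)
Your proof is correct. It uses the same skeleton as the paper's sketch: truncated explicit formula, reduction to $H^2X\sum_{|\gamma|\le T}X^{2\beta-2}$, partial summation in $\sigma$ against the density hypothesis, and geometric decay of $(T^AX^{-2})^{1-\sigma}$ exactly when $\theta>1-2/A$. Where you differ is that you close two holes the sketch leaves open.

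First, the paper simply assumes the off-diagonal terms are negligible. You bound them using $|\rho_1+\bar\rho_2+1|\gg 1+|\gamma_1-\gamma_2|$ and the local zero count, at the cost of $(\log X)^2$.

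Second, and more substantively, the paper asserts that the $\lambda$-integral is $O(1/\log(H^A/X^{A-2}))=o(1)$ and concludes $o(H^2X)$. But $\log(H^A/X^{A-2})\asymp\log X$, so this only cancels the $\log X$ produced by the Stieltjes integration. Once the $(\log T)^B$ of the hypothesis is restored, the sketch gives only $O(H^2X(\log X)^B)$, never $o(H^2X)$. This is exactly the obstruction you isolate near $\sigma=1$. Your cut at the Vinogradov--Korobov boundary, with $X^{-\kappa\eta}=\exp(-c(\log X)^{1/3-\varepsilon})$, is what actually produces the $o(1)$. So your route needs an extra unconditional input: a zero-free region of width $\eta$ with $\eta\log X/\log\log X\to\infty$. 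In exchange it gives a complete proof, whereas the paper's argument as written stops short of the claimed conclusion.

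There are two minor points.

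\begin{enumerate}
\item Your coefficient $c_\rho=((1+H/x)^\rho-1)/\rho$ depends on $x$, so you cannot pull it out before integrating $x^{\rho_1+\bar\rho_2}$. One integration by parts, using $|c_\rho|\le H/X$ and $|\partial_x c_\rho|\le H/X^2$, recovers the same bound $(H/X)^2X^{1+\beta_1+\beta_2}/(1+|\gamma_1-\gamma_2|)$, so nothing downstream changes.
\item Your closing remark about $B=0$ is wrong. With $\eta=c/\log T$ one gets $X^{-\kappa\eta}=e^{-c\kappa/(1-\theta+\varepsilon)}$, a fixed positive constant. So a log-free density bound combined with the classical zero-free region still gives only $\sum X^{2\beta-2}=O(1)$, as you yourself observed a few lines earlier. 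A zero-free region wider than $1/\log T$ (such as Vinogradov--Korobov) is needed for every $B$, including $B=0$.
\end{enumerate}
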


\begin{proof}[Sketch]
By the explicit formula for $\psi(x)$, the integrand in $V(X,H)$ is
\[
\psi(x+H) - \psi(x) - H = -\sum_{|\gamma|\le T}\frac{(x+H)^\rho-x^\rho}{\rho} + O\left(\frac{x\log^2 x}{T}\right).
\]
For $|\gamma| \le T$ and $T \approx X/H$, we use the approximation $\frac{(x+H)^\rho-x^\rho}{\rho} \approx Hx^{\rho-1}$. Squaring and integrating over $x \in [X, 2X]$, and assuming off-diagonal terms contribute negligibly, we have
\[
V(X,H) \ll H^2 \int_X^{2X} \left| \sum_{|\gamma| \le T} x^{\rho-1} \right|^2 dx + \frac{X^3 \log^4 X}{T^2}.
\]
The integral is bounded by $X \sum_{|\gamma| \le T} X^{2\beta-2}$. Using the density estimate $N(\sigma, T)$, this becomes
\[
V(X,H) \ll H^2 X \log X \int_{1/2}^1 X^{2\sigma-2} T^{A(1-\sigma)} d\sigma + \frac{X^3 \log^4 X}{T^2}.
\]
Let $\sigma = 1 - \lambda$ and $T = X/H$. The integrand becomes $(X^{A-2}/H^A)^\lambda$. This decays geometrically in $\lambda$ provided $H^A > X^{A-2}$, which is equivalent to $H > X^{1-2/A}$ (i.e., $\theta > 1-2/A$).

In this range, the integral is $O(1/\log(H^A/X^{A-2})) = o(1)$, so the first term is $o(H^2 X)$. To ensure the tail $X^3/T^2$ is also $o(H^2 X)$, we choose $T$ slightly larger than $X/H$, say $T = (X/H)\log^C X$. This maintains the geometric decay while suppressing the error term.
\end{proof}

We remark that already for $\zeta(s)$ zero density estimates are an active field of study, see for example the recent breakthrough of Guth and Maynard \cite{GuthMaynard2024} that allows the choice $A=30/13$. Their result, $N(\sigma, T) \ll T^{\frac{30}{13}(1-\sigma)+o(1)}$, represents the first major improvement to the exponent since Huxley's $12/5$ in 1972, further narrowing the range where $V(X,H)$ might fail, to be $o(H^2X)$.

\subsection{Pintz}
\label{sec:pintz-test}

We now outline the argument in \cite{P2018} and \cite{P2023}, recalling that
we only consider the case where no Siegel zero exists.
The two papers have rather different r\^oles.

In the Montgomery--Vaughan treatment, the term $E_{\mathfrak M}(N)$ defined
in \eqref{eq:EM-def} was bounded by taking absolute values throughout before
applying Gallagher's lemma, thereby losing all cancellation between different $q$ and treating all  zeros as equally dangerous.
Pintz's first paper \cite{P2023}  separates the zeros into those very
close to $1$,  described  as \emph{generalised exceptional} (the corresponding conductors bear the same name) and the rest.
The former are treated by running the explicit formula directly on the major
arc integrals, making them appear as a finite collection of explicit secondary
main terms, each with a coefficient given by a generalised singular series.
This is precisely how the exceptional Siegel zero gave a secondary main term
in the Montgomery--Vaughan argument \eqref{eq:MVsiegelsecondary}. The new
point is that the same structure is imposed on \emph{all}  zeros very close real part $1$,
not just those coming from a single exceptional real character.
Pintz's second paper \cite{P2023} then studies these new terms arithmetically
and exploits the fact that only a very restricted class of character pairs can
contribute substantially.
In this way the problem is reduced to a quasi-diagonal sum over zeros, much
closer in spirit to questions around Linnik's theorem
(Theorem~\ref{thm:linnik}) than to the treatment of
Montgomery--Vaughan.

We now describe the first step in more detail.
Recall from \eqref{eq:EM-def} that the quantity $E_{\mathfrak M}(N)$ is
already normalised so that the main term has been removed.
The remaining double sum in \eqref{eq:EM-def} is entirely governed by the  zeros of the Dirichlet $L$-functions involved.

To single out the most dangerous of these zeros, we define, for large
parameters $H$ and $T$ to be chosen later as sufficiently large constants
depending on $\varepsilon$,
\begin{align}\label{eq:EHT-def}
    \mathcal{E} = \mathcal{E}(H,T,R,X)
    :=
    \bigl\{
        (\varrho,\chi) :
        \chi \text{ primitive}, \ \mathrm{cond}(\chi) \le R,\
        L(\varrho,\chi)=0,\ \\ 
        \Re\varrho \ge 1 - \tfrac{H}{\log X},\
        |\Im\varrho| \le T
    \bigr\}.
\end{align}
By a log-free zero-density estimate \eqref{eq:lfzde}, the cardinality
satisfies $|\mathcal{E}| \le Ce^{2H}$.
This is the crucial feature: for fixed $H$ the set $\mathcal{E}$ is
\emph{finite}, so the explicit formula below is a finite sum over bad zeros .

Before stating the theorem, we introduce the generalised singular series
associated to a pair of primitive characters $\chi_1 \bmod{r_1}$ and
$\chi_2\bmod{r_2}$.
Recall that in \eqref{eq:EM-def}, the $q$-sum was truncated at $q \le R$.
Completing this sum to infinity defines
\begin{align}\label{eq:genSS-def}
    \mathfrak{S}(\chi_1,\chi_2,N)
    :=
    \sum_{\substack{q=1\\ [r_1,r_2]\mid q}}^{\infty}
    \frac{\tau(\overline{\chi_1\chi_0^{(q)}})\,\tau(\overline{\chi_2\chi_0^{(q)}})}
         {\varphi(q)^2}\,
    c_{\chi_1\chi_2\chi_0^{(q)}}(N).
\end{align}
When $\chi_1 = \chi_2 = \chi_0^{(1)}$ this reduces to $\mathfrak{S}(N)$,
the classical Hardy--Littlewood singular series.
For a general pair it encodes the same local congruence information as
$\mathfrak{S}(N)$, but weighted by the arithmetic of $\chi_1$ and $\chi_2$.
A key bound, proved in the Main Lemma of \cite{P2018}, is
\begin{align}\label{eq:genSS-bound}
    |\mathfrak{S}(\chi_1,\chi_2,N)|\le \mathfrak{S}(N),
\end{align}
so the generalised singular series is never larger in absolute value than
the classical one. Note that in particular, there is no loss of a constant, as there had been in \eqref{eq:MV-L55}.

More importantly, for any $\eta>0$ small enough, there exists an integer $C(\eta)$ such that
\begin{align}\label{eq:genSS-small}
    |\mathfrak{S}(\chi_1,\chi_2,N)|\le \eta
\end{align}
unless all three of the following divisibility conditions hold:
\begin{align}\label{eq:genSS-conditions}
    r_1 \mid C(\eta)\,N,\qquad
    r_2 \mid C(\eta)\,N,\qquad
    \mathrm{cond}(\chi_1\overline{\chi_2}) < \eta^{-3}.
\end{align}
In other words, the generalised singular series is small unless the conductors
$r_1, r_2$ both divide a bounded multiple of $N$ and the product character
$\chi_1\overline{\chi_2}$ has small conductor.
Most character pairs therefore contribute negligibly, and the effective sum in the explicit formula is sparser than it appears.

\begin{theorem}[Pintz, explicit formula for the major arcs
{\cite[Thm.~1]{P2018}, \cite[Thm.~A]{P2023}}]\label{thm:pintz-explicit}
Let $0<\varepsilon<\vartheta < 4/9-\varepsilon$ and $R = X^\vartheta$.
Then for every even $N\in[X/2,X]$,
\begin{align}\label{eq:pintz-EF}
    E_{\mathfrak M}(N)
    =
    \sum_{\substack{(\varrho_1,\chi_1)\in\mathcal{E}\\(\varrho_2,\chi_2)\in\mathcal{E}}}
    \mathfrak{S}(\chi_1,\chi_2,N)\,
    \frac{\Gamma(\varrho_1)\Gamma(\varrho_2)}{\Gamma(\varrho_1+\varrho_2)}\,
    N^{\varrho_1+\varrho_2-1}
    +
    O\!\left(Xe^{-cH}+\frac{X}{\sqrt{T}}+X^{1-\varepsilon}\right).
\end{align}
The generalised singular series $\mathfrak{S}(\chi_1,\chi_2,N)$ satisfies
\eqref{eq:genSS-bound}--\eqref{eq:genSS-conditions}.
\end{theorem}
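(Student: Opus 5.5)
We start from the definition \eqref{eq:EM-def} of $E_{\mathfrak M}(N)$ and split, for each primitive $\chi$ of conductor at most $R$, the twisted sum as
\begin{align*}
R_\chi(\beta) = -\sum_{\substack{\varrho:\ (\varrho,\chi)\in\mathcal E}}\ \sum_{n\le N} n^{\varrho-1}e(\beta n) + R^\flat_\chi(\beta).
\end{align*}
Here the first piece is the contribution of the generalised exceptional zeros, obtained via a truncated explicit formula. The remainder $R^\flat_\chi$ then only involves zeros with $\Re\varrho<1-H/\log X$, or with $|\Im\varrho|>T$, together with lower-order terms. Inserting this splitting into the product $R_{\chi_1}R_{\chi_2}$ produces three kinds of terms: (i) the product of the two explicit zero pieces; (ii) cross terms with one explicit piece and one $R^\flat$; and (iii) the term $R^\flat_{\chi_1}R^\flat_{\chi_2}$.

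\textbf{Step 1: the main contribution (i).} For a fixed pair of zeros we have to evaluate
\begin{align*}
\int_{|\beta|<R/(qN)}\Bigl(\sum_{n_1}n_1^{\varrho_1-1}e(\beta n_1)\Bigr)\Bigl(\sum_{n_2}n_2^{\varrho_2-1}e(\beta n_2)\Bigr)e(-\beta N)\,d\beta .
\end{align*}
We complete the $\beta$-range to $[0,1)$. The tail is controlled as in the Vinogradov major arc treatment: by partial summation each sum is $\ll \min(N, 1/|\beta|)$, so the tail is $O(qN/R)$, and after summing over $q$ with the weights this gives $O(X^{1-\varepsilon})$ once $R=X^\vartheta$. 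The completed integral equals $\sum_{n_1+n_2=N}n_1^{\varrho_1-1}n_2^{\varrho_2-1}$. A Riemann sum comparison with the beta integral gives
\begin{align*}
\frac{\Gamma(\varrho_1)\Gamma(\varrho_2)}{\Gamma(\varrho_1+\varrho_2)}N^{\varrho_1+\varrho_2-1}+O(T N^{\Re(\varrho_1+\varrho_2)-1}N^{-1}\cdot N).
\end{align*}
The error is acceptable since $|\mathcal E|\le Ce^{2H}$ is bounded. Next we complete the $q$-sum from $q\le R$ to all $q$, which produces $\mathfrak S(\chi_1,\chi_2,N)$ from \eqref{eq:genSS-def}. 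The tail $q>R$ is bounded by Gauss-sum and Ramanujan-sum estimates as $\ll N R^{-1/2+\varepsilon}$ times $|\mathcal E|^2$. This is also $O(X^{1-\varepsilon})$.

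\textbf{Step 2: the remainder terms (ii) and (iii).} We proceed exactly as in the Montgomery--Vaughan argument. We take absolute values and use \eqref{eq:MV-L55} (or \eqref{eq:genSS-bound}) for the $q$-sum, then apply Cauchy--Schwarz and Lemma~\ref{lem:Gall} with $\eta=R/N$. This reduces everything to short-interval sums $\sum_{x-N/R<n\le x}$ of the remainder. For these we need a variant of Proposition~\ref{prop:gallagher} from which the zeros in $\mathcal E$ have been removed. Running the zero-density argument (as in the proofs of Theorems~\ref{thm:linnik} and \ref{thm:primealmostall}), we weight each remaining zero by $X^{\beta-1}\le e^{-\lambda}$ with $\lambda\ge H$ and use the log-free density estimate \eqref{eq:lfzde}. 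This gives a saving of $e^{-cH}$, provided the density exponent permits intervals of length $N/R=X^{1-\vartheta}$. Zeros with $|\gamma|>T$ are handled by the decay $1/|\rho|$ in the short-interval explicit formula, and give the $X/\sqrt T$ term. The cross terms (ii) are handled with the same Cauchy--Schwarz, using the trivial $L^2$ bound $N^{1/2}$ for the explicit piece.

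\textbf{Main obstacle.} The critical step is Step 2 in the full range $\vartheta<4/9$. It requires a short-interval, log-free zero-density input summed over all primitive characters of conductor up to $R$, with intervals of length $X^{1-\vartheta}$. The plan is to use a density estimate of the form $\sum_{r\le R}\sum^*_\chi N(\alpha,T,\chi)\ll (R^2T)^{A(1-\alpha)}$ with an exponent $A$ that is admissible up to $\vartheta=4/9$. This would be a Huxley-type exponent $12/5$, combined with a log-free estimate near $\alpha=1$. I expect that matching these two ranges of $\alpha$, so that the geometric decay in $\lambda$ survives for all $\lambda\ge H$, is where the constraint $4/9$ arises and where most of the work lies.
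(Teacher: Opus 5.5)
Your architecture matches what the paper attributes to Pintz: the finitely many zeros in $\mathcal E$ are run through the explicit formula on the major arcs, and Gallagher's lemma is used only for the remainder. (The paper gives no proof beyond citing Pintz.) However, two steps fail as written.

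\textbf{Step 1: the completions are not uniform in the conductors.} Nothing unconditional prevents $\mathcal E$ from containing a non-real zero $\varrho$ with $|\Im\varrho|\le 1/10$ and $(1-\Re\varrho)\log X=O_\vartheta(1)$ of a real primitive character $\widetilde\chi$ of odd squarefree conductor $r\in(R/2,R]$. The classical zero-free region only gives $1-\Re\varrho\gg 1/(\vartheta\log X)$. Then $(\bar\varrho,\widetilde\chi)\in\mathcal E$ as well. Take an even $N$ with $r\mid N$.
\begin{itemize}
\item In \eqref{eq:EM-def} the only modulus available to this pair is $q=r$, with $A_r(N;\widetilde\chi,\widetilde\chi)=\widetilde\chi(-1)\,r/\varphi(r)$.
\item Its arc $|\beta|<R/(rN)<2/N$ has width $\asymp 1/N$. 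So the truncated $\beta$-integral misses a fixed proportion of $\sum_{n_1+n_2=N}n_1^{\varrho-1}n_2^{\bar\varrho-1}$: as $\varrho\to1$ it tends to $N\int_{|b|<R/r}(\sin\pi b/\pi b)^2\,db\le 0.95N$.
\item The completed $q$-sum is $\mathfrak S(\widetilde\chi,\widetilde\chi,N)=\widetilde\chi(-1)\mathfrak S(N)$ by \eqref{eq:ss-bounds}. This exceeds the single truncated term by the factor $\prod_{p\nmid r}(1+c_p(N)/(p-1)^2)\ge 2\prod_{p\ge 3}(1-(p-1)^{-2})>1.3$.
\end{itemize}
Both completion errors are therefore $\gg_\vartheta N\mathfrak S(N)$ for this pair, independently of $H$ and $T$, whereas you claim $O(X^{1-\varepsilon})$. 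Your bounds $\sum_q|A_q|\,qN/R$ for the $\beta$-tail and $NR^{-1/2+\varepsilon}$ for the $q$-tail hold only when $[r_1,r_2]$ is a power of $X$ smaller than $R$. Pairs of large conductor need a separate device, and your proposal has none. One option is to discard the few $N$ that share a large factor with the conductor of some character in $\mathcal E$, in the spirit of the classes $\mathcal M(R')$ in Section~6; note this already weakens the claim from every even $N$. Separately, your Riemann-sum error $O(TN^{\Re(\varrho_1+\varrho_2)-1})$ is as large as the main term; the correct bound is $O(T\log N)$.

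\textbf{Step 2: the remainder bound cannot reach the stated range.} As designed, it cannot deliver $e^{-cH}$ for $\vartheta$ anywhere near $4/9$ with any available density estimate.
\begin{itemize}
\item After taking absolute values in $q$ via \eqref{eq:MV-L55} and applying Cauchy--Schwarz, you must bound $\sum_{r\le R}\sideset{}{^*}\sum_{\chi}\|R^\flat_\chi\|_{L^2}$. This is an $L^1$-sum over characters, in which a discarded zero enters only with weight $X^{\Re\varrho-1}=e^{-\lambda}$, where $\lambda=(1-\Re\varrho)\log X\ge H$.
\item With intervals of length $N/R$, the short-interval explicit formula gives no decay for zeros with $|\Im\varrho|\lesssim R$; the factor $1/|\varrho|$ only helps beyond height $\asymp R$. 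So you must count zeros up to height $\asymp R$.
\item The log-free bound $(R^2\cdot R)^{A(1-\sigma)}=e^{3A\vartheta\lambda}$ then makes $\int_H^\infty e^{-\lambda}\,d(e^{3A\vartheta\lambda})$ converge only if $3A\vartheta<1$. If you use that a character of conductor $r$ only meets arcs of width $\le R/(rN)$, the condition improves to $2A\vartheta<1$.
\end{itemize}
For $A=12/5$ this gives $\vartheta<5/36$ (resp.\ $5/24$). Reaching $\vartheta$ near $4/9$ would need $A<9/8$, far stronger than even the density hypothesis $A=2$. So your \emph{main obstacle} is not a matter of choosing the exponent: the Montgomery--Vaughan absolute-value scheme is too lossy for this range, and a different treatment of the remainder is needed.

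The same observation shows your source for the $X/\sqrt T$ term is wrong: zeros with $T<|\Im\varrho|\lesssim R$ get no $1/|\varrho|$ decay. For zeros near $1$, a natural source of decay in the heights is the Beta factor, since Stirling gives $|\Gamma(\varrho_1)\Gamma(\varrho_2)/\Gamma(\varrho_1+\varrho_2)|\ll(1+|\Im\varrho_1|+|\Im\varrho_2|)^{-1/2}$.

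Finally, in the cross terms (ii) the trivial bound $N^{1/2}$ for the explicit pieces costs a factor $|\mathcal E|$. This may be as large as $Ce^{2H}$, which swamps any saving $e^{-cH}$ with $c<2$. You need the true size $\approx N^{1/2}X^{\Re\varrho-1}$ of these pieces.
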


Note that
the beta-type factor satisfies
\begin{align}\label{eq:beta-bound}
    \left|\frac{\Gamma(\varrho_1)\Gamma(\varrho_2)}{\Gamma(\varrho_1+\varrho_2)}\right|
    =
    |B(\varrho_1,\varrho_2)|
    \le
    B(\Re\varrho_1,\Re\varrho_2)
    \le 1 + O\!\left(\tfrac{1}{\log X}\right)
\end{align}
for zeros near $1$, by the integral representation of the beta function.
Together with \eqref{eq:genSS-bound}, this means each term is bounded in
absolute value by $(1+o(1))\mathfrak{S}(N)N^{\Re\varrho_1+\Re\varrho_2-1}$,
and whether the sum is small depends entirely on how many  zeros can be close
to $1$ simultaneously.

\medskip

The second step, carried out in \cite{P2023}, is to exploit the sparsity
coming from \eqref{eq:genSS-conditions} to prove $r_{\mathfrak M}(N)>0$ for
all but $X^{1-\vartheta}$ even integers $N$.
Choosing $\eta$ small enough relative to $\varepsilon$, one sees that the
contribution to \eqref{eq:pintz-EF} from pairs not satisfying
\eqref{eq:genSS-conditions} is at most $\varepsilon N\mathfrak{S}(N)$.
The question therefore reduces to showing
\begin{align}\label{eq:pintz-positivity}
    \sideset{}{'}\sum_{(\varrho_1,\chi_1),(\varrho_2,\chi_2)\in\mathcal{E}}
    N^{\Re\varrho_1 + \Re\varrho_2 - 2}
    < 1 - 2\varepsilon,
\end{align}
where $\sum'$ denotes the restriction to pairs satisfying
\eqref{eq:genSS-conditions}.

The condition \eqref{eq:genSS-conditions} is the key structural feature.
Since the conductors $r_1$ and $r_2$ both divide $C_1(\varepsilon)N$, for
each fixed $N$ the contributing  zeros belong to $L$-functions whose
conductors divide a common bounded multiple of $N$.
Pintz exploits this by partitioning the even integers $m \in [X/2,X]$ into
at most $2^K$ classes $\mathcal{M}(R')$, where $R' \subseteq \mathcal{E}$ is
the subset of generalised exceptional characters whose conductors divide
$C_1(\varepsilon)N$.
There are at most $K \le Ce^{2H}$ generalised exceptional characters, so the
number of classes is bounded by a constant depending on $\varepsilon$.
Classes for which $q(R') := \mathrm{lcm}(\mathrm{cond}(\chi) : \chi \in R') > X^\vartheta$
contain at most $C_1(\varepsilon)X^{1-\vartheta}$ integers and can be
discarded as part of the exceptional set.

For the remaining classes, fix one with $q := q(R') \le X^\vartheta$.
All contributing zeros now belong to $L$-functions of conductors dividing
$q$, so the sum \eqref{eq:pintz-positivity} becomes
\begin{align}\label{eq:pintz-S0}
    S_0 = \sideset{}{''}\sum_{\varrho_1,\varrho_2}
    q^{-(1/\vartheta)(\delta_1 + \delta_2)} < 1 - 2\varepsilon,
    \qquad \delta_k = 1 - \Re\varrho_k,
\end{align}
where $\sum''$ is now over  zeros with $r_1 \mid q$, $r_2 \mid q$, and
$\mathrm{cond}(\chi_1\overline{\chi_2}) < C_0(\varepsilon)$.
The problem has been reduced from one involving zeros
 of $L$-functions of different moduli to one where all relevant zeros
belong to characters modulo a single $q \le X^\vartheta$, which strongly
resembles the estimation of Linnik's constant.

A further simplification comes from an equivalence relation on the
generalised exceptional characters: $\chi \sim \chi'$ if there is a chain
$\chi = \chi_1, \ldots, \chi_n = \chi'$ with
$\mathrm{cond}(\chi_\nu\overline{\chi_{\nu+1}}) < C_0(\varepsilon)$ for each
consecutive pair.
Equivalent characters satisfy $\mathrm{cond}(\chi\overline{\chi'}) <
C_3(\varepsilon)$, and since $\delta \gg (\sqrt{q}\log^2 q)^{-1}$ by
Siegel's theorem, no generalised exceptional zero is equivalent to the
trivial character, so $S_0$ contains only genuine zero pairs.
Distributing the zeros among their equivalence classes
$\mathcal{H}_\nu$ ($\nu = 1, \ldots, M \le K$), one obtains
\begin{align}\label{eq:Snu-bound}
    S_0 \le S := \sum_{\nu=1}^M S_\nu^2,
    \qquad
    S_\nu := \sum_{\substack{\varrho \in \mathcal{E},\, \chi \in \mathcal{H}_\nu}}
    q^{-(1/\vartheta)\delta},
\end{align}
reducing the problem to $M \le K$ independent one-dimensional Linnik-type
estimates.

These are then handled using the three principles from Heath-Brown's work
\cite{HB1992} on Linnik's constant: the classical zero-free region for
$\prod_{\chi\,(\mathrm{mod}\,q)}L(s,\chi)$, the Deuring--Heilbronn
phenomenon, and log-free zero-density estimates.
The standard density estimates used in earlier works on the exceptional set
are not strong enough here, since they count only the number of
$L$-functions with at least one zero in a given range rather than the total
number of zeros.
Pintz therefore introduces a new log-free density theorem (his Theorem~C)
that bounds the total number of zeros, losing only a slight constant
compared to Heath-Brown's Lemma~11.1.
A further ingredient is a specialised greedy algorithm (his Theorem~K) that
yields sharper numerical bounds for the weighted zero sums than those
obtainable by standard partial summation.
Combining these tools gives the following.

\begin{proposition}[Pintz {\cite[Thm.~1]{P2023}}]\label{prop:pintz-main}
There exists $\varepsilon > 0$ such that for $\vartheta < 0.28$ and all
sufficiently large $X$ (with an ineffective constant),
\begin{align*}
    S_0 < 1 - \varepsilon.
\end{align*}
\end{proposition}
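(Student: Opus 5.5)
Since $S_0\le S=\sum_{\nu}S_\nu^2$ by \eqref{eq:Snu-bound}, the plan is to bound each one-dimensional sum $S_\nu$ in the manner of Linnik's constant, and then control $\sum_\nu S_\nu^2$ through the sizes of the individual $S_\nu$. We write $L:=1/\vartheta>1/0.28$ and $\lambda_\varrho:=\delta\log q$, so that $q^{-L\delta}=e^{-L\lambda_\varrho}$. The target is then
\begin{align*}
\sum_\nu\Bigl(\sum_{\chi\in\mathcal H_\nu}\sum_{\varrho}e^{-L\lambda_\varrho}\Bigr)^2<1-\varepsilon .
\end{align*}
We may order the zeros by $\lambda_1\le\lambda_2\le\dots$, where $\lambda_1$ is the smallest normalised distance to $1$ among all generalised exceptional zeros of characters with conductor dividing $q$. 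The argument then splits according to the size of $\lambda_1$, following the three principles of Heath-Brown \cite{HB1992}.

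First, the classical zero-free region for $\prod_{\chi\bmod q}L(s,\chi)$ (in Heath-Brown's numerical form) gives $\lambda_1\ge c_0$ for an explicit $c_0$. It also forces at most one zero, possibly with its conjugate, to satisfy $\lambda<c_0'$; every other zero has $\lambda\ge c_0'$ with $c_0'>c_0$. Second, the Deuring--Heilbronn phenomenon is used. If $\lambda_1$ is small, then every other zero $\varrho$ of any $L(s,\chi)$ with $\chi\bmod q$ and $|\Im\varrho|\le T$ satisfies $\lambda_\varrho\ge \log(1/\lambda_1)\cdot c_1-c_2$. Hence the remaining terms $e^{-L\lambda}$ become tiny, and the sum is dominated by $e^{-L\lambda_1}$ together with a small error. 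Third, in the intermediate range we apply Pintz's total-count log-free density estimate (Theorem~C). It bounds the number of zeros with $\lambda\le \lambda'$ by $\ll e^{A\lambda'}$ with an explicit $A$, and is applied within a single equivalence class. This is relevant because characters in one class $\mathcal H_\nu$ differ by characters of bounded conductor, so they are essentially twists by a bounded family. Integrating $e^{-L\lambda}$ against this counting function, partial summation gives a convergent geometric series once $L>A$. The resulting tail bound beyond $\lambda_1$ is $\sum_{\lambda>\lambda_1}e^{-L\lambda}\ll e^{-(L-A)\lambda_1}$, with explicit constants.

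To combine these, we substitute the three regimes into $S=\sum_\nu S_\nu^2$. Only one class can contain a very small $\lambda_1$: Deuring--Heilbronn repels the zeros of all other characters modulo $q$, including those in other classes. That class therefore contributes at most $(e^{-Lc_0}+\text{small})^2$, and each remaining class contributes at most $(\text{tail})^2$. The squares help, because a bound $S_\nu<1$ squares to something much smaller. The number of classes is at most $K$, but it does not enter the bound, since the density estimate is applied to the union of the classes and $\sum_\nu S_\nu^2\le(\max_\nu S_\nu)\sum_\nu S_\nu$. It therefore suffices to show that $\sum_\nu S_\nu$ and $\max_\nu S_\nu$ have product below $1-\varepsilon$ for $\vartheta<0.28$. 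The ineffective constant enters through Siegel's theorem, which guarantees that no generalised exceptional zero lies in the class of the trivial character.

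The main obstacle is numerical. Plain partial summation with the density constant $A$ and the zero-free constant $c_0$ will not reach $0.28$. We would therefore replace it with a greedy optimisation in the spirit of Pintz's Theorem~K. The idea is to consider the worst-case configuration of $\lambda$'s consistent with all three constraints simultaneously: zero-free region, Deuring--Heilbronn repulsion as a function of $\lambda_1$, and the cumulative density bound. For that configuration we maximise $\sum e^{-L\lambda}$ by placing zeros as close to $1$ as permitted, one at a time. We then check numerically that the maximum of $(\max_\nu S_\nu)(\sum_\nu S_\nu)$ is below $1$ for $L=1/0.28$. Making this extremal problem tractable, and sharp enough, is where we expect the real difficulty to lie.
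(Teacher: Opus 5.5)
You have the same architecture the survey attributes to Pintz: reduction to $S=\sum_\nu S_\nu^2$, Heath-Brown's three principles, the total-count density estimate (Theorem~C) and a Theorem~K-type greedy optimisation. However, the step that bounds the dominant class is not justified as written. The classical zero-free region for $\prod_{\chi\bmod q}L(s,\chi)$ does not give $\lambda_1\ge c_0$. It gives only your second statement, that at most one zero lies below $c_0'$, and it allows one real zero of a real character with $\lambda_1$ arbitrarily small. Without excluding such a zero, the conclusion fails, and Deuring--Heilbronn cannot rescue it. The diagonal pair $(\widetilde\beta,\widetilde\beta)$ lies in the restricted sum in \eqref{eq:pintz-S0}, because $\mathrm{cond}(\widetilde\chi\overline{\widetilde\chi})=1$. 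On its own it contributes $q^{-2\widetilde\delta/\vartheta}=e^{-2L\lambda_1}>1-\varepsilon$ as soon as $\lambda_1<\varepsilon/(2L)$. Repelling the other zeros does not touch this term. So your Deuring--Heilbronn step (``if $\lambda_1$ is small \dots\ the sum is dominated by $e^{-L\lambda_1}$'') cannot yield $S<1-\varepsilon$. The lower bound on $\lambda_1$ must come from the section's standing assumption that there is no Siegel zero. The Siegel-zero case is treated outside this proposition: the exceptional pair is kept as a secondary main term as in \eqref{eq:MVsiegelsecondary}, and Deuring--Heilbronn is used only to control everything else.

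Once you only know $\lambda_1\ge c_0$ with $c_0$ of size about $0.1$, the qualitative repulsion $\lambda\ge c_1\log(1/\lambda_1)-c_2$ has bounded strength, so the remaining zeros are not ``tiny''. Every regime then rests on explicit constants, from the zero-free region, from Heath-Brown's explicit Deuring--Heilbronn inequalities and from Theorem~C, and on the extremal optimisation, which you only announce. That is the entire content of the proposition. Crude versions of your three steps already give $S<1-\varepsilon$ for all sufficiently small $\vartheta$; what is asserted is the specific threshold $0.28$. In particular, nothing shows that your cruder combination $\sum_\nu S_\nu^2\le(\max_\nu S_\nu)\sum_\nu S_\nu$ still reaches $\vartheta=0.28$. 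What you have is therefore an outline of Pintz's route, the same outline the survey gives while citing \cite{P2023}, rather than a proof of the stated bound.
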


Together with the minor arc bound, which contributes at most $O(X/R)$
exceptional values, this gives $|\mathcal{E}(X)| < X^{0.72}$.

\section{A fully explicit formula}
\label{sec:fef}
 
The two preceding sections treated the major-arc error $E_{\mathfrak M}(N)$
defined in \eqref{eq:EM-def} in two rather different ways.
Montgomery--Vaughan applied Gallagher's lemma after taking absolute values
throughout, losing all structural information about the underlying zeros.
Pintz retained finitely many zeros explicitly as secondary main terms,
estimating the remainder by absolute values.
The aim of this section is to keep \emph{every} zero visible: replacing
the sharp characteristic function of the major arcs by a smooth weight
allows one to apply the explicit formula to each twisted prime sum directly,
yielding a formula in which all zeros enter explicitly.
 
Throughout this section we work with a smoothly truncated exponential sum.
Fix $\phi\in C_c^\infty(1/5,4/5)$, $\phi\geq 0$, and 
normalised so that
\begin{align}\label{eq:phi-norm}
\int_0^1 \phi(t)\phi(1-t)\,dt = 1,
\end{align}
and define
\begin{align}\label{eq:Sphi-def}
S_\phi(\alpha) := \sum_{n} \Lambda(n)\,\phi(n/N)\,e(\alpha n).
\end{align}
We define the weighted convolution sum
\begin{align*}
    r_\phi(N)=\sum_{n_1+n_2=N} \Lambda(n_1)\phi(n_1/N)\Lambda(n_2)\phi(n_2/N).
\end{align*}
Since $\phi$ is non-negative and bounded, if we can show that $ r_\phi(N)$ is large, we can deduce that $N$ is the sum of two primes. 
 
\subsection{A smooth major-arc weight}
 
\begin{definition}\label{def:bR-fef}
Let $G\in C_c^\infty(\R)$ be a fixed real-valued, non-negative function
supported in $[-2,2]$, equal to a positive constant on $[-1,1]$, and normalised by
$\int_{\R}G(t)\,dt=1$. For $T\ge 1$ set $G_T(x):=T^{-1}G(x/T)$.
Fix $R\ge 1$ and scales $T_q\ge 1$ for each $q\le R$. Define
\begin{align}\label{eq:bR-def}
    b_R(n):=\sum_{q\le R}c_q(n)\,G_{T_q}(n).
\end{align}
\end{definition}
 
The discrete Fourier transform $\widehat{b_R}(\alpha):=\sum_{n\in\Z}b_R(n)e(-\alpha n)$
satisfies, 

via $c_q(n)=\sum^*_{a\bmod q}e_q(an)$,
\begin{align}\label{eq:bR-FT}
\widehat{b_R}(\alpha)
=\sum_{q\le R}\ \ \sideset{}{^*}\sum_{a\bmod q}\widehat{G_{T_q}}(\alpha-a/q),
\end{align}
where $\widehat{G_T}(\beta):=\sum_{n\in\Z}G_T(n)e(\beta n)$.
 
\begin{lemma}\label{lem:bR-FT}
Let $T_q=\eta Nq/R$ for some $0<\eta\le 1$, and assume $10R^2\le\eta N$.
Then
\begin{align}\label{eq:bR-FT-values}
\widehat{b_R}(\alpha)
=
\begin{cases}
1+O(\eta^2)+O_A\!\left(R^2(\eta N/R^2)^{-A}\right),& \alpha\in\mathfrak M(R),\\[0.5ex]
O(1)+O_A\!\left(R^2(\eta N/R^2)^{-A}\right),& \alpha\in\mathfrak m(R).
\end{cases}
\end{align}
\end{lemma}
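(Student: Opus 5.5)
The plan is to apply Poisson summation to each $\widehat{G_{T_q}}$, which turns each term of \eqref{eq:bR-FT} into a sharply localised bump of height about $1$ and width about $1/T_q$ centred at $a/q$. For the fixed Schwartz function $G$, with $\widehat G(\xi):=\int_\R G(t)e(\xi t)\,dt$, Poisson summation gives
\begin{align*}
\widehat{G_{T}}(\beta)=\sum_{k\in\Z}\widehat G\bigl(T(\beta+k)\bigr).
\end{align*}
Here $\widehat G(0)=\int G=1$ and $|\widehat G(\xi)|\ll_A (1+|\xi|)^{-A}$. So $\widehat{b_R}(\alpha)=\sum_{q\le R}\sum^*_{a\bmod q}\sum_k \widehat G(T_q(\alpha-a/q+k))$. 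Combining $a\bmod q$ with $k$ amounts to summing over all reduced fractions $a/q\in\R$ with $q\le R$, so
\begin{align*}
\widehat{b_R}(\alpha)=\sum_{q\le R}\ \sum_{\substack{a\in\Z\\(a,q)=1}}\widehat G\bigl(T_q(\alpha-a/q)\bigr).
\end{align*}

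\emph{Step 1: separation of the fractions.} Fix $\alpha$. Distinct fractions $a/q\ne a'/q'$ with $q,q'\le R$ satisfy $|a/q-a'/q'|\ge 1/(qq')\ge 1/R^2$. Let $a_0/q_0$ be a fraction with $q_0\le R$ closest to $\alpha$. Every other fraction $a/q$ then has $|\alpha-a/q|\ge 1/(2R^2)$, so its term is at most
\begin{align*}
\bigl|\widehat G\bigl(T_q(\alpha-a/q)\bigr)\bigr|\ll_A \bigl(T_q/(2R^2)\bigr)^{-A}\le \bigl(\eta N/(2R^2)\bigr)^{-A},
\end{align*}
using $T_q=\eta Nq/R\ge \eta N/R\ge \eta N/R^2$. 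These terms must also be summed over $q$ and $a$. For each $q$, the fractions $a/q$ are $1/q$-spaced, so with $|\alpha-a/q|\ge 1/(2R^2)$ the sum over $a$ of $(1+T_q|\alpha-a/q|)^{-A}$ is dominated by its nearest terms, up to a factor $O(1+R^2/q)$. Taking $A$ larger and using $\eta N/R^2\ge 10$ absorbs this factor and the sum over $q\le R$, giving the total $O_A(R^2(\eta N/R^2)^{-A})$. This bookkeeping (controlling the multiplicity counting against the decay) is where I expect the main care is needed, though it is routine.

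\emph{Step 2: the single remaining term.} By Step 1, $\widehat{b_R}(\alpha)=\widehat G(T_{q_0}\beta)+O_A(R^2(\eta N/R^2)^{-A})$ with $\beta=\alpha-a_0/q_0$. On $\mathfrak m(R)$ this gives $O(1)$ immediately, since $\widehat G$ is bounded. On $\mathfrak M(R)$ we have $|\beta|\le R/(q_0N)$; for $\eta N/R^2\ge 10$ the major arcs are disjoint, so the fraction defining the arc is indeed the nearest one. Then $|T_{q_0}\beta|\le \eta$, and a Taylor expansion of $\widehat G$ at $0$ gives
\begin{align*}
\widehat G(\xi)=1+\widehat G'(0)\xi+O(\xi^2).
\end{align*}
The linear term must vanish to obtain $O(\eta^2)$. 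Since $\widehat G'(0)=2\pi i\int tG(t)\,dt$, this requires the first moment of $G$ to be zero. If $G$ is not assumed even, I would note that the real part is automatically $1+O(\eta^2)$ and either replace $G$ by its even part, which preserves every property in Definition~\ref{def:bR-fef}, or assume symmetry. This is the one genuine point to check. Finally, the condition that $G$ is constant on $[-1,1]$ is not needed here; only the smoothness of $G$ (for the rapid decay of $\widehat G$) and its compact support are used.
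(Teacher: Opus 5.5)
Your overall route matches the paper's proof. You use Poisson summation, isolate the fraction $a_0/q_0$ nearest to $\alpha$, Taylor-expand that term using that $G$ is even, and kill every other term by the decay of $\widehat G$. The paper's proof makes the same evenness assumption tacitly (``assumed to be even''), and your remark that the even part of $G$ keeps all properties of Definition~\ref{def:bR-fef} is a correct way to justify it. Writing the full Poisson sum as a sum over all reduced fractions in $\R$ is a harmless variant of the paper's one-term approximation with $\|\cdot\|$.

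However, Step~1 fails as written. The inequality $(T_q/(2R^2))^{-A}\le(\eta N/(2R^2))^{-A}$ would need $T_q\ge\eta N$, which is false for every $q<R$. Your justification $T_q\ge\eta N/R^2$ only yields $(\eta N/(2R^4))^{-A}$. From the uniform gap $1/(2R^2)$ you can only deduce $T_q|\alpha-a/q|\ge\eta Nq/(2R^3)$. Under the sole hypothesis $10R^2\le\eta N$ this can be as small as $5q/R$, so small $q$ get no decay at all.

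The missing point is to keep the $q$-dependent gap you stated a line earlier: $|\alpha-a/q|\ge\tfrac12|a/q-a_0/q_0|\ge 1/(2qq_0)$. Hence $T_q|\alpha-a/q|\ge\eta N/(2Rq_0)\ge\eta N/(2R^2)=:M$. The factor $q$ in $T_q=\eta Nq/R$ exactly cancels the $q$ in the Farey gap. This is why $T_q$ is chosen proportional to $q$, and it is the key step in the paper's proof.

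The bookkeeping then needs no enlargement of $A$. Enlarging $A$ could not absorb a power of $R$ anyway, since $\eta N/R^2$ is only known to be $\ge 10$. For fixed $q$, the values $T_q|\alpha-a/q|$ with $(q,a)\ne(q_0,a_0)$ lying on one side of $\alpha$ are $\ge M$ and mutually spaced by at least $T_q/q=\eta N/R\ge M$. So
\[
\sum_{a}\bigl(1+T_q|\alpha-a/q|\bigr)^{-A}\le 2\sum_{j\ge 0}\bigl((j+1)M\bigr)^{-A}\ll_A M^{-A}\qquad(A>1),
\]
and summing over $q\le R$ gives $O_A(RM^{-A})$, within the stated error.

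With this repair the rest of your argument goes through: the nearest fraction equals the arc's centre since $N\ge 10R^2$, $|T_{q_0}\beta|\le\eta$ on the major arcs, and the trivial bound $|\widehat G|\le 1$ handles the minor arcs.
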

 
\begin{proof}
By Poisson, $\widehat{G_T}(\beta)=\widehat{G}_\R(T\beta)+O_A(T^{-A})$,
with $\widehat{G}_\R$ the continuous Fourier transform of $G$, assumed to be even.
Repeated integration by parts and Taylor expansion give
\begin{align}\label{eq:GT-decay}
|\widehat{G_T}(\beta)|&\ll_A (1+T\|\beta\|)^{-A}+T^{-A},\\
\label{eq:GT-taylor}
\widehat{G_T}(\beta)&=1+O\bigl((T\|\beta\|)^2\bigr)+O_A(T^{-A}),
\quad \text{ for } T\|\beta\|\le 1,
\end{align}
where $\|\beta\|$ is the distance to the nearest integer.
Suppose $\alpha=a_0/q_0+\beta\in\mathfrak M(R)$ with $|\beta|\le R/(q_0 N)$.
The term $(q,a)=(q_0,a_0)$ satisfies $T_{q_0}|\beta|\le\eta$,
so \eqref{eq:GT-taylor} gives $\widehat{G_{T_{q_0}}}(\beta)=1+O(\eta^2)$.
For every other reduced pair $(q,a)\ne(q_0,a_0)$ with $q\le R$,
the Farey separation $\|\alpha-a/q\|\gg 1/(qq_0)$ and $10R^2\le\eta N$
give $T_q\|\alpha-a/q\|\gg\eta N/R^2$; with $O(R^2)$ such pairs,
\eqref{eq:GT-decay} yields the stated error.
On the minor arcs $T_q\|\alpha-a/q\|>\eta$ for all $a/q$, so the
closest term is $O(1)$ and the rest give the same additive error.
\end{proof}
 
\subsection{Second-moment approximation}
 
\begin{lemma}\label{lem:bR-approx}
Let $R=X^\vartheta$ with $0<\vartheta<1/2$ and $10R^2\le\eta N$.  Then
\begin{align}\label{eq:bR-approx}
\sum_{N\le X}\Bigl|r_\phi(N)-\int_0^1 S_\phi(\alpha)^2\,
\widehat{b_R}(\alpha)\,e(-N\alpha)\,d\alpha\Bigr|^2
\ll
\bigl(R^{-1}+X^{-2/5}+\eta^4\bigr)X^3(\log X)^5.
\end{align}
\end{lemma}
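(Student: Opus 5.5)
Since $r_\phi(N)=\int_0^1 S_\phi(\alpha)^2e(-N\alpha)\,d\alpha$, the quantity inside the absolute value is the $N$-th Fourier coefficient of
\[
F(\alpha):=S_\phi(\alpha)^2\bigl(1-\widehat{b_R}(\alpha)\bigr).
\]
We first apply Parseval (Bessel's inequality over $N\le X$) to bound the left-hand side of \eqref{eq:bR-approx} by
\[
\int_0^1|S_\phi(\alpha)|^4\,|1-\widehat{b_R}(\alpha)|^2\,d\alpha .
\]
Because $\phi$ is supported in $(1/5,4/5)$, $S_\phi$ is a sum over $n\le N\le X$. We then split $[0,1)$ into $\mathfrak M(R)$ and $\mathfrak m(R)$ and estimate $|1-\widehat{b_R}|$ on each piece with Lemma~\ref{lem:bR-FT}. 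The Poisson-type error $R^2(\eta N/R^2)^{-A}$ is a negative power of $X$ once $A$ is large, provided $\eta N/R^2\ge X^{c}$. This needs $N\gg X$, and the main obstacle is the $N$ that are much smaller than $X$; see the last paragraph. Ignoring that error for now, Lemma~\ref{lem:bR-FT} gives $|1-\widehat{b_R}|\ll\eta^2$ on $\mathfrak M(R)$ and $|1-\widehat{b_R}|\ll 1$ on $\mathfrak m(R)$.

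\emph{Major arcs.} We bound
\[
\int_{\mathfrak M}|S_\phi|^4\,|1-\widehat{b_R}|^2\ll \eta^4\int_0^1|S_\phi|^4 .
\]
By Parseval, $\int_0^1|S_\phi|^4=\sum_m\bigl|\sum_{n_1+n_2=m}\Lambda\phi\,\Lambda\phi\bigr|^2$. The trivial bound on the inner sum is $X(\log X)^2$, and there are $O(X)$ values of $m$, so this is $\ll X^3(\log X)^4$. Hence the major arcs contribute $\eta^4X^3(\log X)^5$.

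\emph{Minor arcs.} We repeat the proof of Lemma~\ref{lem:minor-arc-meansq} with $S$ replaced by $S_\phi$. Proposition~\ref{prop:vino} transfers to the smoothed sum by partial summation, since $\phi$ is smooth, bounded and of bounded variation, at the cost of only an $O(1)$ factor. This gives
\[
\sup_{\mathfrak m}|S_\phi|\ll(XR^{-1/2}+X^{4/5})(\log X)^4,
\]
and together with $\int|S_\phi|^2\ll X\log X$ the minor arcs contribute $(R^{-1}+X^{-2/5})X^3(\log X)^9$. The exponent of the logarithm is not tracked carefully here; the paper's stated $(\log X)^5$ presumably comes from a sharper form of Proposition~\ref{prop:vino}, and the argument would accept whatever log power the minor-arc bound supplies. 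The condition $\vartheta<1/2$ ensures $R\le X/R$, so that Dirichlet approximation on $\mathfrak m(R)$ yields $R<q\le X/R$, exactly as in Lemma~\ref{lem:minor-arc-meansq}.

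\emph{Main obstacle: dependence on $N$.} Both $S_\phi$ and $b_R$ depend on $N$, through $\phi(n/N)$ and through $T_q=\eta Nq/R$. The single-Parseval argument above therefore only works with $N$ frozen. We plan to handle this by a dyadic decomposition $N\in(X/2^{j+1},X/2^j]$. For $N\le X^{1-\epsilon}$ we use the trivial bound $|r_\phi(N)|\ll N(\log N)^2$; these $N$ contribute at most $X^{3-3\epsilon}$, which is acceptable for the claimed bound. The hypothesis $10R^2\le\eta N$, where $N$ is the running variable, is what keeps the Poisson error negligible for the remaining $N$. Within each dyadic block we freeze the scale at the block's endpoint $M$, i.e.\ replace $N$ by $M$ in $\phi(n/N)$ and $T_q$. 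We then expand the difference between the true and frozen $N$-dependence as a Mellin/Fourier integral in the scale variable (say $\phi(n/N)=\int\check\phi(s)(n/N)^{-s}\,ds$ with rapidly decaying transform). This makes the $N$-dependence a harmless oscillating factor $N^{s}$ with $s$ on a vertical line. After Cauchy--Schwarz in $s$, the same Parseval argument applies uniformly for each fixed $s$, and this reduction is the step we expect to require the most care.
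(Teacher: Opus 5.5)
Your core argument is exactly the paper's proof: Bessel's inequality over $N$, the split into $\mathfrak M(R)$ and $\mathfrak m(R)$, the bound $|1-\widehat{b_R}|\ll\eta^2$ against a fourth-moment bound on the major arcs, and on the minor arcs the Vinogradov--Vaughan bound (transferred to $S_\phi$ by partial summation) together with $\int_0^1|S_\phi|^2\ll X\log X$. The three issues you flag are real, and the paper does not address them:
\begin{itemize}
\item its Parseval step treats $S_\phi$ and $\widehat{b_R}$ as if they did not depend on $N$;
\item it silently drops the term $O_A(R^2(\eta N/R^2)^{-A})$ from Lemma~\ref{lem:bR-FT}, which is not small if one only assumes $10R^2\le\eta N$;
\item Proposition~\ref{prop:vino} as stated yields $(\log X)^9$, not $(\log X)^5$ (harmless).
\end{itemize}

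Your patch for the $N$-dependence, however, does not work as written.
\begin{itemize}
\item For $N\le X^{1-\epsilon}$ you bound only $r_\phi(N)$. The quantity also contains $\int_0^1 S_\phi(\alpha)^2\widehat{b_R}(\alpha)e(-N\alpha)\,d\alpha$, whose trivial bound is $\|\widehat{b_R}\|_\infty N\log N$. Lemma~\ref{lem:bR-FT} only gives $\|\widehat{b_R}\|_\infty\ll 1+R^2(\eta N/R^2)^{-A}$, which is of size $R^2$ near the threshold $\eta N\approx 10R^2$ and says nothing below it.
\item Even if you upgrade this to $\|\widehat{b_R}\|_\infty\ll1$, the two halves of your split conflict. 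Take $\eta=R^{-1/4}$. The small-$N$ range needs $3\epsilon\ge\min(\vartheta,2/5)$. Killing the Poisson error for $N\ge X^{1-\epsilon}$ needs $\eta X^{1-\epsilon}/R^2\ge X^{c}$, i.e.\ $\epsilon<1-9\vartheta/4$. These are compatible only for $\vartheta<12/31$, short of the range $\vartheta<4/9$ used in Proposition~\ref{prop:fef}.
\item The Mellin expansion of $\phi(n/N)$ needs a fixed cutoff $\psi(n/M)$ inserted first; otherwise $\sum_n\Lambda(n)n^{-it}e(\alpha n)$ diverges.
\item The corresponding expansion of $G_{T_q}$ in its scale is never specified. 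This dilation acts on $m$ of both signs, so it produces $m$-dependent weights whose Fourier transforms would have to be re-analysed as in Lemma~\ref{lem:bR-FT}.
\end{itemize}

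The simplest repair is to freeze both scales at $X$, i.e.\ use $\phi(n/X)$ and $T_q=\eta Xq/R$; the latter is what Proposition~\ref{prop:fef} actually uses. Then $S_\phi^2(1-\widehat{b_R})$ is a single function and Bessel applies to all $N\le X$ at once. For $\eta=R^{-1/4}$ and $\vartheta<4/9$ one has $\eta X/R^2=X^{1-9\vartheta/4}$, so the Poisson term is $O(X^{-B})$ for any fixed $B$. The paper's two-line argument then becomes literally correct, with no dyadic or small-$N$ analysis.

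If you prefer to keep $\phi(n/N)$, restrict to $N\in[X/2,X]$, which is all the exceptional-set application needs, and use your Mellin device for $\phi$ alone:
\begin{itemize}
\item take $\psi$ equal to $1$ on $[1/10,4/5]$;
\item note that the minor-arc bound for $\sum_n\Lambda(n)\psi(n/X)n^{-it}e(\alpha n)$ loses a factor $1+|t|$ by partial summation;
\item this loss is absorbed by the rapid decay of the Mellin transform of $\phi$.
\end{itemize}
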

 
\begin{proof}
 It suffices to bound
$\sum_{N\le X}|\int_0^1 S_\phi (\alpha)^2(1-\widehat{b_R}(\alpha))e(-N\alpha)\,d\alpha|^2$.
Parseval gives
\[
\sum_{N\le X}\Bigl|\int_0^1
S_\phi(\alpha)^2(1-\widehat{b_R}(\alpha))e(-N\alpha)\,d\alpha\Bigr|^2
\le\int_0^1|S_\phi(\alpha)|^4\,|1-\widehat{b_R}(\alpha)|^2\,d\alpha.
\]
On $\mathfrak M(R)$, $|1-\widehat{b_R}(\alpha)|=O(\eta^2)$ by Lemma~\ref{lem:bR-FT},
so using the bounds $|S_\phi(\alpha)|\le X$ and $\int_0^1|S_\phi(\alpha)|^2\ll X\log X$,
\[
\int_{\mathfrak M(R)}|S_\phi(\alpha)|^4|1-\widehat{b_R}(\alpha)|^2\,d\alpha\ll\eta^4 X^3\log X.
\]
On $\mathfrak m(R)$, $|1-\widehat{b_R}(\alpha)|=O(1)$ and $S_\phi$ satisfies the
same Vinogradov--Vaughan minor-arc estimate as $S$, giving
$\int_{\mathfrak m(R)}|S_\phi(\alpha)|^4\,d\alpha\ll(X^3/R+X^{13/5})(\log X)^5$.
Combining proves \eqref{eq:bR-approx}.
\end{proof}
 
Optimising with $\eta=R^{-1/4}$, valid for $R\le(X/10)^{4/9}$, and
fixing $R=X^\vartheta$ with $0<\vartheta<4/9$, \eqref{eq:bR-approx} becomes
\begin{align}\label{eq:bR-approx-clean}
\sum_{N\le X}\Bigl|r_\phi(N)-\int_0^1
S_\phi^2(\alpha)\widehat{b_R}(\alpha)e(-N\alpha)\,d\alpha\Bigr|^2
\ll\bigl(X^{3-\vartheta}+X^{13/5}\bigr)(\log X)^5.
\end{align}
 
\subsection{Character expansion and the explicit formula}
 
Expanding the smoothed integral via the character sum structure of $b_R$
gives, by the same computation as in \eqref{eq:EM-def},
\begin{align}\label{eq:bR-character}
\int_0^1 S_\phi^2(\alpha)\widehat{b_R}(\alpha)e(-N\alpha)\,d\alpha
=\sideset{}{^*}\sum_{\substack{r_1,r_2\le R\\\chi_i\,(\mathrm{mod}\,r_i)}}
\sum_{\substack{q\le R\\{[r_1,r_2]}\mid q}}
A_q(N;\chi_1,\chi_2)\,\Sigma_q^\phi(N;\chi_1,\chi_2),
\end{align}
where
\[
A_q(N;\chi_1,\chi_2)
:=\frac{\tau(\overline{\chi_1\chi_0^{(q)}})\tau(\overline{\chi_2\chi_0^{(q)}})}
{\varphi(q)^2}\,c_{\chi_1\chi_2\chi_0^{(q)}}(N),
\]
\begin{align}\label{eq:Sigma-q}
\Sigma_q^\phi(N;\chi_1,\chi_2)
:=\sum_{n_1,n_2\ge 1}\Lambda(n_1)\Lambda(n_2)\,
\phi(n_1/N)\phi(n_2/N)\,\chi_1(n_1)\chi_2(n_2)\,G_{T_q}(N-n_1-n_2).
\end{align}

For each $n_1$ in the support of $\phi(\cdot/N)\subset(N/5,4N/5)$,
we notice that the function 
\\$t\mapsto\phi(n_1/N)\phi(t/N)G_{T_q}(N-n_1-t)$ is smooth
and supported inside $(N/5,4N/5)$, hence bounded away from $0$.
The following standard result therefore applies with $S=N/5$.
 
\begin{lemma}[Smoothed explicit formula]\label{lem:smooth-EF}
Let $\chi$ be a primitive character and $\Psi\in C_c^\infty(S,\infty)$
for some $S>0$.  For every $A>0$,
\begin{align}\label{eq:smooth-EF}
\sum_{n\ge 1}\Lambda(n)\chi(n)\Psi(n)
=\1_{\chi=\chi_0^{(1)}}\int_0^\infty\Psi(t)\,dt
-\sum_{\rho}\int_0^\infty\Psi(t)\,t^{\rho-1}\,dt
+O_A(S^{-A}),
\end{align}
where the sum runs over \emph{all} zeros of $L(s,\chi)$, trivial and
non-trivial, converges absolutely, and the implied constant depends
only on $A$ and the $C^\infty$ seminorms of $\Psi$.
\end{lemma}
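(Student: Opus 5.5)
The plan is the standard Mellin-transform route. Write $\widetilde\Psi(s):=\int_0^\infty\Psi(t)t^{s-1}\,dt$. Since $\Psi\in C_c^\infty(S,\infty)$, this is entire. Repeated integration by parts gives, for every $k\ge0$,
\begin{align*}
|\widetilde\Psi(s)|\ll_k \frac{\int_0^\infty|\Psi^{(k)}(t)|\,t^{\Re s+k-1}\,dt}{|s(s+1)\cdots(s+k-1)|}.
\end{align*}
I will use this in two ways. For $\Re s$ bounded above it yields rapid decay in $|\Im s|$, with constants depending on seminorms of $\Psi$. For $\Re s=\sigma\to-\infty$ it yields, because $\Psi$ is supported in $t>S$, a factor $S^{\sigma}$ times seminorm factors. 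One should think of the seminorms as attached to a rescaled $\Psi$ (the paper's application has $\Psi$ of scale $N\asymp S$), so I will take the seminorms to be the scale-invariant ones, $\sup|t^j\Psi^{(j)}(t)|$ together with the support length measured in units of $S$. By Mellin inversion, for $c>1$,
\begin{align*}
\sum_n\Lambda(n)\chi(n)\Psi(n)=\frac{1}{2\pi i}\int_{(c)}-\frac{L'}{L}(s,\chi)\,\widetilde\Psi(s)\,ds.
\end{align*}
This holds because $\Psi(n)=\frac{1}{2\pi i}\int_{(c)}\widetilde\Psi(s)n^{-s}\,ds$ and the Dirichlet series converges absolutely on $\Re s=c$.

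Next I shift the contour to $\Re s=-U$, where $U$ is a large non-integer (or avoids the trivial zeros by a fixed distance). The integrand $-\frac{L'}{L}(s,\chi)\widetilde\Psi(s)$ is meromorphic. Its poles are: a pole at $s=1$ with residue $\widetilde\Psi(1)=\int\Psi$, present only when $\chi$ is trivial; poles at every zero $\rho$, trivial or nontrivial, with residue $-m_\rho\widetilde\Psi(\rho)$ (counting multiplicity); and, for $\chi$ trivial, possibly a pole of $\zeta'/\zeta$ at $s=0$. That last point needs a remark: with the convention that $\rho$ runs over all zeros of the completed-sense $L(s,\chi)$, the point $s=0$ is either a trivial zero (for $\chi$ even nonprincipal) or, for $\zeta$, a regular point with $\zeta(0)\ne0$. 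So every pole is accounted for by the two types in \eqref{eq:smooth-EF}. The residues give exactly
\begin{align*}
\1_{\chi=\chi_0^{(1)}}\int\Psi-\sum_\rho\int\Psi(t)t^{\rho-1}\,dt.
\end{align*}

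To justify the contour shift, I take horizontal segments at heights $\pm T_j$ with $T_j\to\infty$ chosen so that $|L'/L(\sigma+iT_j,\chi)|\ll\log^2(qT_j)$ uniformly in $-U\le\sigma\le c$. Such heights exist by the standard zero-gap argument, and for $\sigma\le-1$ one uses the functional equation. The horizontal contributions then vanish by the rapid decay of $\widetilde\Psi$. Absolute convergence of $\sum_\rho\widetilde\Psi(\rho)$ follows in the same way: there are $O(\log(q(|t|+2)))$ nontrivial zeros per unit height, and $\widetilde\Psi(\rho)\ll_k|\rho|^{-k}$. Trivial zeros lie at $-2m$ or $-2m-1$, and at those points the bound gives $|\widetilde\Psi(-m)|\ll S^{-m}\cdot C^m/m!$, which is summable.

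The main obstacle is the remaining integral on $\Re s=-U$, which has to be $O_A(S^{-A})$. By the functional equation, $\frac{L'}{L}(-U+it,\chi)\ll\log(q(U+|t|))$. Combined with the bound $|\widetilde\Psi(-U+it)|\ll_k S^{-U}(1+|t|)^{-2}$ from the integration-by-parts estimate, this gives $O(S^{-U}\log q)$, so taking $U=A$ finishes the argument. The dependence on $q$ appears only logarithmically, and it can be absorbed only if $S$ dominates $\log q$. In the application the conductor satisfies $q\le R<S$, so this is harmless; I would state that the implied constant may also carry a factor $\log(q+2)$, or equivalently absorb it by taking $U=A+1$. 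The care needed is in making the dependence on $\Psi$ come out through scale-invariant seminorms, so that the result applies uniformly to the family $t\mapsto\phi(n_1/N)\phi(t/N)G_{T_q}(N-n_1-t)$. I will check that $T_q\ge\eta N/R$ is large enough that derivatives of $G_{T_q}$ do not spoil these seminorms beyond polynomial factors in $R$, which are then absorbed by increasing $U$.
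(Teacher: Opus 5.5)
The paper gives no proof of this lemma (it is quoted as standard). Your Mellin inversion with a contour shift is the standard argument, and your pole bookkeeping is right, including the point $s=0$ for $\zeta$ and for even $\chi$.

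The real defect is where you stop the shift. Moving the line only to $\Re s=-A$ captures only the zeros with $\Re\rho>-A$, not the sum over all zeros that you then write down. It also leaves an error of size $S^{-A}\log q$, whereas the lemma asks for a constant depending only on $A$ and the seminorms of $\Psi$. Carrying an extra $\log(q+2)$, or invoking $q\le R<S$, proves a weaker statement than the one given, even though it would suffice for the application.

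The fix is to let $U\to\infty$ through half-odd integers, which keeps the line away from the trivial zeros. There the functional equation gives $\frac{L'}{L}(-U+it,\chi)\ll\log(q(U+|t|+2))$ uniformly in $U$. Two integrations by parts give $|\widetilde\Psi(-U+it)|\le S^{1-U}\|\Psi''\|_{L^1}/((U-1)^2+t^2)$. So the integral over $\Re s=-U$ is $\ll S^{1-U}\|\Psi''\|_{L^1}U^{-1}\log(q(U+2))$, which tends to $0$ whenever $S>1$. The formula is therefore exact, with no error term at all. This removes the $q$-dependence and makes your discussion of scale-invariant seminorms for the family $\Psi_{n_1}$ unnecessary.

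Two smaller points. First, your bound $|\widetilde\Psi(-m)|\ll S^{-m}C^m/m!$ is false in general. If $\Psi\ge0$ is not identically zero and is supported in $[S,S']$, then $\widetilde\Psi(-m)\ge (S')^{-m-1}\int\Psi$, which decays only geometrically. The bound is also unnecessary, since $|\widetilde\Psi(-m)|\le S^{-m}\int|\Psi(t)|t^{-1}\,dt$ already gives absolute convergence over the trivial zeros.

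Second, all of this needs $S>1$, which you use tacitly. If $S<1$ and $\Psi\ge0$ is supported in $(S,1)$, then $\widetilde\Psi(-m)\ge\int\Psi$ for every $m$, so the trivial-zero series diverges and the lemma as literally stated fails. This is harmless in the paper, where $S=N/5$.
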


\subsection{The fully explicit formula}
 
We apply Lemma~\ref{lem:smooth-EF} twice to each term of
\eqref{eq:bR-character}.
For fixed $q$, $\chi_1$, $\chi_2$, set
$\Psi_{n_1}(t):=\phi(n_1/N)\phi(t/N)G_{T_q}(N-n_1-t)
\in C_c^\infty(N/5,\infty)$
for each $n_1$ in the support of $\phi(\cdot/N)$.
Applying Lemma~\ref{lem:smooth-EF} to the inner sum over $n_2$ gives
\begin{align}\label{eq:EF-inner}
\sum_{n_2\ge 1}\Lambda(n_2)\chi_2(n_2)\phi(n_2/N)G_{T_q}(N-n_1-n_2)
&=\1_{\chi_2=\chi_0^{(1)}}\int_0^\infty\Psi_{n_1}(t)\,dt \notag\\
&\quad-\sum_{\rho_2}\int_0^\infty\Psi_{n_1}(t)\,t^{\rho_2-1}\,dt
+O_A(N^{-A}),
\end{align}
where the zero sum runs over all zeros of $L(s,\chi_2)$.
Substituting into \eqref{eq:Sigma-q} and applying
Lemma~\ref{lem:smooth-EF} to the outer sum over $n_1$,
with smooth weight
$u\mapsto\phi(u/N)\int_0^\infty\phi(t/N)G_{T_q}(N-u-t)\,t^{\rho_2-1}\,dt
\in C_c^\infty(N/5,\infty)$, gives
\begin{align}\label{eq:Sigma-expanded}
\Sigma_q^\phi(N;\chi_1,\chi_2)
&=\1_{\chi_1=\chi_0^{(1)}}\1_{\chi_2=\chi_0^{(1)}}
   I_q^\phi(N;1,1) \tag*{(pole--pole)} \notag\\
&\quad-\1_{\chi_2=\chi_0^{(1)}}\sum_{\rho_1}I_q^\phi(N;\rho_1,1)
   -\1_{\chi_1=\chi_0^{(1)}}\sum_{\rho_2}I_q^\phi(N;1,\rho_2)
   \tag*{(pole--zero)} \notag\\
&\quad+\sum_{\rho_1,\rho_2}I_q^\phi(N;\rho_1,\rho_2)
   +O_A(N^{-A}), \tag*{(zero--zero)}
\end{align}
where all zero sums run over all zeros (trivial and non-trivial)
of the respective $L$-functions, and the archimedean factor is
\begin{align}\label{eq:Iq-def}
I_q^\phi(N;\rho_1,\rho_2)
:=\iint_{u_1,u_2>0}\phi(u_1/N)\phi(u_2/N)\,
u_1^{\rho_1-1}u_2^{\rho_2-1}\,G_{T_q}(N-u_1-u_2)\,du_1\,du_2.
\end{align}
 
Let $\mathcal Z_R$ denote the set of pairs $(\rho,\chi)$ with $\chi$
primitive of conductor $r\le R$ and $\rho$ any zero of $L(s,\chi)$.
Define
\begin{align}
\label{eq:Mfef}
\mathcal M(N;R)
&:=-\sum_{(\rho,\chi)\in\mathcal Z_R}
\sum_{\substack{q\le R\\\mathrm{cond}(\chi)\mid q}}
A_q(N;\chi_0^{(1)},\chi)\,I_q^\phi(N;1,\rho) \notag\\
&\phantom{:=}
-\sum_{(\rho,\chi)\in\mathcal Z_R}
\sum_{\substack{q\le R\\\mathrm{cond}(\chi)\mid q}}
A_q(N;\chi,\chi_0^{(1)})\,I_q^\phi(N;\rho,1),\\[1ex]
\label{eq:Zfef}
\mathcal Z(N;R)
&:=\sum_{\substack{(\rho_1,\chi_1),(\rho_2,\chi_2)\in\mathcal Z_R}}
\sum_{\substack{q\le R\\{[\mathrm{cond}(\chi_1),\mathrm{cond}(\chi_2)]}\mid q}}
A_q(N;\chi_1,\chi_2)\,I_q^\phi(N;\rho_1,\rho_2).
\end{align}

In conclusion we obtain the following proposition.
\begin{proposition}\label{prop:fef}
Let $R=X^\vartheta$ with $0<\vartheta<4/9$, $\eta=R^{-1/4}$, and
$T_q=\eta Xq/R$.  Then
\begin{align}\label{eq:fef-meansq}
\sum_{N\le X}
\bigl|r_\phi(N)-N\mathfrak S(N)-\mathcal M(N;R)-\mathcal Z(N;R)\bigr|^2
\ll\bigl(X^{3-\vartheta}+X^{13/5}\bigr)(\log X)^5.
\end{align}
\end{proposition}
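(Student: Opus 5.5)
The plan is to combine the second-moment approximation \eqref{eq:bR-approx-clean} with an exact identity for the smoothed major-arc integral. The parameters $\eta=R^{-1/4}$ and $R=X^\vartheta$ with $\vartheta<4/9$ satisfy $10R^2\le\eta N$ for large $X$ whenever $N\gg X$. Under these choices, Lemma~\ref{lem:bR-approx} gives the claimed mean-square bound for $r_\phi(N)-\int_0^1 S_\phi^2\widehat{b_R}e(-N\alpha)\,d\alpha$. So it suffices to prove, for each individual $N\le X$, the pointwise identity
\[
\int_0^1 S_\phi^2(\alpha)\widehat{b_R}(\alpha)e(-N\alpha)\,d\alpha = N\mathfrak S(N)+\mathcal M(N;R)+\mathcal Z(N;R)+\mathcal{R}(N),
\]
where the remainder satisfies $\sum_{N\le X}|\mathcal{R}(N)|^2\ll X^{3-\vartheta}$. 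The proposition then follows from $|a+b|^2\le 2|a|^2+2|b|^2$. For small $N$ (say $N\le X^{1-\vartheta/2}$) the trivial bounds $|r_\phi(N)|\ll N\log^2N$ and similar bounds for the other terms already suffice. Note also that $T_q$ is defined with $X$ rather than $N$; for $N\asymp X$ this changes nothing in Lemma~\ref{lem:bR-FT}.

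For the identity, I would start from \eqref{eq:bR-character} and substitute \eqref{eq:Sigma-expanded} into each $\Sigma_q^\phi$, which is legitimate by Lemma~\ref{lem:smooth-EF}. The zero--zero terms reassemble exactly into $\mathcal Z(N;R)$ after exchanging the sums over $q$ and over zeros. The pole--zero terms give exactly $\mathcal M(N;R)$. The $O_A(N^{-A})$ errors, summed over $O(R^2)$ characters and $q\le R$, are negligible. The exchange of sums is justified by absolute convergence: each $I_q^\phi$ decays rapidly in $|\Im\rho|$ by integration by parts in $u_i$, because $\phi$ and $G_{T_q}$ are smooth on scales $\ge T_q\gg N^{1/2}$.

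The remaining step, which I expect to be the only genuine work, is the pole--pole term. It equals
\[
\sum_{q\le R}\frac{\mu(q)^2c_q(N)}{\varphi(q)^2}I_q^\phi(N;1,1),
\]
since $\tau(\chi_0^{(q)})=\mu(q)$. I would substitute $u_i=Nt_i$ and integrate out $G_{T_q}$. Because $T_q\le \eta X$ is small relative to $N$ and $\phi$ is smooth, this gives
\[
I_q^\phi(N;1,1)=N\int_0^1\phi(t)\phi(1-t)\,dt+O(T_q^2/N)=N+O(\eta^2N).
\]
Here I use that $G$ is even, so the first-order Taylor term vanishes, together with the normalisation \eqref{eq:phi-norm}. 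Summing over $q$ with $\sum_q \mu^2|c_q(N)|/\varphi(q)^2\ll \mathfrak S(N)\log\log N$ gives an error $O(\eta^2N\log N)$. Completing the $q$-sum to infinity costs $O(N\cdot d(N)/R)$. Both errors are pointwise $\ll X R^{-1/2+o(1)}$. Their squares summed over $N\le X$ give $X^{3}R^{-1+o(1)}$, which is borderline; replacing the $o(1)$ by the actual $\log$ powers yields $X^{3-\vartheta}(\log X)^{O(1)}$, acceptable against the right side up to the log power.

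The main obstacle is keeping these constants honest. The $\eta^2$ Taylor error from the width of $G_{T_q}$ enters squared as $\eta^4X^3=X^{3-\vartheta}$, exactly at the threshold of the claimed bound. This is also why the choice $\eta=R^{-1/4}$ is forced.
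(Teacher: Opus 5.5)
Your proposal follows the paper's argument: \eqref{eq:bR-approx-clean} combined with the exact expansion \eqref{eq:bR-character}--\eqref{eq:Sigma-expanded}, in which the pole--zero and zero--zero terms are $\mathcal M(N;R)$ and $\mathcal Z(N;R)$ by definition. Your evaluation of the pole--pole term, $I_q^\phi(N;1,1)=N+O(T_q^2/N)$ using the evenness of $G$ and \eqref{eq:phi-norm}, followed by completing the singular series, is correct and supplies the step the paper leaves implicit. One correction to your closing remark: if you keep the factor $(q/R)^2$ in $T_q^2/N$, this error lies far below $X^{3-\vartheta}$, so the choice $\eta=R^{-1/4}$ is dictated by the $\eta^4$ term in Lemma~\ref{lem:bR-approx}, not by this step.
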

 
\subsection{The archimedean factor and the connection with Pintz}
 We now analyse the behaviour of $I_q^\phi(N;\rho_1,\rho_2)$, depending on the location of $\rho_1,\rho_2.$
\begin{lemma}\label{lem:Iq-eval}
For $0<\Re\rho_i\le 1$, define the weighted beta integral
\begin{align}\label{eq:Bphi-def}
B_\phi(\rho_1,\rho_2):=\int_0^1\phi(\sigma)\phi(1-\sigma)\,\sigma^{\rho_1-1}(1-\sigma)^{\rho_2-1}\,d\sigma,
\end{align}
which satisfies $B_\phi(1,1)=c_\phi$.
Then for every $A>0$,
\begin{align}\label{eq:Iq-decay}
\bigl|I_q^\phi(N;\rho_1,\rho_2)\bigr|
\ll_A
N^{\Re(\rho_1+\rho_2)-1}
\Bigl(1+\bigl(|\Im\rho_1|+|\Im\rho_2|\bigr)\tfrac{T_q}{N}\Bigr)^{-A},
\end{align}
and in the region $|\Im(\rho_1+\rho_2)|\,T_q\le N$,
\begin{align}\label{eq:Iq-near}
I_q^\phi(N;\rho_1,\rho_2)
=
B_\phi(\rho_1,\rho_2)\,N^{\rho_1+\rho_2-1}
+O\!\left(
N^{\Re(\rho_1+\rho_2)-1}
\bigl(1+|\Im(\rho_1+\rho_2)|\bigr)\frac{T_q}{N}
\right).
\end{align}
\end{lemma}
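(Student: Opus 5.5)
The plan is to reduce the double integral to a one-dimensional convolution by rescaling. I would substitute $u_i=N\sigma_i$ in \eqref{eq:Iq-def}, which gives
\begin{align*}
I_q^\phi(N;\rho_1,\rho_2)=N^{\rho_1+\rho_2}\iint\phi(\sigma_1)\phi(\sigma_2)\sigma_1^{\rho_1-1}\sigma_2^{\rho_2-1}\,G_{T_q}\bigl(N(1-\sigma_1-\sigma_2)\bigr)\,d\sigma_1\,d\sigma_2 .
\end{align*}
Next I change variables to $s=\sigma_1+\sigma_2$ and $\sigma=\sigma_1/s$. Writing $N G_{T_q}(N(1-s))=\kappa^{-1}G((1-s)/\kappa)$ with $\kappa:=T_q/N$, this becomes
\begin{align*}
I_q^\phi=N^{\rho_1+\rho_2-1}\int F(s)\,\kappa^{-1}G\bigl((1-s)/\kappa\bigr)\,ds,
\end{align*}
where
\begin{align*}
F(s):=s^{\rho_1+\rho_2-1}\int_0^1\phi(s\sigma)\phi(s(1-\sigma))\,\sigma^{\rho_1-1}(1-\sigma)^{\rho_2-1}\,d\sigma .
\end{align*}
The support of $\phi$ in $(1/5,4/5)$ keeps $\sigma$ and $1-\sigma$ bounded away from $0$ when $s$ is near $1$, so there are no singularities. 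Also $F(1)=B_\phi(\rho_1,\rho_2)$. Since $\kappa\le\eta R q/R\cdot X/N\ll 1$ is small, the kernel $\kappa^{-1}G((1-s)/\kappa)$ is an approximate identity of mass $1$ supported in $|1-s|\le 2\kappa$.

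For \eqref{eq:Iq-near}, I would Taylor expand $F(s)=F(1)+O(\sup|F'|\,|1-s|)$ on $|1-s|\le2\kappa$. Differentiating $F$ in $s$ hits either $s^{\rho_1+\rho_2-1}$, producing a factor $|\rho_1+\rho_2-1|\ll 1+|\Im(\rho_1+\rho_2)|$, or the smooth factors $\phi$, which cost $O(1)$. This gives the error $N^{\Re(\rho_1+\rho_2)-1}(1+|\Im(\rho_1+\rho_2)|)\kappa$, as claimed. The hypothesis $|\Im(\rho_1+\rho_2)|T_q\le N$ is only what makes this error nontrivial.

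For the decay bound \eqref{eq:Iq-decay}, I would instead go back to the original double integral and integrate by parts. If $|\Im\rho_1|\ge|\Im\rho_2|$, I integrate by parts in $u_1$ using $u_1^{\rho_1-1}=\frac{d}{du_1}\bigl(u_1^{\rho_1}/\rho_1\bigr)$, repeatedly, moving derivatives onto $\phi(u_1/N)u_1^{\dots}G_{T_q}(N-u_1-u_2)$. Each step gains $|\rho_1|^{-1}$ and costs at most $\max(N^{-1},T_q^{-1})\cdot u_1$, which is $\ll N/T_q$ because $T_q\le N$. So each step yields a factor $\ll N/(T_q|\Im\rho_1|)$. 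There is one subtlety: derivatives falling on powers $u_1^{-k}$ produce factors like $(\rho_1-1)\cdots$. To avoid this, I would work in the scaled form above and write $F$ as a Mellin-type integral, using the oscillation $s^{i\gamma}$ against the smooth kernel of width $\kappa$.

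The step I expect to be the main obstacle is exactly this point. When $\Im\rho_1\approx-\Im\rho_2$, the sum $\rho_1+\rho_2$ need not oscillate, so the decay has to come from the inner $\sigma$-integral. That integral oscillates like $\sigma^{i\gamma_1}(1-\sigma)^{i\gamma_2}$, whose phase derivative $\gamma_1/\sigma-\gamma_2/(1-\sigma)$ is $\gg|\gamma_1|+|\gamma_2|$ when $\gamma_1$ and $\gamma_2$ have opposite signs. Integrating by parts in $\sigma$ then gives arbitrary decay in $|\gamma_1|+|\gamma_2|$ directly. In the same-sign case, I would use the $s$-oscillation $s^{i(\gamma_1+\gamma_2)}$ against the kernel, which gives decay in $\kappa|\gamma_1+\gamma_2|\asymp\kappa(|\gamma_1|+|\gamma_2|)$. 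Combining the two cases gives \eqref{eq:Iq-decay}, with the trivial bound $\ll N^{\Re(\rho_1+\rho_2)-1}$ covering small imaginary parts.
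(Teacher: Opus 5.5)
Your proposal is correct and is essentially the paper's proof. It uses the same substitution to the total $v=u_1+u_2$ (your $s=v/N$) and the ratio $\sigma$, the same Taylor expansion against the kernel of width $T_q/N$ for \eqref{eq:Iq-near}, and for \eqref{eq:Iq-decay} the same split on the sign of $\Im\rho_1\,\Im\rho_2$: integration by parts in $\sigma$ for opposite signs and in the radial variable for equal signs. Incidentally, the $u_1$-route you set aside already works with no case split: at fixed $u_2$ with $|\Im\rho_1|\ge|\Im\rho_2|$, integrate by parts against the phase $\Im\rho_1\log u_1$ and differentiate only the real amplitude $\phi(u_1/N)\,u_1^{\Re\rho_1-1}G_{T_q}(N-u_1-u_2)$, so each step costs at most a factor $\ll N/(T_q|\Im\rho_1|)$ and the feared factors $(\rho_1-1)\cdots$ never appear.
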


\begin{proof}
Substitute $v=u_1+u_2$, $u_1=v\sigma$ in \eqref{eq:Iq-def}:
\begin{align}\label{eq:Iq-inner}
I_q^\phi(N;\rho_1,\rho_2)
=\int_0^\infty v^{\rho_1+\rho_2-1}G_{T_q}(N-v)\,J_v(\rho_1,\rho_2)\,dv,
\end{align}
where $J_v(\rho_1,\rho_2):=\int_0^1\phi(v\sigma/N)\phi(v(1-\sigma)/N)
\sigma^{\rho_1-1}(1-\sigma)^{\rho_2-1}\,d\sigma$.
On the support of $G_{T_q}(N-v)$ we have $v\asymp N$, and the cutoff
is smooth in $\sigma\in(1/5,4/5)$ with all $\sigma$-derivatives $O(1)$.
Write $t_j=\Im\rho_j$.

For \eqref{eq:Iq-decay}, split on the sign of $t_1 t_2$. If $t_1 t_2<0$,
the phase $t_1\log\sigma+t_2\log(1-\sigma)$ has derivative
$t_1/\sigma-t_2/(1-\sigma)$ of size $\ge |t_1|+|t_2|$ with no
stationary point on $(1/5,4/5)$, so repeated integration by parts in
$\sigma$ gives $|J_v|\ll_A(1+|t_1|+|t_2|)^{-A}$; the outer integral is
trivially $O(1)$, which is stronger than \eqref{eq:Iq-decay} since
$T_q\le N$. If $t_1 t_2\ge 0$, then $|t_1+t_2|=|t_1|+|t_2|$ and we extract
decay from $v$ instead: write $v^{\rho_1+\rho_2-1}=
v^{\Re(\rho_1+\rho_2)-1}e^{i(t_1+t_2)\log v}$. The phase
$(t_1+t_2)\log v$ has derivative $\asymp(|t_1|+|t_2|)/N$ on $v\asymp N$,
and the amplitude
$v^{\Re(\rho_1+\rho_2)-1}G_{T_q}(N-v)J_v$ has $k$-th $v$-derivative
$O(N^{\Re(\rho_1+\rho_2)-1}T_q^{-k})$, the scale being set by $G_{T_q}$.
Integration by parts $A$ times yields \eqref{eq:Iq-decay}.

For \eqref{eq:Iq-near}: substitute $w=(N-v)/T_q$ to write
\[
I_q^\phi
=\int_{-2}^{2}G(w)\,(N-T_qw)^{\rho_1+\rho_2-1}\,
J_{N-T_qw}(\rho_1,\rho_2)\,dw.
\]
Since $|T_qw/N|\le 2T_q/N\ll 1$ on the support of $G$,
Taylor expansion of $\phi$ gives\\
$J_{N-T_qw}(\rho_1,\rho_2)=B_\phi(\rho_1,\rho_2)+O(T_q/N)$.
For the factor $(N-T_qw)^{s-1}$ with $s=\rho_1+\rho_2$,
writing $(N-T_qw)^{s-1}=N^{s-1}(1-T_qw/N)^{s-1}$
and expanding $|e^{(s-1)\log(1-T_qw/N)}-1|\ll|s|T_q/N\ll(1+|\Im s|)T_q/N$
(using $|s-1|\ll 1+|\Im s|$ for $\Re s\in(0,2]$),
then integrating against $G(w)$ and using $\int_{-2}^2 G(w)\,dw=1$
gives \eqref{eq:Iq-near}.
\end{proof}
 
The lemma identifies the connection with Pintz's formula \eqref{eq:pintz-EF}.
For any fixed $R$, the set of zero pairs $(\rho_1,\chi_1),(\rho_2,\chi_2)$
with $|\Im\rho_i|\le T_0$ for a suitable threshold $T_0=T_0(R)$ is finite. The leading term in \eqref{eq:Iq-near} becomes independent of $q$ and we can generate the generalised exceptional series of \eqref{eq:genSS-def}. While the saving is not very strong, because he only considers a finite amount of pairs, it is sufficient. In general, this lemma shows that our intervals of length $T_q$ make zeros with $|\Im\rho_i|$ somewhat larger than $T_q/N$ negligible via the decay in \eqref{eq:Iq-decay}. It should be noted, though, that the inclusion of an $\eta=R^{-1/4}$ term, makes our intervals shorter than usual. It would be interesting to see if other ideas could circumvent this interval shortening, perhaps ones related to Heath-Brown's circle method \cite{hbcircle}.

\section{A sparse Hardy--Littlewood conjecture and Siegel  Zeros}
\label{sec:sparse-HL}

We close with a simple consequence of the exceptional-zero case of Pintz's
work: knowing the expected number of Goldbach representations, even in a
rather sparse form, rules out the existence of exceptional zeros . Results
of this type have attracted considerable recent attention: Fei \cite{F2016},
the first author and Halupczok \cite{BH2021}, Jia \cite{Jia2022} and Goldston--Suriajaya
\cite{GS2021} (see also Friedlander--Goldston--Iwaniec--Suriajaya
\cite{FGIS2022}) showed that if $r_2(N)$ stays close to the Hardy--Littlewood
prediction $\mathfrak{S}(N)N$ of Conjecture~\ref{conj:HL} for essentially all
even $N$, then no exceptional zero can exist. Our observation is that the
machinery behind the power-saving exceptional-set bounds, specifically the
exceptional-zero case of Pintz's explicit formula, upgrades such
statements without much effort: the asymptotic may in addition fail on a
power-sized exceptional set (Theorem~\ref{thm:sparse-HL-no-Siegel}). This
approach does fall short of the more recent result of Matom\"aki--Merikoski
\cite[Cor.~1.2]{MM2023}, who need only a \emph{single} multiple $N$ of the
conductor with roughly the expected number of representations. However, since their
hypothesis lives at the scale $N\ge\widetilde r^{\,10}$, while ours operates
at $N\asymp\widetilde r^{\,A}$ for any fixed $A>5/2$, their statement
does not imply ours.

Throughout this section, $\widetilde\chi$ denotes a primitive real character
of conductor $\widetilde r$ whose $L$-function has a real zero
$\widetilde\beta=1-\widetilde{\delta}$ near $1$. As in \eqref{eq:MVsiegelsecondary} and
\eqref{eq:beta-bound} we use the beta factor
\begin{align*}
    B(\varrho_1,\varrho_2):=\frac{\Gamma(\varrho_1)\Gamma(\varrho_2)}{\Gamma(\varrho_1+\varrho_2)},
\end{align*}
so that $B(\widetilde\beta,\widetilde\beta)\to1$ as $\widetilde\beta\to1$.
Beyond the general bound \eqref{eq:genSS-bound}, we need two facts about the
generalised singular series \eqref{eq:genSS-def} attached to
$\widetilde\chi$, both direct consequences of the closed-form evaluation in
the Main Lemma of \cite{P2023} ((7.4)--(7.7) there): for even $N$,
\begin{equation}\label{eq:ss-bounds}
    \bigl|\mathfrak{S}(\chi_0^{(1)},\widetilde\chi,N)\bigr|
        \ll \mathfrak{S}(N)\,\frac{\widetilde r}{\varphi(\widetilde r)^{2}},
    \qquad
    \mathfrak{S}(\widetilde\chi,\widetilde\chi,N)
        =\widetilde\chi(-1)\,\mathfrak{S}(N)
    \quad\text{if }\widetilde r\mid N.
\end{equation}

Fix $2/5\le\vartheta<4/9$, set $R=X^{\vartheta}$ and write
$\mathcal L:=\log X$. The key structural input is the Deuring--Heilbronn
phenomenon, which we already encountered in Proposition~\ref{prop:gallagher}: a
Siegel zero pushes all other zeros  away from the line $\Re s=1$. In the
quantitative form of \cite[Lem.~4.22]{P2023}, the zero $\widetilde\beta$
forces every other zero $\varrho$ of every $L(s,\chi)$ with
$\mathrm{cond}(\chi)\le R$ and $|\Im\varrho|\le\sqrt X$ to satisfy
\begin{align*}
    1-\Re\varrho\ \ge\ c_0(\vartheta)\,
    \frac{\log\bigl(1/(\widetilde{\delta}\mathcal L)\bigr)}{\mathcal L}.
\end{align*}
Consequently, for every cutoff $H$ with
\begin{equation}\label{eq:H-range}
    0<H\le H_1:=c_0(\vartheta)\log\frac{1}{\widetilde{\delta}\mathcal L},
\end{equation}
the set $\mathcal{E}=\mathcal{E}(H,\sqrt X,R,X)$ of generalised exceptional
zeros  defined in \eqref{eq:EHT-def} contains no zero besides
$(\widetilde\beta,\widetilde\chi)$.

\begin{proposition}[Exceptional-zero Goldbach formula]\label{prop:EZ-Goldbach}
Let $\vartheta$, $R$ be as above and suppose that $\widetilde r\le R$ and
$0<\widetilde{\delta}<h/\mathcal L$, where $h=h(\vartheta)>0$ is sufficiently small
(cf.\ \cite[Thm~2]{P2023}). There are $c'=c'(\vartheta)>0$ and
$\varepsilon_0=\varepsilon_0(\vartheta)>0$ such that for every
$\varepsilon>0$ and all but $O_\varepsilon(X^{3/5+\varepsilon})$ even
$N\in[X/2,X]$ we have
\begin{align}\label{eq:EZ-Goldbach}
    r_2(N)
    =&\mathfrak{S}(N)\,N
    +\mathfrak{S}(\widetilde\chi,\widetilde\chi,N)\,
        B(\widetilde\beta,\widetilde\beta)\,N^{2\widetilde\beta-1}
    \\&+O\!\bigl(\mathfrak{S}(N)\,X\,(\widetilde{\delta}\mathcal L)^{c'}\bigr)
    +O\!\Bigl(\mathfrak{S}(N)\,N\,\frac{\widetilde r}{\varphi(\widetilde r)^{2}}\Bigr)
    +O\!\bigl(X^{1-\varepsilon_0}\bigr), \nonumber
\end{align}
where the implied constants depend only on $\vartheta$.
\end{proposition}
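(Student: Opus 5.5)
The plan is to combine the minor-arc mean square bound with Pintz's explicit formula (Theorem~\ref{thm:pintz-explicit}), using the Deuring--Heilbronn consequence \eqref{eq:H-range} to empty the set $\mathcal E$ of everything but $(\widetilde\beta,\widetilde\chi)$.

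\textbf{Minor arcs.} Take $R=X^\vartheta$ and write $r_2(N)=r_{\mathfrak M}(N)+r_{\mathfrak m}(N)$ as in \eqref{eq:splitmajorminor}. Lemma~\ref{lem:minor-arc-meansq} gives
\[
\sum_{N\le X}|r_{\mathfrak m}(N)|^2\ll (X^{3-\vartheta}+X^{13/5})(\log X)^5\ll X^{13/5}(\log X)^5 ,
\]
where the last step uses $\vartheta\ge 2/5$. By Chebyshev, $|r_{\mathfrak m}(N)|\le X^{1-\varepsilon_0}$ holds outside a set of size $X^{13/5-2+2\varepsilon_0+o(1)}$. Choosing $\varepsilon_0\le\varepsilon/3$ makes this at most $O_\varepsilon(X^{3/5+\varepsilon})$, so $r_{\mathfrak m}$ is absorbed into the last error term.

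\textbf{Major arcs.} Write $r_{\mathfrak M}(N)=N\mathfrak S(N)+E_{\mathfrak M}(N)+(\text{mixed terms})+O(N^{1-\delta})$.

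\emph{Choice of $H$ and $T$.} Apply Theorem~\ref{thm:pintz-explicit} with $T=\sqrt X$. Take $H$ as large as \eqref{eq:H-range} allows, namely $H=H_1=c_0(\vartheta)\log(1/(\widetilde\delta\mathcal L))$; this is positive because $\widetilde\delta\mathcal L<h$ is small. By \eqref{eq:H-range}, $\mathcal E=\{(\widetilde\beta,\widetilde\chi)\}$. Note that $1-\widetilde\beta=\widetilde\delta<h/\mathcal L$, so the Siegel zero itself does lie in $\mathcal E$ provided $H>h$. The diagonal term of \eqref{eq:pintz-EF} is then exactly $\mathfrak S(\widetilde\chi,\widetilde\chi,N)B(\widetilde\beta,\widetilde\beta)N^{2\widetilde\beta-1}$. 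Its error terms are
\[
Xe^{-cH}=X(\widetilde\delta\mathcal L)^{c\,c_0}, \qquad \frac{X}{\sqrt T}+X^{1-\varepsilon}\ll X^{1-\varepsilon_0}.
\]
Setting $c'=c\,c_0$ gives the second line of \eqref{eq:EZ-Goldbach}, except for the factor $\mathfrak S(N)$.

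\emph{Recovering the factor $\mathfrak S(N)$.} The statement asks for $\mathfrak S(N)X(\widetilde\delta\mathcal L)^{c'}$ rather than $X(\widetilde\delta\mathcal L)^{c'}$. Inspecting Pintz's derivation, which applies Gallagher's Proposition~\ref{prop:gallagher} to the non-exceptional zeros, the error is weighted by the absolute generalised singular series and is therefore bounded by $\mathfrak S(N)$ through \eqref{eq:MV-L55}. If this factor is not directly available from the theorem as stated, I would argue instead from $\mathfrak S(N)\gg1$ for even $N$ and absorb the discrepancy into the constant.

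\emph{Mixed terms.} The mixed terms (pole--zero) are where the Siegel zero pairs with the principal character. Running the explicit formula on them as in \eqref{eq:Mfef} gives
\[
-\mathfrak S(\chi_0^{(1)},\widetilde\chi,N)\,B(1,\widetilde\beta)\,N^{\widetilde\beta},
\]
together with the symmetric term. By \eqref{eq:ss-bounds} these are $O(\mathfrak S(N)N\widetilde r/\varphi(\widetilde r)^2)$, since $B(1,\widetilde\beta)\ll1$. The contribution of the other zeros in the mixed terms is handled by Gallagher exactly as in the zero--zero terms, again giving $O(\mathfrak S(N)X(\widetilde\delta\mathcal L)^{c'})$.

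\textbf{Main obstacle.} I expect the hard step to be checking that the mixed and zero--zero remainders really carry the saving $(\widetilde\delta\mathcal L)^{c'}$. This needs the Deuring--Heilbronn-improved form of Proposition~\ref{prop:gallagher}(2), with $H$ tied to $\widetilde\delta$, and the $\mathfrak S(N)$ weighting has to survive. I would first try to quote \cite[Thm~2]{P2023}, which treats the exceptional case, rather than re-deriving these bounds.
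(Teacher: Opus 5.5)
Your proposal is correct and follows essentially the paper's route. You bound the minor arcs via Lemma~\ref{lem:minor-arc-meansq} and Chebyshev, using $\vartheta\ge 2/5$. On the major arcs you take Pintz's explicit formula with $H=H_1$, so that Deuring--Heilbronn reduces $\mathcal E$ to $(\widetilde\beta,\widetilde\chi)$, bound the mixed pairs by \eqref{eq:ss-bounds}, and read off the secondary term from the diagonal pair.

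The differences are bookkeeping only. The paper quotes the refined \cite[Thm~1]{P2023}, valid for any cutoff $H$, in which the pole $\varrho_0=1$ is adjoined to the zeros, so the pole--zero pairs and the $\mathfrak S(N)$-weighted error come out of one formula; you derive the mixed terms separately. Your worry about ``recovering'' $\mathfrak S(N)$ is moot: since $\mathfrak S(N)\gg 1$ for even $N$, the stated error with the factor $\mathfrak S(N)$ is the weaker bound, exactly as your fallback observes.
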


\begin{proof}[Sketch]
The starting point is Pintz's explicit formula for the major arcs
\cite[Thm~1]{P2023}, which refines the version recorded in
Theorem~\ref{thm:pintz-explicit} in two ways that matter here. First, the
pole $\varrho_0=1$ is included with the zeros, with signs $A(\varrho_0)=+1$
and $A(\varrho)=-1$ for genuine  zeros \cite[(2.8)]{P2023}, so that the
pole--zero pairs, which were dropped as mixed terms below \eqref{eq:EM-def},
are kept. Secondly, the density-estimate error carries the factor
$\mathfrak{S}(N)$ (it is assembled in \cite[\S8--\S9, cf.\ (9.16)]{P2023}).
Up to the contribution of prime powers and of terms with
$n_i\le X^{1-\varepsilon_0}$, which is $O(X^{1-\varepsilon_0/2})$, the
formula reads, for any cutoff $H$,
\begin{align*}
    r_{\mathfrak M}(N)
    =\sum_{(\varrho_i,\chi_i),\,(\varrho_j,\chi_j)}
    A(\varrho_i)A(\varrho_j)\,\mathfrak{S}(\chi_i,\chi_j,N)\,
    I(\varrho_i,\varrho_j,N)
    +O\!\bigl(\mathfrak{S}(N)Xe^{-cH}\bigr)+O\!\bigl(X^{1-\varepsilon_0}\bigr),
\end{align*}
where $c=c(\vartheta)>0$, the sum runs over the pairs from
$\mathcal{E}\cup\{(\varrho_0,\chi_0^{(1)})\}$ and, by
\cite[Lem.~4.9]{P2023},
\begin{align*}
    I(\varrho_1,\varrho_2,N)
    :=\sum_{\substack{k+\ell=N\\ k,\ell>X^{1-\varepsilon_0}}}
    k^{\varrho_1-1}\ell^{\varrho_2-1}
    =B(\varrho_1,\varrho_2)\,N^{\varrho_1+\varrho_2-1}
    +O\bigl(X^{1-\varepsilon_0}\bigr).
\end{align*}
By \eqref{eq:H-range} the sum runs over the four pairs formed from
$\varrho_0$ and $\varrho_1:=\widetilde\beta$ only. The pair
$(\varrho_0,\varrho_0)$ gives the main term
$\mathfrak{S}(N)N+O(\mathfrak{S}(N)X^{1-\varepsilon_0})$, the two mixed
pairs contribute $O(\mathfrak{S}(N)N\widetilde r/\varphi(\widetilde r)^{2})$
by the first bound in \eqref{eq:ss-bounds}, and $(\varrho_1,\varrho_1)$
gives the secondary main term in \eqref{eq:EZ-Goldbach}. Choosing the
largest admissible cutoff $H=H_1$ in \eqref{eq:H-range} turns the error
$O(\mathfrak{S}(N)Xe^{-cH})$ into
$O(\mathfrak{S}(N)X(\widetilde{\delta}\mathcal L)^{c'})$ with $c'$ depending on $\vartheta$.

It remains to add the minor arcs,
$r_2(N)=r_{\mathfrak M}(N)+r_{\mathfrak m}(N)$ by
\eqref{eq:splitmajorminor}. By Lemma~\ref{lem:minor-arc-meansq}, or
\cite[(5.3)]{P2023}, we have $|r_{\mathfrak m}(N)|\le X^{1-\varepsilon_0}$
(after decreasing $\varepsilon_0$ if necessary) for all even $N\le X$ with
at most $O_\varepsilon(X^{1+\varepsilon}/R+X^{3/5+\varepsilon})$ exceptions,
and the first term is dominated by the second precisely when
$\vartheta\ge2/5$. The exponent $3/5$, and with it the restriction
$\vartheta\ge 2/5$, is forced by Vinogradov's minor-arc bound
(Proposition~\ref{prop:vino}).
\end{proof}

We are now ready to state and prove the proposed new result connecting Goldbach representations with the (non-)existence of Siegel--zeros.

\begin{theorem}\label{thm:sparse-HL-no-Siegel}
Fix $A>5/2$ and $\delta\in(0,1)$. There are $c=c(\delta)>0$ and
$r_0=r_0(A,\delta)$ such that the following holds for every
$\widetilde r\ge r_0$. Let $X=\widetilde{r}^A$ and assume that
\begin{equation}\label{eq:sparse-HL}
    \delta\,\mathfrak{S}(N)N\ \le\ r_2(N)\ \le\ (2-\delta)\,\mathfrak{S}(N)N
\end{equation}
holds for all but at most $X^{3/5}$ even $N\in[X/2,X]$ with
$\widetilde r\mid N$, then no $L(s,\widetilde\chi)$ with $\widetilde\chi$ a
primitive real character modulo $\widetilde r$ has a real zero
$\widetilde\beta>1-c/\log X$.
\end{theorem}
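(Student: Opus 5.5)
We argue by contradiction, using Proposition~\ref{prop:EZ-Goldbach} on the multiples of $\widetilde r$. Suppose $\widetilde\chi$ is primitive real modulo $\widetilde r$ and $L(s,\widetilde\chi)$ has a real zero $\widetilde\beta=1-\widetilde\delta$ with $\widetilde\delta<c/\log X$. Here $c\le h(\vartheta)$ is to be chosen small in terms of $\delta$.

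\emph{Choice of $\vartheta$.} Since $X=\widetilde r^{A}$ with $A>5/2$, we have $\widetilde r=X^{1/A}$ with $1/A<2/5$. So we fix any $\vartheta\in[2/5,4/9)$, say $\vartheta=2/5$. Then $\widetilde r\le R=X^\vartheta$ and Proposition~\ref{prop:EZ-Goldbach} applies. It gives the expansion \eqref{eq:EZ-Goldbach} for all even $N\in[X/2,X]$ outside a set of size $O_\varepsilon(X^{3/5+\varepsilon})$. Together with the at most $X^{3/5}$ exceptions to \eqref{eq:sparse-HL}, the bad set has size $O(X^{3/5+\varepsilon})$.

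\emph{Counting the good multiples.} The number of even multiples of $\widetilde r$ in $[X/2,X]$ is $\asymp X/\widetilde r=X^{1-1/A}$. Because $1-1/A>3/5$ exactly when $A>5/2$, choosing $\varepsilon<(1-1/A)-3/5$ leaves some (indeed many) even $N\in[X/2,X]$ with $\widetilde r\mid N$ at which both \eqref{eq:sparse-HL} and \eqref{eq:EZ-Goldbach} hold. This is where $A>5/2$ is used, and $r_0$ is taken large enough that these counts dominate.

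\emph{Evaluating at such an $N$.} By the second identity in \eqref{eq:ss-bounds}, $\mathfrak S(\widetilde\chi,\widetilde\chi,N)=\widetilde\chi(-1)\mathfrak S(N)$. We also have $N^{2\widetilde\beta-1}=N\,e^{-2\widetilde\delta\log N}=N(1+O(c))$ and $B(\widetilde\beta,\widetilde\beta)=1+O(\widetilde\delta)$. The error terms of \eqref{eq:EZ-Goldbach} are bounded using three facts: $(\widetilde\delta\mathcal L)^{c'}\le c^{c'}$; $\widetilde r/\varphi(\widetilde r)^2\ll \widetilde r^{-1}\log\log\widetilde r\to0$ as $\widetilde r\ge r_0$ grows; and $X^{1-\varepsilon_0}=o(\mathfrak S(N)N)$, since $\mathfrak S(N)\gg1$. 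Hence
\[
r_2(N)=\bigl(1+\widetilde\chi(-1)\bigr)\mathfrak S(N)N+O\bigl((c+c^{c'}+o(1))\,\mathfrak S(N)N\bigr).
\]
If $\widetilde\chi(-1)=1$, this gives $r_2(N)\ge(2-\delta/2)\mathfrak S(N)N$, contradicting the upper bound in \eqref{eq:sparse-HL}. If $\widetilde\chi(-1)=-1$, it gives $r_2(N)\le(\delta/2)\mathfrak S(N)N$, contradicting the lower bound. Both cases require $c$ chosen small in terms of $\delta$ (and the fixed $\vartheta$) so that $C(c+c^{c'})<\delta/4$, with $r_0$ absorbing the $o(1)$ terms.

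\emph{Main obstacle.} The delicate point is the uniformity of the error terms. We need $O(\mathfrak S(N)X(\widetilde\delta\mathcal L)^{c'})$ to be genuinely small relative to $\delta\mathfrak S(N)N$, which forces the dependence $c=c(\delta)$. We also need the hypothesis $\widetilde\delta<h/\mathcal L$ of Proposition~\ref{prop:EZ-Goldbach} to be met by taking $c\le h$. Finally, we must make sure the exceptional set of \eqref{eq:EZ-Goldbach}, of size $X^{3/5+\varepsilon}$ and not restricted to multiples of $\widetilde r$, cannot swallow all multiples of $\widetilde r$. The counting above handles this last point, and it is exactly why $A>5/2$ is needed.
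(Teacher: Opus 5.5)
Your proof is correct and follows the paper's argument step for step: fix $\vartheta=2/5$ so that $\widetilde r<R$, count the $\asymp X^{1-1/A}$ even multiples of $\widetilde r$ against the $O(X^{3/5+\varepsilon})$ exceptions (this is where $A>5/2$ enters), and then use $\mathfrak S(\widetilde\chi,\widetilde\chi,N)=\widetilde\chi(-1)\mathfrak S(N)$ to violate one side of the sparse bound depending on the sign of $\widetilde\chi(-1)$. One cosmetic slip: the correct bound is $\widetilde r/\varphi(\widetilde r)^2\ll(\log\log\widetilde r)^2/\widetilde r$, with the square, which still tends to $0$ as required.
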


\begin{proof}
Take $\vartheta=2/5$, so that $\widetilde r=X^{1/A}<X^{2/5}=R$ because
$A>5/2$, and suppose that some primitive real character
$\widetilde\chi\pmod{\widetilde r}$ has a real zero
$\widetilde\beta=1-\delta_1$ with $\delta_1\mathcal L\le c$. For $c<h$
Proposition~\ref{prop:EZ-Goldbach} applies, and by $X\le 2N$ its three
error terms are in total
\begin{align*}
    \ll\ \mathfrak{S}(N)\,N\Bigl(c^{\,c'}
    +\frac{\widetilde r}{\varphi(\widetilde r)^{2}}\Bigr)+X^{1-\varepsilon_0},
\end{align*}
which is at most $\tfrac{\delta}{4}\,\mathfrak{S}(N)N$ for all even
$N\in[X/2,X]$, once $c=c(\delta)$ is small enough and
$\widetilde r\ge r_0(A,\delta)$. Moreover,
$B(\widetilde\beta,\widetilde\beta)N^{-2\delta_1}=1+O(c)$ on this range.

Restrict now to the even multiples of $\widetilde r$ in $[X/2,X]$, of which
there are $\asymp X^{1-1/A}$. Since $1-1/A>3/5$, applying
Proposition~\ref{prop:EZ-Goldbach} with
$\varepsilon=\tfrac12(1-1/A-3/5)$ and discarding also the at most $X^{3/5}$
integers excluded in \eqref{eq:sparse-HL} removes only
$O_A(X^{3/5+\varepsilon})=o(X^{1-1/A})$ of them, so at least one multiple
$N$ remains for $\widetilde r\ge r_0(A,\delta)$. For this $N$ we have
$\widetilde r\mid N$, so \eqref{eq:EZ-Goldbach} and the identity
$\mathfrak{S}(\widetilde\chi,\widetilde\chi,N)
=\widetilde\chi(-1)\mathfrak{S}(N)$ from \eqref{eq:ss-bounds} give
\[
    r_2(N)=\Bigl(1+\widetilde\chi(-1)\bigl(1+O(c)\bigr)\Bigr)\mathfrak{S}(N)N
    +E(N),
    \qquad |E(N)|\le\frac{\delta}{4}\,\mathfrak{S}(N)N.
\]
Once $c=c(\delta)$ is small enough, this yields
$r_2(N)<\delta\,\mathfrak{S}(N)N$ when $\widetilde\chi(-1)=-1$, violating
the lower bound in \eqref{eq:sparse-HL}, and
$r_2(N)>(2-\delta)\,\mathfrak{S}(N)N$ when $\widetilde\chi(-1)=+1$,
violating the upper bound. Hence every real zero satisfies
$\delta_1\mathcal L>c$, which is the claim.
\end{proof}

\section*{Acknowledgements}

La première auteure remercie le CDP C2EMPI pour son soutien, ainsi que l'\'Etat Francais
dans le cadre du programme France-2030,
l'Universit\'e de Lille, l'Initiative d'Excellence de l'Université de Lille, la Métropole
Européenne de Lille pour leur financement et leur appui au projet 
R-CDP-24-004-C2EMPI. This project has
received funding from the European Research Council (ERC) under the European Union's Horizon research and innovation programme (grant No. 101162746 second author). 

\bibliographystyle{plain}
\bibliography{references}
\end{document}